\newtheorem{thm}{Theorem}[section]
\newtheorem{prop}[thm]{Proposition}
\newtheorem{lem}[thm]{Lemma}
\theoremstyle{definition}
\newtheorem{dfn}[thm]{Definition}
\newtheorem{ex}[thm]{Example}
\theoremstyle{remark}
\newtheorem{rem}[thm]{Remark}
\newcommand{\RR}{\mathbb R}
\newcommand{\ZZ}{\mathbb Z}
\newcommand{\CC}{\mathbb C}
\newcommand{\HH}{\mathbb H}
\newcommand{\rk}{\mathrm{rank}\, }
\newcommand{\Ad}{\operatorname{Ad}}
\newcommand{\Sp}{{\mathrm{Sp}}}
\newcommand{\U}{{\mathrm{U}}}
\newcommand{\SU}{{\mathrm{SU}}}
\newcommand{\SO}{{\mathrm{SO}}}
\newcommand{\mfk}{\mathfrak{k}}
\newcommand{\mfg}{\mathfrak{g}}
\newcommand{\mfh}{\mathfrak{h}}
\newcommand{\mft}{\mathfrak{t}}
\newcommand{\mfs}{\mathfrak{s}}
\newcommand{\mfu}{\mathfrak{u}}
\newcommand{\Spin}{{\mathrm{Spin}}}
\newcommand{\depth}{\operatorname{depth}}
\newcommand{\Mmax}{M_{\max}}
\begin{document}
\address[O.~Goertsches]{Fachbereich Mathematik\\  Universit\"at Hamburg\\ Bundesstra\ss e 55 \\ 20146 Hamburg\\  Germany}
\email[]{oliver.goertsches@math.uni-hamburg.de}
\address[A.-L.~Mare]{Department of Mathematics and Statistics\\ University of Regina\\ Regina, Saskatchewan \\Canada S4S 0A2}
\email[]{mareal@math.uregina.ca}

\title{Non-abelian GKM Theory}

\author{Oliver Goertsches}
\author{Augustin-Liviu Mare}

\begin{abstract} We describe a generalization of GKM theory for actions of arbitrary compact connected Lie groups. To an action satisfying the non-abelian GKM conditions we attach a graph encoding the structure of the non-abelian 1-skeleton, i.e., the subspace of points with isotopy rank at most one less than the rank of the acting group. We show that the algebra structure of the equivariant cohomology can be read off from this graph. In comparison with ordinary abelian GKM theory, there are some special features due to the more complicated structure of the non-abelian 1-skeleton.

\vspace{0.5cm}

\noindent {\it 2010 Mathematics Subject Classification:} 57S15, 55N91, 57R91
\end{abstract}
\maketitle

\section{Introduction}

Given an equivariantly formal action of a compact torus $T = (S^1)^r$ on a compact connected manifold $M$, the by now classical Chang-Skjelbred Lemma \cite[Lemma 2.3]{CS} states that the equivariant cohomology $H^*_T(M)$ of the action (with real coefficients) is, as an algebra over the ring $H^*(BT) = S(\mft^*)$, determined by its 1-skeleton, i.e., the subspace $M_{r-1,T}$ of points in $M$ whose isotropy group has codimension at most one in $T$. Important progress was  made by Goresky, Kottwitz, and MacPherson,
who, originally in the context of algebraic actions of tori on complex projective
varieties, found an explicit combinatorial description of $H^*_T(M)$ in terms of the $T$-action on $M_{r-1,T}$, see \cite[Theorem 7.2]{GKM}.
In fact, their result holds for any   $T$-action on $M$, under
the additional assumptions that the fixed point set of the $T$-action is finite and   $M_{r-1,T}$ is a union of $2$-spheres. (In the case of an orientable $M$, the last condition is equivalent to the pairwise independence of the weights of the isotropy representations at the fixed points). The relevant data can be encoded in a graph with vertices corresponding to the fixed points and edges corresponding to the $2$-spheres in $M_{r-1,T}$.

This approach to equivariant cohomology, nowadays referred to as GKM theory, has received much attention in the last years. For example, the interplay between the
combinatorics of the GKM graph and the geometry of $M$ has been investigated
in the case when $M$ is symplectic and $T$ acts in Hamiltonian fashion,
see, e.g., \cite{Gu-Za1}, \cite{Gu-Za2}, and also when
$M$ is a quotient $G/H$ of two compact connected Lie groups $G$ and $H$ of
equal rank and $T$  is a maximal torus in $H$, see \cite{GHZ}.     
 Various versions and generalizations of GKM theory were proposed: 
 Guillemin and Holm \cite{GH} allowed the group action to have  non-isolated 
fixed points (in the setting of Hamiltonian torus actions on symplectic manifolds). In \cite{GNT}, the first named author, together with Nozawa and T\"oben, considered GKM theory for Cohen-Macaulay torus actions, which are similar to equivariantly formal actions, but for which the fixed point set is replaced by the orbits of lowest dimension. 
Actions of arbitrary, possibly non-abelian, topological groups on certain stratified
spaces have been considered by Harada, Henriques, and Holm in 
\cite{Ha-He-Ho}. 

Our aim is to present a generalization of the classical GKM theory to the action of a non-abelian compact connected Lie group $G$ on a compact connected manifold $M$.  As already motivated by the Borel localization theorem, see, e.g., \cite[Theorem C.70]{GGK}, the natural replacement for the fixed point set in this context is the space $\Mmax$ of points whose isotropy group has rank equal to the rank of $G$. In Section \ref{section:one} we will show that the natural replacement of $M_{r-1,T}$ is the space $M_{r-1}$ of points whose isotropy rank is at most one smaller than the rank of $G$, in the sense that there is a non-abelian version of the Chang-Skjelbred Lemma involving this space, see Theorem \ref{thm:CS}. 

Having thus laid the theoretical foundation for our theory, we will, after reviewing (a slight generalization of) the classical GKM theory in Section \ref{abelian}, develop non-abelian GKM theory in Sections \ref{sec:non-abeliansk}-\ref{sec:non-abelianGKMgraphs}, under the so-called \emph{non-abelian GKM conditions}:
\begin{enumerate}
\item[(i)] The $G$-action is equivariantly formal.
\item[(ii)] The space $\Mmax$ consists of finitely many $G$-orbits.
\item[(iii)] For every $p\in \Mmax$, the weights of the isotropy representation of $G_p$ on $T_pM$ are pairwise linearly independent.
\end{enumerate}
We remark that these conditions are equivalent to the ordinary (abelian) GKM conditions for the induced action of a maximal torus of $G$.

Our main task will be to understand the structure of the space $M_{r-1}$. In the torus case, the GKM conditions imply that $M_{r-1,T}$ is always the union of two-dimensional manifolds which connect the fixed points, and on which the torus acts with one-dimensional maximal orbits (i.e., very simple cohomogeneity-one manifolds). Abstractly, these can be defined as the closures of the components of $M_{r-1,T}\setminus M^T$, where $M^T$ is the fixed point set. In the non-abelian setting, we will still consider the closures $N$ of the components of $M_{r-1}\setminus \Mmax$. Although these subspaces of $M$ are not necessarily smooth, see Remark \ref{rem:compnotsmooth}, the group $G$ acts on them with cohomogeneity-one in the sense that their $G$-orbit space is homeomorphic to either a closed interval or a circle. Any of the spaces $N$ above is of one of the following three types:
\begin{enumerate}
\item[(I)] The case which is most similar to that of a $2$-sphere in the 1-skeleton of a torus action is that of a space $N$ whose $G$-orbit space is a closed interval, with both endpoints of the interval corresponding to orbits in $\Mmax$.
\item[(II)] It can happen that the $G$-orbit space of $N$ is an interval, but only one of the endpoints corresponds to an orbit in $\Mmax$. In the torus case this phenomenon occurs only if one allows nonorientable manifolds; then, there can appear real projective two-planes in $M_{r-1,T}$, see Section \ref{abelian}. In the non-abelian setting this phenomenon is not connected to orientability, see Example \ref{u2}.
\item[(III)] The case when the $G$-orbit space of $N$ is a circle can only appear if $N$ is not smooth, see Example \ref{typec}. This case has no analogue for torus actions on manifolds.
\end{enumerate}
The GKM graph associated to a torus action contains a vertex for each fixed point and an edge for each two-sphere in the 1-skeleton connecting the two vertices which correspond to the two fixed points in the sphere. By analogy, the vertices of the nonabelian GKM graph to be constructed in Section \ref{sec:non-abelianGKMgraphs} should be related to the orbits of maximal rank, and the edges to the spaces $N$ above. However, this simple analogy is not sufficient: for example, we see from case (II) above that we need a second type of vertices representing those endpoints which correspond to orbits not contained in $\Mmax$. For the detailed construction of the non-abelian GKM graph, we refer to Section \ref{sec:non-abelianGKMgraphs}.

A final comment concerns the effectiveness of our method for
computing the $S(\mfg^*)^G$-algebra $H^*_G(M)$.  This can also be done by
 using the well-known formula
$$H^*_G(M) = H^*_T(M)^{W(G)},$$
where $T\subset G$ is a maximal torus and $W(G)$ the corresponding Weyl group.
As already pointed out, in our context the algebra $H^*_T(M)$ can be expressed by means of the classical GKM theory.  Since the action of $W(G)$ on this algebra can also be made  precise, the formula above leads to  concrete descriptions of $H^*_G(M)$.
Although such a description has in general a more complicated  appearance than the one which results from our approach,  it can, in principle,  be reduced to that one.
However, a considerable amount of work  is sometimes  needed to get this reduction. For a discussion of concrete examples we refer to Remark 
\ref{simpler}. Perhaps even more important than the simplicity of our presentations of the equivariant cohomology algebra is their ``naturality" in the sense that they reflect closely the geometry of the action. In other words, our main concern here is not
of combinatorial nature, but rather of geometric nature.

\section{A non-abelian Chang-Skjelbred Lemma}\label{section:one}
Let $G$ be a compact connected Lie group of rank $r$ acting on a compact connected manifold $M$. For each $i=0,\ldots, r$ we define $M_{i}:=\{p\in M\mid \rk G_p \geq i\}$. We also write $\Mmax:=M_{r}$.

Consider first the case when the Lie group $G$ is Abelian, i.e., a torus $T$ of rank $r$. Then, we have $\Mmax = M^T$, the fixed point set of the action. Assuming that the action is equivariantly formal, it is well-known that the restriction map $H^*_T(M)\to H^*_T(M^T)$ is injective. The Chang-Skjelbred Lemma \cite[Lemma 2.3]{CS} then states that its image is the same as the image of $H^*_T(M_{r-1})\to H^*_T(M^T)$. Put together, we have a short exact sequence
\begin{equation}\label{eq:changskjelbred}
0 \longrightarrow H^*_T(M) \longrightarrow H^*_T(M^T) \longrightarrow H^{*+1}_T(M_{r-1},M^T).
\end{equation}
In this section we derive a non-abelian version of this short exact sequence, see Theorem \ref{thm:CS} below. 

From now on, $G$ is again an arbitrary compact connected Lie group of rank $r$. 
We need some considerations concerning the (Krull) dimension of
certain ($G$-equivariant cohmology) $H^*(BG)$-modules, which  will be simply denoted by $\dim$. 
The following result is a slight generalization of \cite[Lemma 4.4]{FranzPuppe2003}.
It can be proven with exactly the same arguments as that result 
(the only new ingredient needed in the proof is the general result saying that
if $K$ is a compact, possibly non-connected, closed subgroup of $G$, then the
dimension of $H^*(BK)$ over $H^*(BG)$ is equal to the rank of $K$,
see, e.g.,  \cite[Proposition 2.6]{GM}).

\begin{lem}\label{lem:dimension} $\dim H^*_G(M,M_{i})\leq i-1$.
\end{lem}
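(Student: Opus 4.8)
The plan is to induct on $i$, mimicking the argument of \cite[Lemma 4.4]{FranzPuppe2003}, and to feed into that argument the stated dimension bound $\dim H^*(BK) = \rk K$ for compact (possibly disconnected) subgroups $K \subseteq G$. The base case $i = 0$ reads $\dim H^*_G(M, M_0) \leq -1$, i.e.\ $H^*_G(M, M_0) = 0$, which holds because $M_0 = M$ (every isotropy group has rank $\geq 0$), so the pair is trivial. For the inductive step, suppose the bound is known for $i-1$ and consider the triple $(M, M_{i-1}, M_i)$, which yields a long exact sequence in equivariant cohomology relating $H^*_G(M, M_i)$, $H^*_G(M, M_{i-1})$, and $H^*_G(M_{i-1}, M_i)$. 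Since Krull dimension of a module in a short exact sequence is bounded by the max of the dimensions of the outer terms, it suffices to bound $\dim H^*_G(M, M_{i-1})$ — which is $\leq i-2 \leq i-1$ by induction — and $\dim H^*_G(M_{i-1}, M_i)$, which I claim is $\leq i-1$.

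For the latter, I would stratify $M_{i-1} \setminus M_i$ — the set of points whose isotropy group has rank exactly $i-1$ — by orbit type. On each such stratum the isotropy groups are conjugate to a fixed compact subgroup $K$ with $\rk K = i-1$, and a standard localization/Leray–Hirsch type argument (or the Borel-construction fibration over the orbit space of the stratum, whose fibre is $G/K$) shows that the equivariant cohomology of that piece is a module over $H^*(BG)$ whose dimension is controlled by $\dim_{H^*(BG)} H^*(BK) = \rk K = i-1$. Assembling the strata — using a Mayer–Vietoris or spectral-sequence argument for the filtration of $M_{i-1}\setminus M_i$ by closed unions of strata, and the fact that Krull dimension is subadditive across the pieces of such a decomposition — gives $\dim H^*_G(M_{i-1}, M_i) \leq i-1$, and the inductive step closes.

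I expect the main obstacle to be the bookkeeping in this stratification step: making precise that the equivariant cohomology of a single orbit-type stratum really has Krull dimension at most $\rk K$ over $H^*(BG)$ (this uses that the stratum fibers over a finite-dimensional base with all relevant isotropy of rank $i-1$, together with the disconnected-subgroup dimension input from \cite[Proposition 2.6]{GM}), and then that these bounds survive assembly into $H^*_G(M_{i-1}, M_i)$. Since the excerpt explicitly says the proof proceeds ``with exactly the same arguments'' as \cite[Lemma 4.4]{FranzPuppe2003} with only the disconnected-group dimension statement as new input, I would simply transcribe that argument, replacing ``connected'' by ``compact'' wherever the rank-of-isotropy bound is invoked, and verify that no step used connectedness of the isotropy groups in an essential way beyond the dimension count.
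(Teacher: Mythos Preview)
Your proposal is correct and matches the paper's approach exactly: the paper does not give a detailed proof either, but simply points to \cite[Lemma 4.4]{FranzPuppe2003} and notes that the only new input is the dimension formula $\dim_{H^*(BG)} H^*(BK) = \rk K$ for possibly disconnected compact $K\subset G$ from \cite[Proposition 2.6]{GM}. Your inductive scheme via the long exact sequence of the triple $(M,M_{i-1},M_i)$ together with an orbit-type stratification of $M_{i-1}\setminus M_i$ is precisely the Franz--Puppe argument, with the disconnected-isotropy dimension bound inserted at the right place.
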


We are now in a position to state and prove the main result of this section.

\begin{thm}\label{thm:CS} If an action of a compact connected Lie group $G$ on a compact connected manifold $M$ is equivariantly formal, then 
\[
0\longrightarrow H^*_G(M)\longrightarrow H^*_G(\Mmax)\longrightarrow H^{*+1}_G(M_{r-1},\Mmax)
\]
is exact.
\end{thm}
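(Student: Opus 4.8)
The plan is to mimic the standard proof of the abelian Chang--Skjelbred lemma, replacing the fixed point set $M^T$ by $\Mmax$ and the $1$-skeleton $M_{r-1,T}$ by $M_{r-1}$, and using Lemma \ref{lem:dimension} as the substitute for the localization-theorem vanishing that drives the torus case. I would run the whole argument with real coefficients and treat all cohomology groups as finitely generated graded modules over the polynomial ring $R:=H^*(BG)=S(\mfg^*)^G$, which is Noetherian and Cohen--Macaulay; the key numerical input will be the Krull dimensions of the various relative terms.

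\medskip

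\noindent\textbf{Step 1: Exactness at $H^*_G(M)$, i.e. injectivity of $H^*_G(M)\to H^*_G(\Mmax)$.} Consider the long exact sequence of the pair $(M,\Mmax)$,
\[
\cdots \longrightarrow H^*_G(M,\Mmax)\longrightarrow H^*_G(M)\longrightarrow H^*_G(\Mmax)\longrightarrow H^{*+1}_G(M,\Mmax)\longrightarrow\cdots .
\]
By Lemma \ref{lem:dimension} with $i=r$ we have $\dim H^*_G(M,\Mmax)\le r-1$. On the other hand, equivariant formality means $H^*_G(M)$ is a free $R$-module, hence $\dim H^*_G(M)=\dim R=r$ (or $M=\emptyset$). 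Since the image of $H^*_G(M,\Mmax)\to H^*_G(M)$ is a submodule of $H^*_G(M)$ that is also a quotient of $H^*_G(M,\Mmax)$, its dimension is at most $r-1$; but a nonzero submodule of a free $R$-module has dimension $r$. Therefore this image is zero, which by exactness of the pair sequence forces $H^*_G(M)\to H^*_G(\Mmax)$ to be injective.

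\medskip

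\noindent\textbf{Step 2: Exactness at $H^*_G(\Mmax)$.} Here I would use the commuting ladder built from the long exact sequences of the pairs $(M,\Mmax)$ and $(M_{r-1},\Mmax)$, linked by the maps $H^*_G(M,\Mmax)\to H^*_G(M_{r-1},\Mmax)$ and $H^*_G(M,M_{r-1})\to$ the connecting map. Concretely, the image of $H^*_G(M)\to H^*_G(\Mmax)$ is the kernel of $H^*_G(\Mmax)\to H^*_G(M,\Mmax)$ (by Step 1 together with exactness of the pair sequence for $(M,\Mmax)$), so it suffices to show that $\ker\big(H^*_G(\Mmax)\to H^*_G(M,\Mmax)\big)=\ker\big(H^*_G(\Mmax)\to H^*_G(M_{r-1},\Mmax)\big)$, equivalently that the natural map $H^*_G(M,\Mmax)\to H^*_G(M_{r-1},\Mmax)$ is injective. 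Feed this into the long exact sequence of the triple $(M,M_{r-1},\Mmax)$:
\[
\cdots\longrightarrow H^*_G(M,M_{r-1})\longrightarrow H^*_G(M,\Mmax)\longrightarrow H^*_G(M_{r-1},\Mmax)\longrightarrow\cdots ,
\]
so injectivity of the middle map follows once we know that the image of $H^*_G(M,M_{r-1})\to H^*_G(M,\Mmax)$ is zero. Now apply Lemma \ref{lem:dimension} with $i=r-1$: $\dim H^*_G(M,M_{r-1})\le r-2$, hence the submodule $\mathrm{im}\big(H^*_G(M,M_{r-1})\to H^*_G(M,\Mmax)\big)\subset H^*_G(M,\Mmax)$ has Krull dimension $\le r-2$. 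To conclude it is zero I need that $H^*_G(M,\Mmax)$ has no nonzero submodule of dimension $\le r-2$, i.e. that $H^*_G(M,\Mmax)$ has pure dimension $r-1$ (no embedded/low-dimensional associated primes). This purity is exactly the point where the argument of \cite[Lemma 2.3]{CS}, \cite[Lemma 4.4]{FranzPuppe2003} is invoked: equivariant formality forces $H^*_G(M,\Mmax)$ to be Cohen--Macaulay of dimension $r-1$ (from the free resolution given by the pair sequence with $H^*_G(M)$ free and $H^*_G(\Mmax)$ free), and a Cohen--Macaulay module is unmixed, so every associated prime has dimension $r-1>r-2$; hence the dimension-$\le r-2$ submodule vanishes. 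Then the triple sequence gives the desired injectivity, Step 1 gives the other half, and stringing the identifications together yields exactness of
\[
0\longrightarrow H^*_G(M)\longrightarrow H^*_G(\Mmax)\longrightarrow H^{*+1}_G(M_{r-1},\Mmax).
\]

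\medskip

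\noindent\textbf{Main obstacle.} The routine part is the diagram chase through the pair and triple sequences; the substantive point is establishing the purity (Cohen--Macaulayness of dimension $r-1$) of $H^*_G(M,\Mmax)$ so that a submodule of strictly smaller dimension must vanish — in the abelian case this is where equivariant formality of the $T$-action really gets used, and in the non-abelian case one has to check that the same homological argument goes through over $R=H^*(BG)$, using that $R$ is a Cohen--Macaulay (indeed polynomial) ring and that both $H^*_G(M)$ and $H^*_G(\Mmax)$ are free $R$-modules. The latter requires knowing that equivariant formality of the $G$-action implies equivariant formality of $\Mmax$ (so $H^*_G(\Mmax)$ is $R$-free), which should follow because $\Mmax$ is, up to equivariant homotopy and the Borel construction, governed by a maximal-torus-fixed-point situation, or more directly from the fact — implicit in the discussion preceding Lemma \ref{lem:dimension} — that each orbit in $\Mmax$ contributes a free summand. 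Once freeness of the two outer terms is in hand, the dimension bookkeeping from Lemma \ref{lem:dimension} closes the argument exactly as in the abelian case.
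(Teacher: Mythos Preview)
Your proof is correct and follows essentially the same route as the paper: the paper cites \cite[Proposition 4.8]{GR} for the injectivity in Step~1 (your direct argument via Lemma~\ref{lem:dimension} and freeness is fine), and for Step~2 it runs exactly your triple-sequence argument, deriving $\depth H^*_G(M,\Mmax)\geq r-1$ from the short exact sequence (implicitly using freeness of $H^*_G(\Mmax)$, established in \cite[Corollary 4.5]{GR}) and then invoking \cite[Lemma 4.3]{FranzPuppe2003}---which is precisely your ``unmixedness of Cohen--Macaulay modules'' observation---to kill the image of $H^*_G(M,M_{r-1})$. The only notational slip is that your connecting maps $H^*_G(\Mmax)\to H^*_G(M,\Mmax)$ etc.\ should carry a degree shift to $H^{*+1}$, but this does not affect the argument.
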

\begin{proof} The injectivity of $H^*_G(M)\to H^*_G(\Mmax)$ was proven in \cite[Proposition 4.8]{GR}. 
The exactness at $H^*_G(\Mmax)$ can be proven with the same method as
 the exactness of the Atiyah-Bredon sequence for torus actions \cite[Main Lemma]{Bredon}, see also \cite{FranzPuppe2003}. We decided to include the proof  here for the sake
 of completeness. Because 
\[
0\longrightarrow H^*_G(M)\longrightarrow H^*_G(\Mmax)\longrightarrow H^{*+1}_G(M,\Mmax)\longrightarrow 0
\]
is exact, we have $\depth H^*_G(M,\Mmax)\geq r-1$. On the other hand we have $\dim H^*_G(M,\Mmax)\leq r-1$ by Lemma \ref{lem:dimension}, so it follows that $H^*_G(M,\Mmax)$ has dimension $r-1$ (recall that, in general, depth cannot
exceed $\dim$). 

Since $\dim H^*_G(M,M_{r-1})\leq r-2$, the image of the natural map $H^*_G(M,M_{r-1})\to H^*_G(M,\Mmax)$ 
has dimension at most equal to  $r-2$, hence it is equal to 0 (since otherwise, by \cite[Lemma 4.3]{FranzPuppe2003}, it would have
dimension equal to $r-1$). It follows that 
\[
0\longrightarrow H^*_G(M,\Mmax)\longrightarrow H^*_G(M_{r-1},\Mmax)\longrightarrow H^{*+1}_G(M,M_{r-1})\longrightarrow 0
\]
is exact. Thus, an element in the kernel of $H^*_G(\Mmax)\to H^{*+1}_G(M_{r-1},\Mmax)$ must already lie in the kernel of $H^*_G(\Mmax)\to H^{*+1}_G(M,\Mmax)$ and hence is in the image of $H^*_G(M)\to H^*_G(\Mmax)$.
\end{proof}
\begin{rem} The theorem suggests that there might exist a non-abelian version of the Atiyah-Bredon sequence as in \cite{Bredon}. However this is unclear, as to copy the proof given there one needs to know that the modules $H^*_G(M_i,M_{i+1})$ are Cohen-Macaulay of dimension $i$. This is so far only proven for $i=r$: \cite[Corollary 4.5]{GR} shows that $H^*_G(M_r,M_{r+1})=H^*_G(\Mmax)$ is a free module over $H^*(BG)$.
\end{rem}

\section{Abelian GKM theory }\label{abelian}

In this section we review the theorem of Goresky, Kottwitz, and MacPherson \cite[Theorem 7.2]{GKM} (see also \cite[Section 11.8]{GuilleminSternberg}) concerning the description of the equivariant
cohomology of a torus action in terms of the 1-skeleton.  A compact, connected manifold $M$ with an action of a torus $T$ is a {\it GKM space} if:
\begin{enumerate}
\item[(i)] The $T$-action on $M$ is equivariantly formal.
\item[(ii)] The fixed point set $M^T$ is finite.
\item[(iii)] For every $p\in M^T$, the weights of the isotropy representation of $T$ on $T_pM$ are pairwise linearly
independent.
\end{enumerate}
The theory we present here is slightly more general than the classical one, in the sense that the manifold $M$ is not required to be orientable; hence the
connected components of the fixed point sets of (codimension-one) subtori of $T$ are not necessarily orientable.
Consequently, there might be two-dimensional submanifolds in the 1-skeleton that are not necessarily spheres which need to be taken into
account (see Lemma \ref{lett} and Example \ref{ex:gras} below): these new features are reflected in the resulting GKM graph by the presence of some stars, like for instance in 
Figure \ref{fig2}.

\subsection{The abelian 1-skeleton}\label{sabelian}
We set $r:=\rk T$ and consider the subspace 
$M_{r-1}=\{p\in M \mid \dim T_p \ge r-1\}$ of $M$, also
called the \emph{1-skeleton} of the action. It is the
union of the fixed point sets of finitely many codimension-one subtori of $T$. The following result, which is Theorem 11.6.1 in \cite{GuilleminSternberg}, states that any connected component of the fixed point set of a subtorus of $T$ contains a $T$-fixed point. We give another proof of this result here which does not use the notions of equivariant Thom and Euler class.
Note that the lemma is more general than needed here, i.e., it only requires the action to
be equivariantly formal. 

\begin{lem}\label{fixp} If the action of a torus $T$ on a compact connected manifold 
$M$ is
equivariantly formal and $H\subset T$ is a subtorus, then any connected component
of the fixed point set $M^H$ contains a $T$-fixed point.
\end{lem}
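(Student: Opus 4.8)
The plan is to reduce the statement to the abelian Chang--Skjelbred picture recalled in \eqref{eq:changskjelbred}, applied to a suitable subtorus. Fix a connected component $N$ of $M^H$ and suppose, for contradiction, that $N \cap M^T = \emptyset$. First I would reduce to the case $H = T$: since $N$ is a connected component of $M^H$, it is a closed $T$-invariant submanifold of $M$, and the induced action of the quotient torus $T/H$ on $N$ has the property that its fixed point set $N^{T/H}$ equals $N \cap M^T$, which we are assuming to be empty. So it suffices to show that an equivariantly formal action of a torus on a nonempty compact connected manifold has a nonempty fixed point set --- however, the subtlety is that the $T/H$-action on $N$ need not be equivariantly formal, so this naive reduction does not quite work, and one must instead argue with the original $T$-action on $M$ while localizing near $N$.

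The cleaner route, which I would actually carry out, is as follows. Assume $M^H$ has a connected component $N$ with $N \cap M^T = \emptyset$. Because $M^T$ is compact and $N$ is a connected component of the closed set $M^H$, we may choose a $T$-invariant open neighborhood $U$ of $N$ with $\overline{U} \cap M^T = \emptyset$ and $\overline{U} \cap M^H = N$ (shrink a tubular neighborhood of $N$ and average over $T$). Now consider the subtorus $H$ and the fact that every isotropy group $T_p$ for $p \in N$ contains $H$; by choosing $H' \subseteq T$ a circle subgroup in "general position" inside a complementary torus to $H$ --- more precisely, a circle $H'$ such that $H \cdot H'$ is again codimension-one subtorus is not what we want; rather pick $H'$ so that $(M^{H'})$ meets $N$ in exactly the points one controls. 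The key point is: I want to find a subtorus $T' \subseteq T$ with $(M)^{T'} \supseteq N$ but whose fixed point set is "small", so that equivariant formality forces a $T$-fixed point inside it.

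Here is the argument I would commit to. Since the $T$-action on $M$ is equivariantly formal, so is the restricted $H$-action (equivariant formality passes to subtori --- this follows from $H^*_T(M)$ being free over $H^*(BT)$, hence $H^*_H(M)$ free over $H^*(BH)$, via the fibration $BH \to BT$ argument, or directly from $\dim H^*(M) = \dim H^*(M^H)$ together with the corresponding statement for $T$ and the inclusion $M^T \subseteq M^H$). Therefore $H^*_H(M) \to H^*_H(M^H)$ is injective, and in particular $M^H \neq \emptyset$ has total Betti number equal to that of $M$. Now decompose $M^H = N \sqcup N'$ into the chosen component and the rest; the Mayer--Vietoris / localization computation shows each connected component of $M^H$ is itself equivariantly formal for the induced $T/H$-action --- \emph{this} is the step I expect to be the main obstacle, since equivariant formality of components of fixed sets of subtori is a known but nontrivial fact (it is essentially the statement that the restriction $H^*_T(M) \to H^*_T(N)$ is surjective, which for torus actions follows from the degeneration of the relevant spectral sequence, or from \cite[Lemma 2.3]{CS} applied inductively). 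Granting it, $N$ carries an equivariantly formal action of the nontrivial torus $T/H$ (nontrivial because $N \cap M^T = \emptyset$ means the $T/H$-action on $N$ is not trivial, so $T/H \neq 1$... and in fact we need $N^{T/H} = \emptyset$, contradiction). To close the loop without circularity I would instead induct on $\rk T$: for $\rk T = 1$ a nonempty equivariantly formal circle action has nonempty fixed set since $\chi(M) = \chi(M^{S^1}) \ne 0$ or, more robustly, $H^*_{S^1}(M) \to H^*_{S^1}(M^{S^1})$ is injective and $M \ne \emptyset$; for the inductive step pick a circle $S \subseteq T$, note $M^S \ne \emptyset$ is equivariantly formal for $T/S$ on each component, and any component of $M^H$ containing no $T$-fixed point would, after intersecting with an appropriate $M^S$, produce a lower-rank counterexample. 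The main obstacle throughout is justifying equivariant formality of the components of $M^S$ for the residual torus action; I would cite the Chang--Skjelbred framework and the freeness results already invoked in the proof of Theorem \ref{thm:CS} to supply it.
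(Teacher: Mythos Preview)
Your strategy is genuinely different from the paper's. The paper argues by Mayer--Vietoris and support localization: assuming a component $N$ of $M^H$ misses $M^T$, it takes a $T$-invariant tubular neighborhood $U$ of $N$ and sets $V = M \setminus N$, observes that $H^*_T(U \cap V;\CC)$ is torsion over $S(\mft^*)$ because no isotropy group along $U\cap V$ contains $H$, and uses freeness of $H^*_T(M;\CC)$ to split the Mayer--Vietoris sequence. Since $M^T\subset V$, the map $H^*_T(M;\CC)\to H^*_T(V;\CC)$ is injective, which forces $H^*_T(U;\CC)\to H^*_T(U\cap V;\CC)$ to be injective; but the support of $H^*_T(U;\CC)\cong H^*_{T/H}(N;\CC)\otimes S((\mfh\otimes\CC)^*)$ contains $\mfh\otimes\CC$ while that of $H^*_T(U\cap V;\CC)$ does not, a contradiction. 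Your second attempt was heading exactly here before you abandoned it.

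Your committed approach---show that each component $N$ of $M^H$ is equivariantly formal for the residual $T/H$-action and conclude $N^{T/H}=N\cap M^T\neq\emptyset$---is correct in principle, but the justification you give for the key step is incomplete. Citing Chang--Skjelbred risks the circularity you yourself flag, and the induction you sketch is not set up: you never say how to choose the circle $S$ (you need $S\subseteq H$, so that $N$ lies in a single component $N'$ of $M^S$ and is a component of $(N')^{H/S}$; the case $H=\{e\}$ must be handled separately). The clean non-circular route, which you nearly reach, is the Betti-number count: Borel's inequality gives $\sum b_i(M^T)\le \sum b_i(M^H)\le \sum b_i(M)$, equivariant formality of the $T$-action forces equality throughout, and since $\sum b_i(N_j)\ge \sum b_i(N_j\cap M^T)$ componentwise with equal totals, each component $N_j$ is equivariantly formal for $T/H$ and hence has a fixed point. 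This is shorter than the paper's argument but imports Borel's inequality as a black box; the paper's proof is more self-contained, using only the support description of equivariant cohomology.
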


\begin{proof} Suppose that there exists  a connected component of $M^H$, call
it $N$, which does not contain any $T$-fixed point. We isolate $N$ from the
rest of $M^H$ by a $T$-equivariant open tubular neighborhood, call it $U$. 
We set $V:=M\setminus N$, which is also a $T$-invariant open subset of $M$.
By the argument in the proof of \cite[Theorem 11.4.1]{GuilleminSternberg}, the support of $H^*_T(U\cap V;\CC)$ 
is contained in the union of the complexified Lie algebras of all isotropy groups of points in $U\cap V$.
None of these groups contain $H$, hence (because $H$ is connected) the union above cannot contain $\mfh\otimes \CC$,
where $\mfh$ is the Lie algebra of $H$. In particular, $H^*_T(U\cap V;\CC)$ is a torsion
$H^*(BT; \CC)$-module. We now consider the Mayer-Vietoris sequence of the pair
$U, V$  in equivariant cohomology. The map $H^*_T(U\cap V;\CC) \to H^{*+1}_T(M;\CC)$
which  arises in this sequence is identically zero, since it is a homomorphism 
of $H^*(BT;\CC)$-modules whose domain is a torsion module and whose range is a free module. 
 Thus the Mayer-Vietoris sequence splits into short exact sequences:
 \begin{equation}\label{eq:gsproofnew}
 0\longrightarrow H^*_T(M;\CC)\longrightarrow H^*_T(U;\CC) \oplus H^*_T(V;\CC) \to H^*_T(U\cap V; \CC) \longrightarrow 0.
 \end{equation}
As all the fixed points of the action are contained in $V$, the natural restriction map $H^*_T(M;\CC)\to H^*_T(M^T;\CC)$ (which is injective by equivariant formality, see \eqref{eq:changskjelbred}) factors through $H^*_T(V;\CC)$. Thus, $H^*_T(M;\CC)\to H^*_T(V;\CC)$ is injective, which, via the short exact sequence \eqref{eq:gsproofnew}, implies that the restriction map $H^*_T(U;\CC) \to H^*_T(U\cap V; \CC)$ is injective. 
But then the support of $H^*_T(U\cap V; \CC)$ contains the 
support of $H^*_T(U;\CC)$, which in turn contains $\mfh \otimes \CC$
(because $H^*_T(U;\CC)\simeq H^*_T(N;\CC)\simeq H^*_{T/H}(N;\CC) \otimes S((\mfh \otimes \CC)^*))$). We obtained a contradiction, which finishes the proof. 
\end{proof}  

\begin{rem} The result stated in the lemma may fail to be true in the case when
the group $H$ is not connected. Consider for example the canonical $S^1$-action on
$\RR P^2$ and the subgroup $\ZZ_2=\{-1,1\}$ of $S^1$. One of the connected components 
of $(\RR P^2)^{\ZZ_2}$ is diffeomorphic to a circle and contains no $S^1$-fixed points.   
\end{rem}

From now on we will assume that the $T$-action on $M$
satisfies the GKM conditions above.
Using Lemma \ref{fixp}, one shows \cite[Theorem 11.8.1]{GuilleminSternberg} that any positive-dimensional component of a fixed point set of a codimension-one subtorus of $T$ is two-dimensional. We will refer to these components as the \emph{components of the 1-skeleton} $M_{r-1}$. The following lemma describes these submanifolds.

\begin{lem}\label{lett}  Let $T\times N \to N$ be a nontrivial action of a torus $T$ on a compact, connected,
two-dimensional manifold $N$ with nonempty fixed point set. Then:
\begin{enumerate}
\item[(a)] The kernel $K$ of the action is of codimension one in $T$.
\item[(b)] $N$ is diffeomorphic to $S^2$ or $\RR P^2$ in such a way that the action of the circle $T/K$ corresponds to the standard action of $S^1$ on $S^2$ respectively $\RR P^2$.
\end{enumerate}
In particular, the $T$-action has two or one fixed points, depending on whether $N$ is diffeomorphic to $S^2$ or $\RR P^2$.
\end{lem}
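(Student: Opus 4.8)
The plan is to reduce the statement to the study of an effective circle action on $N$, and then to combine the local normal form of such an action near a fixed point with a count of fixed points via the Euler characteristic. Throughout, let $K$ be the kernel of the $T$-action and set $\bar T:=T/K$; then $\bar T$ acts effectively on $N$ and $N^{\bar T}=N^T\neq\emptyset$.

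For (a): since the $T$-action is nontrivial, $K$ is a proper closed subgroup of the torus $T$, so $\bar T$ is a torus of dimension at least one. Fix $p\in N^{\bar T}$ and consider the isotropy representation $\rho\colon \bar T\to\mathrm{GL}(T_pN)=\mathrm{GL}(2,\RR)$. If $\ker\rho$ were nontrivial, then any $g\ne e$ in it would fix a neighborhood of $p$ by the slice theorem, so $\Fix(g)$ would be a closed submanifold of $N$ that is two-dimensional near $p$ and hence, by connectedness, all of $N$ --- contradicting effectiveness. Thus $\rho$ is faithful, and its image is a compact connected abelian subgroup of $\mathrm{GL}(2,\RR)$, hence conjugate into $\mathrm{SO}(2)\cong S^1$; in particular $\dim\bar T\le 1$. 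Together with $\dim\bar T\ge 1$ this proves (a), and from now on we write $S^1:=\bar T$.

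For (b): the same slice-representation argument shows that the isotropy representation at \emph{every} point $q\in N^{S^1}$ is faithful, hence equivalent to $z\mapsto t^{\pm 1}z$ on $T_qN\cong\CC$; in particular $N^{S^1}$ is a finite set. Consequently the vector field $X$ on $N$ generating the $S^1$-action vanishes precisely on $N^{S^1}$, with a nondegenerate zero of index $+1$ at each such point (its linearization there is an infinitesimal rotation). By the Poincar\'e--Hopf theorem $\chi(N)=\#N^{S^1}\ge 1$, and since the only closed surfaces of positive Euler characteristic are $S^2$ (with $\chi=2$) and $\RR P^2$ (with $\chi=1$), we conclude that $N$ is diffeomorphic to $S^2$ or to $\RR P^2$, with two or one $T$-fixed points respectively --- which already yields the final ``in particular'' assertion.

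It remains to identify the action itself. For this I would examine the orbit space $N/S^1$, a compact connected $1$-manifold with boundary whose boundary points are exactly the images of the fixed points together with the images of any exceptional orbits of type $S^1/\ZZ_2$ (cores of M\"obius bands); a short slice-representation check rules out exceptional orbits $S^1/\ZZ_k$ with $k\ge 3$, so that $N\setminus N^{S^1}$ is a trivial principal $S^1$-bundle over an interval and $N$ is reassembled from it by equivariantly attaching the rotation-disk models at the fixed points (and, in the $\RR P^2$ case, the M\"obius-band model at the unique $\ZZ_2$-orbit). This forces the $S^1$-action to agree, up to equivariant diffeomorphism, with the standard one; alternatively one may invoke the classical classification of smooth effective circle actions on closed surfaces. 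I expect this last identification to be the main obstacle: the Euler-characteristic count pins down the underlying manifold cleanly, but promoting it to an \emph{equivariant} diffeomorphism with the standard model requires the orbit-space bookkeeping above (or a citation), whereas the slice-representation computations used in (a) and for the finiteness of $N^{S^1}$ are routine.
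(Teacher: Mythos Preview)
Your proof is correct, but it takes a different route from the paper's. The paper argues in one stroke: since the action is nontrivial it has a positive-dimensional orbit, and since it has a fixed point it is not transitive, so it is of cohomogeneity one; then it simply cites Mostert's classification of effective cohomogeneity-one actions on closed surfaces (only $S^2$, $\RR P^2$, and the Klein bottle occur) and rules out the Klein bottle because that action has no fixed points. Your approach is more hands-on: you prove (a) directly via faithfulness of the isotropy representation at a fixed point, and you identify the underlying surface via Poincar\'e--Hopf, which is a clean and self-contained argument the paper does not use. For the final equivariant identification you sketch an orbit-space reconstruction (or fall back on the same classification the paper cites), and you are right to flag this as the step that needs the most care; in the paper's approach this step comes for free from Mostert's result. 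In short, your method gives more insight into why (a) and the topological half of (b) hold, at the cost of a longer argument, while the paper's method is quicker but relies more heavily on an external citation.
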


\begin{proof} Because the $T$-action has fixed points, it cannot be transitive. It is also nontrivial, which implies that it is of cohomogeneity-one. The result then follows readily from the classification of cohomogeneity-one actions of compact Lie groups on two dimensional compact
manifolds, see, e.g., \cite[Section 6]{Mo}. There are only three such actions that are effective, namely
the actions of $S^1$ on $S^2$, $\RR P^2$, respectively the Klein bottle $K^2$.
The first two actions are standard. To describe the third one, we recall that $K^2$
can be written as the orbit space $(S^1 \times S^1)/\ZZ_2$, where the $\ZZ_2$-action is given by
$(z_1, z_2)\mapsto (-z_1, z_2^{-1})$. The $S^1$-action on $K^2$ 
is given by multiplication on the first factor. This action has no fixed points, so we can rule it out.
\end{proof}

\subsection{The  GKM graph}
By the short exact sequence \eqref{eq:changskjelbred}, the map $H^*_T(M)\to H^*_T(M^T)$ is injective and
its image is the same as the image of the map $H^*_T(M_{r-1})\to H^*_T(M^T)$. 
To understand this image, we first need to consider the two special situations described in Lemma \ref{lett} and 
compute the corresponding equivariant cohomology algebras:
\begin{lem}\label{lem:cohompieces}
\begin{enumerate}
\item[(a)] Assume that a torus $T$ acts on the sphere $S^2$ with unique fixed points $p$ and $q$, such that a codimension-one subgroup $K\subset T$ acts trivially. 
Then the restriction map $H^*_T(S^2)\to H^*_T(\{p\})\oplus H^*_T(\{q\}) = S(\mft^*)\oplus S(\mft^*)$ defines an isomorphism between $H^*_T(M)$ and the space of all pairs $(f, g)\in S(\mft^*)\oplus S(\mft^*)$  such that $f|_{\mfk}=g|_{\mfk}$.
Here $\mfk$ is the Lie algebra of $K$.
\item[(b)] Assume that a torus $T$ acts on the projective space $\RR P^2$ with unique fixed point $p$. Then the restriction map $H^*_T(\RR P^2)\to H^*_T(\{p\})=S(\mft^*)$ is an isomorphism.
\end{enumerate}
\end{lem}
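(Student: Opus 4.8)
The plan is to compute both equivariant cohomology algebras directly from the Mayer–Vietoris sequence associated to a standard $T$-invariant open cover, using that in each case the action is equivariantly formal (so the Leray–Hirsch/collapse property holds) and that the non-fixed orbits have isotropy group exactly $K$.

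For part (a), I would cover $S^2$ by two $T$-invariant open disks $U$ and $V$, equivariantly contractible onto the fixed points $p$ and $q$ respectively, whose intersection $U\cap V$ is $T$-equivariantly homotopy equivalent to a single free $T/K$-orbit, i.e. to $T/K$. Thus $H^*_T(U)\cong H^*_T(\{p\})=S(\mft^*)$, $H^*_T(V)\cong H^*_T(\{q\})=S(\mft^*)$, and $H^*_T(U\cap V)\cong H^*_T(T/K)\cong H^*_{T/K}(\mathrm{pt})\otimes\text{(nothing)}$ — more precisely $H^*_T(T/K)\cong H^*(BK)=S(\mfk^*)$, where the restriction $S(\mft^*)=H^*_T(U)\to H^*_T(U\cap V)=S(\mfk^*)$ is exactly the surjection $f\mapsto f|_\mfk$ induced by $\mfk\hookrightarrow\mft$. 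By equivariant formality (or directly: the connecting map $H^*_T(U\cap V)\to H^{*+1}_T(S^2)$ is a map from a module supported on $\mfk\otimes\CC$ to a free $S(\mft^*)$-module, hence zero), the Mayer–Vietoris sequence breaks into short exact sequences
\[
0\longrightarrow H^*_T(S^2)\longrightarrow S(\mft^*)\oplus S(\mft^*)\longrightarrow S(\mfk^*)\longrightarrow 0,
\]
where the second map is $(f,g)\mapsto f|_\mfk - g|_\mfk$. Hence $H^*_T(S^2)$ is isomorphic, via the restriction map, to the kernel $\{(f,g)\mid f|_\mfk=g|_\mfk\}$, which is the claim. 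One should also remark that the restriction $H^*_T(S^2)\to H^*_T(\{p\})\oplus H^*_T(\{q\})$ is indeed injective, which follows either from \eqref{eq:changskjelbred} applied here (the fixed point set is $\{p,q\}$) or from the exact sequence just displayed.

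For part (b), the same strategy applies but the geometry is simpler: cover $\RR P^2$ by a $T$-invariant disk $U$ around the unique fixed point $p$, equivariantly contractible onto $p$, and a $T$-invariant open Möbius-band complement $V$; then $U\cap V$ and $V$ are both $T$-equivariantly homotopy equivalent to a single free $T/K$-orbit $T/K$, so $H^*_T(V)\cong H^*_T(U\cap V)\cong S(\mfk^*)$ and the restriction $H^*_T(V)\to H^*_T(U\cap V)$ is an isomorphism. The Mayer–Vietoris sequence then forces $H^*_T(U)\to H^*_T(\RR P^2)$, equivalently $H^*_T(\RR P^2)\to H^*_T(\{p\})=S(\mft^*)$, to be an isomorphism. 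Alternatively, and perhaps more cleanly, one can invoke equivariant formality directly: $H^*_T(\RR P^2)$ is a free $S(\mft^*)$-module of rank $\dim H^*(\RR P^2;\RR)=1$, and since it injects into $H^*_T(M^T)=S(\mft^*)$ (by \eqref{eq:changskjelbred}) with $1\in H^0$ mapping to $1$, a rank count shows the injection is an isomorphism.

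The only genuinely delicate point is the identification of the equivariant homotopy type of $U\cap V$ (and of $V$ in part (b)) together with the induced map on equivariant cohomology — i.e. checking that the intersection really deformation-retracts $T$-equivariantly onto one orbit of type $T/K$ and that the restriction map $S(\mft^*)\to H^*_T(T/K)=S(\mfk^*)$ is the expected restriction of polynomials. This is routine given Lemma \ref{lett}(b) (which puts the action in the standard model), but it is where all the content of the statement lies; the homological algebra afterwards is formal. An alternative route that sidesteps Mayer–Vietoris entirely is to observe that both $S^2$ and $\RR P^2$ here are cohomogeneity-one $T/K$-spaces in standard position, quote the known computation of equivariant cohomology of such spaces (the "two-disk gluing" picture), and read off the answer; I would mention this as the quick proof and keep the Mayer–Vietoris argument as the self-contained one.
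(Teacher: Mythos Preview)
Your approach is essentially the paper's: both parts are handled by the equivariant Mayer--Vietoris sequence, and the paper also notes the cohomogeneity-one shortcut (citing \cite[Corollary 4.2]{GM}). One correction in part (b): the M\"obius-band piece $V$ is \emph{not} $T$-equivariantly homotopy equivalent to a regular orbit $T/K$; it retracts onto the unique exceptional orbit $T/K'$, where $K\subset K'$ with index two. The paper makes precisely this point --- the crucial observation is that the restriction $H^*_T(V)\cong H^*(BK')\to H^*_T(U\cap V)\cong H^*(BK)$ is nonetheless an isomorphism over $\RR$ because $K'/K$ is finite. Your alternative argument for (b) via equivariant formality and the rank-one count is correct and sidesteps this issue entirely.
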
 
 \begin{proof} Both assertions are consequences of the equivariant Mayer-Vietoris sequence; see also \cite[Theorem 11.7.2]{GuilleminSternberg} for case (a). In case (b), one considers tubular neighborhoods around $p$ and the unique exceptional orbit, and uses that the restriction map from the tubular neighborhood of the exceptional orbit to a regular orbit induces an isomorphism in cohomology. Note that both statements are special cases of \cite[Corollary 4.2]{GM}. \end{proof} 

The classical GKM graph encodes the structure of the 1-skeleton of an equivariantly formal torus action under the assumption that it consists of just a union of 2-spheres. 
In our more general situation, the graph has some new features, as follows.  There are two types of vertices, \emph{dots} and \emph{stars}. As in the orientable case, the dots correspond to the $T$-fixed points. Two dots $p$ and $q$ are joined by an edge if there exists a 2-sphere in the 1-skeleton whose $T$-fixed points are $p$ and $q$;  the edge is labeled with $K$,  the kernel of the $T$-action on the sphere. Each real projective two-plane in the 1-skeleton contains exactly one $T$-fixed point, say $p$, and one exceptional $T$-orbit. We draw a star representing the exceptional orbit, and an edge connecting this star with the dot which corresponds to $p$.

\begin{rem}If the dimension of $M$ is $2n$ (the codimension of the fixed point set is necessarily even, and, by assumption, the fixed point set is finite), then from each dot  emerge exactly $n$ edges, whereas from a star emerges always exactly one.
\end{rem}

The equivariant cohomology can be read off from the GKM graph as follows:
\begin{thm}\label{thm:gkmclassical}
 Via the natural restriction map 
 \[H^*_T(M)\to H^*_T(M^T) = \bigoplus_{p\in M^T} S(\mft^*),\] the equivariant cohomology $S(\mft^*)$-algebra $H^*_T(M)$ is isomorphic to the set of all tuples $(f_p)_{p\in M^T}$ with the property that if $p, q\in M^T$ are joined by an edge labeled
 with $K$, then $f_{p}|_{\mfk}=f_q|_{\mfk}.$ 
\end{thm}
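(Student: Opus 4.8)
The plan is to combine the abelian Chang--Skjelbred sequence \eqref{eq:changskjelbred} with the local computations of Lemma \ref{lem:cohompieces}, organised via a decomposition of the one-skeleton into its two-dimensional components. First, \eqref{eq:changskjelbred} tells us that the natural restriction $H^*_T(M)\to H^*_T(M^T)=\bigoplus_{p\in M^T}S(\mft^*)$ is injective, with image equal to the kernel of the connecting homomorphism $\delta\colon H^*_T(M^T)\to H^{*+1}_T(M_{r-1},M^T)$. So the entire problem is to identify $\ker\delta$ with the subalgebra $R$ of those tuples $(f_p)$ satisfying $f_p|_{\mfk}=f_q|_{\mfk}$ whenever $p,q$ are joined by an edge labeled $K$.

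Second, I would decompose the one-skeleton. Let $N_1,\dots,N_m$ be the components of $M_{r-1}$; by Lemma \ref{lett} each is an $S^2$ with two $T$-fixed points or an $\RR P^2$ with one $T$-fixed point. A short argument (if $x\in N_i\cap N_j$ with $N_i\ne N_j$ were not $T$-fixed, the codimension-one subtori having $N_i$ and $N_j$ as fixed-point components would together generate $T$, forcing $x\in M^T$) shows that distinct components meet only inside $M^T$; moreover GKM condition (iii) shows that each $p\in M^T$ lies on exactly $\tfrac12\dim M$ of the $N_i$, so $M^T=\bigcup_i(N_i\cap M^T)$. Consequently $M_{r-1}/M^T$ is the wedge $\bigvee_i N_i/(N_i\cap M^T)$, and excision gives $H^*_T(M_{r-1},M^T)\cong\bigoplus_i H^*_T(N_i,N_i\cap M^T)$ compatibly with $\delta$; under this identification $\delta=\bigoplus_i\delta_i$, where $\delta_i$ is the restriction $H^*_T(M^T)\to H^*_T(N_i\cap M^T)$ followed by the connecting map of the pair $(N_i,N_i\cap M^T)$.

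Third, for each $i$ the long exact sequence of the pair $(N_i,N_i\cap M^T)$ gives that $\ker\delta_i$ consists of the tuples whose restriction to $N_i\cap M^T$ lies in $\im\bigl(H^*_T(N_i)\to H^*_T(N_i\cap M^T)\bigr)$. By Lemma \ref{lem:cohompieces}(a), for an $N_i=S^2$ with fixed points $p,q$ and kernel $K$ this image is exactly $\{(g,h):g|_{\mfk}=h|_{\mfk}\}$; by Lemma \ref{lem:cohompieces}(b), for an $N_i=\RR P^2$ it is all of $S(\mft^*)$, imposing no condition. Intersecting $\ker\delta_i$ over all $i$ identifies $\ker\delta$ with $R$. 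Since every map in sight is a homomorphism of $S(\mft^*)$-algebras and $R$ is an $S(\mft^*)$-subalgebra of $\bigoplus_{p}S(\mft^*)$, the resulting bijection $H^*_T(M)\xrightarrow{\ \sim\ }R$ is an isomorphism of $S(\mft^*)$-algebras, as claimed.

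The step I expect to require the most care is the second one: verifying that the components of $M_{r-1}$ overlap only in fixed points (so that $M_{r-1}/M^T$ is genuinely a wedge), that no fixed point is missed, and that under the excision/wedge isomorphism the connecting homomorphism really splits as $\bigoplus_i\delta_i$ with the stated description. Once that geometric bookkeeping is in place, everything reduces formally to Lemma \ref{lem:cohompieces}; in particular the possibly nonorientable case is absorbed automatically, precisely because part (b) of that lemma says the $\RR P^2$-components impose no relations.
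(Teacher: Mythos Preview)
Your argument is correct and is precisely the standard GKM argument that the paper invokes (the paper's own proof is just the sentence ``Using Lemma \ref{lem:cohompieces}, the same proof as in ordinary GKM theory applies''); you have simply written out that argument in detail, including the handling of the $\RR P^2$-components via Lemma \ref{lem:cohompieces}(b). The one place to tighten is your claim that distinct components meet only in $M^T$: if the two components arise from the \emph{same} codimension-one subtorus they are different connected components of the same fixed-point set and hence disjoint, while if the subtori differ your argument applies --- but this is exactly the ``bookkeeping'' you already flagged.
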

\begin{proof} Using Lemma \ref{lem:cohompieces}, the same proof as in ordinary GKM theory applies, see \cite[Theorem 7.2]{GKM}. 
\end{proof}
Note that the components of the 1-skeleton which
are projective planes, that is, the edges connecting a dot with a star in the GKM graph, have no contribution to the equivariant cohomology. Nevertheless, we include them into the GKM graph for two reasons: firstly, we want the GKM graph to encode the entire structure of the 1-skeleton, and secondly, in the natural generalization of the GKM graph to actions of non-abelian groups to be defined below, the stars will in fact contribute nontrivially. Note that one could also label edges connecting dots with stars (and the stars themselves) with the respective isotropy groups, but this is not necessary because this data is of no relevance for the equivariant cohomology.

We end this section with an example of a GKM graph of a torus action on a non-orientable manifold.
 
 \begin{ex}\label{ex:gras} We consider the real Grassmannian $G_2(\RR^5)=G/K$, where $G:=\SO(5)$ and $K:={\rm S}({\rm O}(3)\times {\rm O}(2))$, with the action of 
$T:=\{1\} \times \SO(2)\times \SO(2)$, which is the standard maximal torus of $\SO(5)$. This Grassmannian
is not orientable: in general, $G_k(\RR^n)$ is orientable iff $n$ is even. We will show that our 
action satisfies the GKM assumptions (i)-(iii) above. Then we will construct the GKM graph and describe the equivariant
cohomology $S(\mft^*)$-algebra.

The group $K$ has two components,
  namely the identity component $K_0= \SO(3)\times \SO(2)$, along with $g_0K_0$, where $g_0$ is the diagonal matrix   ${\rm Diag}(1,1,-1,1, -1)$. 
  We consider the canonical projection map $\pi : G/K_0\to G/K$:
  it is the covering map that arises from the $\ZZ_2$-action on $G/K_0$ given by  multiplication
  by $g_0$   from the right.

We start with the GKM description for the $T$-action on $G/K_0$, by 
  following \cite{GHZ} (the theory derived there is applicable, as $G$ is a compact semisimple Lie group and $K_0$ a connected closed subgroup of maximal rank). 
  The $T$-fixed point set is $W(G)/W(K_0)$.  The Lie algebra of $T$ consists of diagonal blocks of the type
 $(0, A(\theta_1), A(\theta_2))$, where 
 $$A(\theta):=\left(
\begin{array}{cccccc} 
0  & -\theta \\
\theta & 0
\end{array}
\right). 
 $$
We regard the roots as elements of the Lie algebra $\mft$ of $T$, by putting on this a $T$-in\-variant metric.
Then $G$ and $K_0$ have a common root, namely $\alpha_1:=(0,A(1), A(0))$.
 The Weyl group $W(K_0)$ is generated by the reflection through the line perpendicular to $\alpha_1$.
Thus    $W(K_0)$ is the $W(G)$-stabilizer of the root $\alpha_1+\alpha_2$ in Figure \ref{fig1}
 (the left hand side diagram represents the roots of $G$, which lie in the Lie algebra of $T$).  That is, the
 set $(G/K_0)^T$ can be identified with the $W(G)$-orbit of $\alpha_1+\alpha_2$. This makes four points altogether, see the right hand side
diagram in Figure \ref{fig1}; we can also see there the six $T$-invariant 2-spheres in $G/K_0$, which are represented by thickened lines.   
 
 \begin{figure}[htb]
 \includegraphics[width=9.5cm]{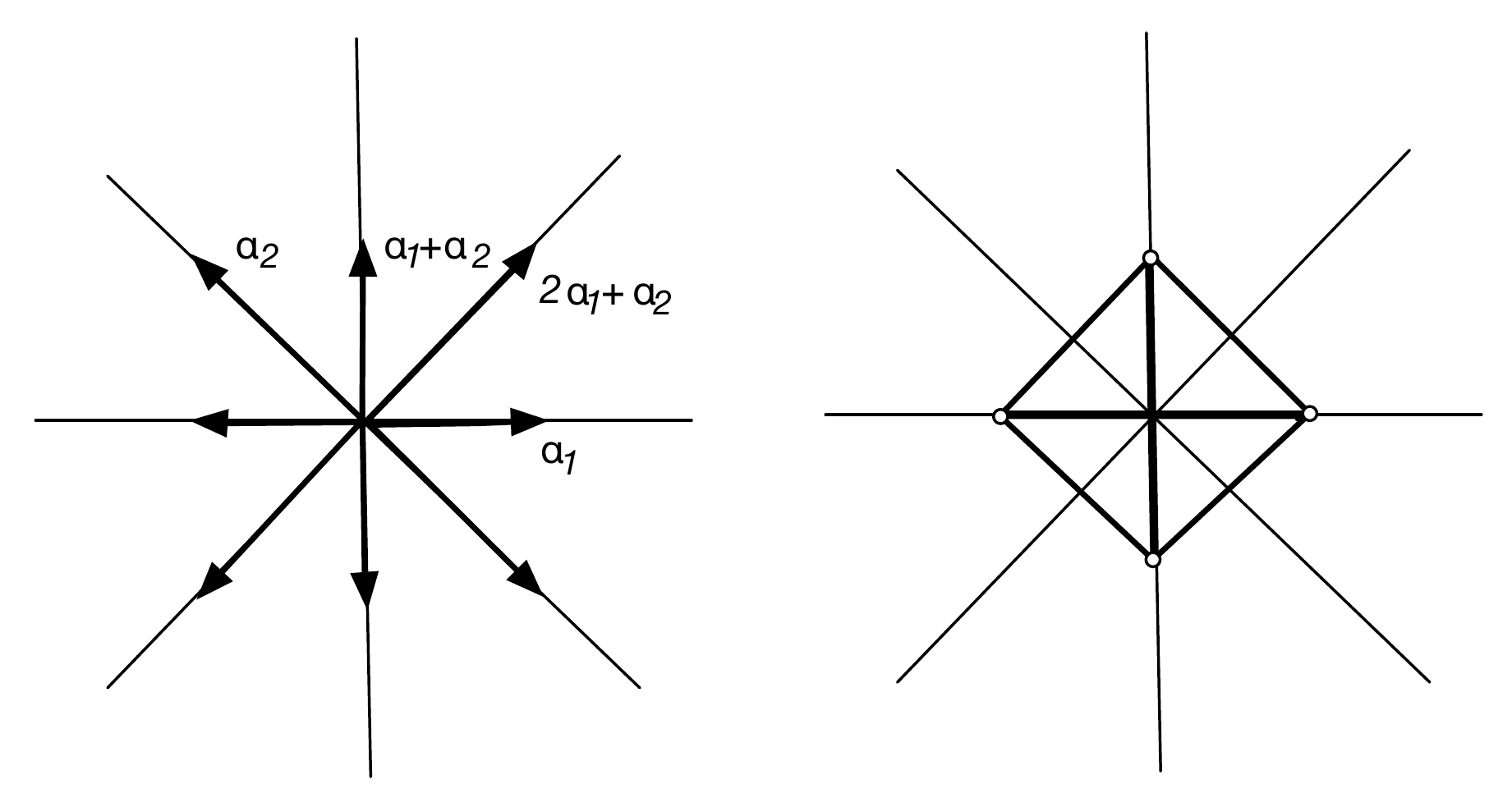}
 \caption{}
 \label{fig1}
 \end{figure}

 We would like now to describe the fixed points of the $T$-action on $G/K$. Since $\pi : G/K_0 \to G/K$ is a $\ZZ_2$-covering,
 which is also $T$-equivariant, we deduce that $(G/K)^T= \pi((G/K_0)^T)$: indeed, if
 $p=\pi(q)\in G/K$ is $T$-fixed, where $q\in G/K_0$, then the subspace $Tq$ of $G/K_0$ is contained in $\pi^{-1}(p)$,
 thus it must be equal to $\{q\}$. 
 
 In order to see concretely what $(G/K)^T$ is, one needs to understand the action of $g_0$ on $(G/K_0)^T$.
 Concretely, $g_0$ induces an element of the Weyl group of $G$ and we need to understand how does this act on
 the roots $\alpha_1$ and $\alpha_1+\alpha_2$ (see again Figure \ref{fig1}). We have  $\alpha_1=(0,A(1), A(0))$ and $\alpha_1+\alpha_2 =(0,A(0), A(1))$.
Thus
\begin{equation}\label{ad}{\rm Ad}_{g_0}(\alpha_1) = -\alpha_1, \quad {\rm Ad}_{g_0}(\alpha_1+\alpha_2) = -(\alpha_1 +\alpha_2).
\end{equation}
By identifying $G/K_0$ with the $G$-adjoint orbit of $\alpha_1+\alpha_2$, we conclude that $(G/K)^T$ has two fixed points, namely the cosets modulo $\ZZ_2$ of $\alpha_1$ and $\alpha_1+\alpha_2$.

The weights of the $T$-isotropy representation at any of these two fixed points are pairwise linearly independent, since 
the same is true for all isotropy representations of $G/K_0$ at the fixed points and $\pi: G/K_0 \to G/K$ is a $T$-equivariant local
diffeomorphism. We deduce that the 1-skeleton of $G/K$ consists of the fixed point sets of the same subtori of $T$ 
as those which determine the 1-skeleton of $G/K_0$.  Thus the 1-skeleton of $G/K$ consists of the images under $\pi$ of the six spheres represented in Figure \ref{fig1}.
By equation (\ref{ad}), the ``horizontal" and the ``vertical" spheres are invariant under ${\rm Ad}_{g_0}$: indeed, the latter is an element of the Weyl group and any such element permutes the $T$-invariant spheres. 
Hence the image of these two spheres are copies of $\RR P^2$.  For similar reasons, each of the two ``oblique" spheres on the
left hand side of the vertical axis are transformed under ${\rm Ad}_{g_0}$ into one of the other two ``oblique" spheres;
under $\pi$, out of the four ``oblique" spheres in $G/K_0$ one obtains only two in $G/K$. 
Let $T_1$ and $T_2$ be the circles (1-dimensional subtori in $T$) which leave these two spheres pointwise fixed.
Their Lie algebras are the kernels of $\alpha_2$, respectively $2\alpha_1+\alpha_2$. Now, a general element of
$T$ is a diagonal block of the form $(1, R(\theta_1), R(\theta_2))$, where    
 $$R(\theta):=\left(
\begin{array}{cccccc} 
\cos\theta  & -\sin \theta \\
\sin \theta & \cos\theta
\end{array}
\right). 
 $$
This gives rise to a natural presentation $T=S^1 \times S^1$ and also to the following expressions:
\begin{align*}
{}& T_1 =\{(z,z) \mid z\in S^1\}\\
{}& T_2= \{(z, z^{-1}) \mid z\in S^1\}.
\end{align*}
These are the labels carried by the two edges  in the GKM graph, see Figure \ref{fig2}.

 \begin{figure}[htb]
 \includegraphics[width=9.5cm]{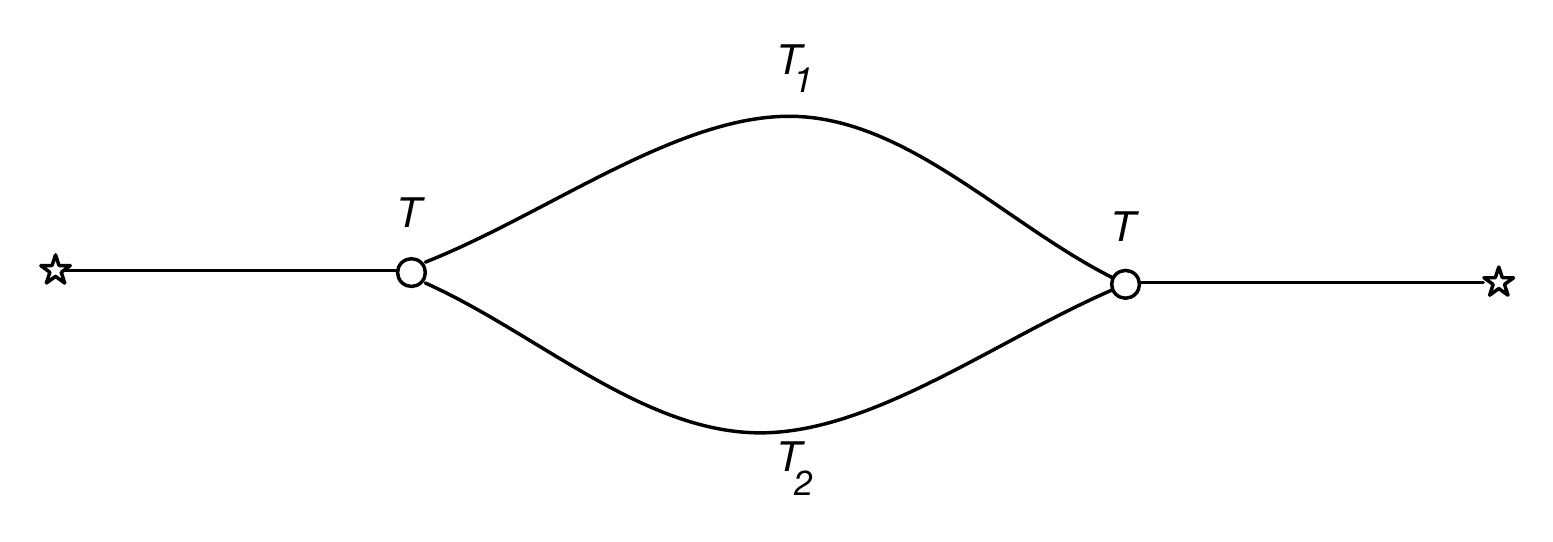}
 \caption{}
 \label{fig2}
\end{figure}

In conclusion, 
\begin{align*}
H^*_T(G_2(\RR^5))&\simeq \{(f, g)\in S(\mft^*)\oplus S(\mft^*)\mid f - g {\rm \ is \ divisible \ by \ } \alpha_2(2\alpha_1+\alpha_2)\}\\
&\simeq \{(f,g)\in \RR[s_1,s_2]^2\mid f(s,s)=g(s,s),\, f(s,-s)=g(s,-s), s\in \RR\}.
\end{align*}

 \end{ex}

\section{The non-abelian 1-skeleton}\label{sec:non-abeliansk}

Let us consider a group action $G\times M \to M$, where $G$ is a 
compact connected Lie group and $M$ a compact connected manifold.
Set $r:=\rk G$.
If the action is equivariantly formal, then, by Theorem \ref{thm:CS}, we can identify $H^*_G(M)$ 
with an $S(\mfg^*)^G$-subalgebra of $H^*_G(\Mmax)$, namely with the image of the map
$H^*_G(M_{r-1})\to H^*_G(\Mmax)$. 
Our next goal is to obtain a description of this image. This will be done under
the assumption that the following {\it non-abelian GKM conditions} are fulfilled:
\begin{enumerate}
\item[(i)] The $G$-action is equivariantly formal.
\item[(ii)] The space $\Mmax$ consists of isolated $G$-orbits.
\item[(iii)] For every $p\in \Mmax$, the weights of the isotropy representation of $G_p$ on $T_pM$ are pairwise linearly independent.
\end{enumerate}
These conditions will be in force throughout the rest of the paper.
In this section we obtain some preliminary results, which concern
exclusively the structure of $M_{r-1}$. (For obvious reasons, this space deserves
to be called the {\it non-abelian 1-skeleton} of the $G$-action.)  

\subsection{Abelian and non-abelian GKM conditions }
Let us fix a maximal torus $T\subset G$. Recall from Section \ref{section:one} the notation $M_i:=\{p\in M\mid \rk G_p \geq i\}$, and let us also define $M_{i, T}:=\{p\in M \mid \dim T_p \ge i \}$.

\begin{lem}\label{lem:TandGstrata} $M_{i} = G M_{i, T}$.
\end{lem}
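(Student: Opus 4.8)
The plan is to establish the two inclusions $GM_{i,T}\subseteq M_i$ and $M_i\subseteq GM_{i,T}$ separately. The first is essentially immediate, while the second rests on the classical structure theory of compact connected Lie groups, specifically the conjugacy of maximal tori.

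For $GM_{i,T}\subseteq M_i$: if $p\in M_{i,T}$, then $T_p=T\cap G_p$ is a closed subgroup of $T$, hence a compact abelian Lie group whose identity component is a torus of dimension $\dim T_p\geq i$; as $T_p\subseteq G_p$, this gives $\rk G_p\geq i$. For any $g\in G$ one has $G_{gp}=gG_pg^{-1}$, which has the same rank, so $gp\in M_i$ as well. Hence $GM_{i,T}\subseteq M_i$.

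For the reverse inclusion, I would start with $p\in M_i$, so that the compact (possibly disconnected) group $G_p$ satisfies $\rk G_p\geq i$; thus its identity component $(G_p)_0$ contains a maximal torus $S$ with $\dim S=\rk G_p\geq i$. Now $S$ is a torus of $G$, so it is contained in some maximal torus of $G$, and by the conjugacy of maximal tori there is $g\in G$ with $S\subseteq gTg^{-1}$, equivalently $g^{-1}Sg\subseteq T$. Since also $g^{-1}Sg\subseteq g^{-1}G_pg=G_{g^{-1}p}$, we obtain $g^{-1}Sg\subseteq T\cap G_{g^{-1}p}=T_{g^{-1}p}$, and therefore $\dim T_{g^{-1}p}\geq\dim S\geq i$. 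That is, $g^{-1}p\in M_{i,T}$, whence $p=g\cdot(g^{-1}p)\in GM_{i,T}$.

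There is no substantial obstacle here: the only non-formal ingredient is the classical fact that every torus in a compact connected Lie group lies in a maximal torus and that all maximal tori are conjugate. The one point that requires a little care is that $G_p$ need not be connected, which is why one passes to the identity component $(G_p)_0$ in order to produce a torus $S$ of the correct dimension $\rk G_p$.
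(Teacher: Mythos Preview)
Your proof is correct and follows essentially the same approach as the paper's: both directions rest on the observation that $T_p\subset G_p$ gives $\rk G_p\geq\dim T_p$, and on the conjugacy of maximal tori in $G$ to move a maximal torus of $G_p$ into $T$. You are slightly more explicit than the paper in noting the $G$-invariance of $M_i$ and in passing to the identity component $(G_p)_0$, but the argument is the same.
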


\begin{proof} If $p\in M_{i}$, then $G_p$ contains a maximal torus $T'$  of dimension at least $i$.
There exists $g\in G$ such that $gT'g^{-1} \subset T$. Consequently $T_{gp}=G_{gp}\cap T =
gG_pg^{-1} \cap T $ contains $gT'g^{-1}$, and thus has dimension at least $i$.
Conversely, if $p\in M_{i, T}$, then $G_p$ contains $T_p$, hence it has rank at least $i$.
\end{proof}

\begin{lem}\label{ift} The $G$-action satisfies the non-abelian GKM conditions if and only if  $M$  is a GKM space relative to the  $T$-action (see Section \ref{abelian}).
\end{lem}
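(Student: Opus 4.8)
The plan is to translate each of the three non-abelian GKM conditions for the $G$-action into the corresponding abelian GKM condition for the $T$-action, the common bridge being the identity $\Mmax = G\cdot M^T$. First I would record this identity: by Lemma \ref{lem:TandGstrata} we have $\Mmax = M_r = G M_{r,T}$, and since $\rk G_p\le r=\dim T$ for every $p$, the set $M_{r,T}=\{p:\dim T_p\ge r\}$ is exactly $M^T$; hence $\Mmax=GM^T$. In particular $M^T\subseteq\Mmax$ and $M^T=\Mmax^T$, and $\Mmax$, being the continuous image of the compact space $G\times M^T$, is compact, so for $\Mmax$ the conditions ``isolated $G$-orbits'' and ``finitely many $G$-orbits'' agree.

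\emph{Condition (i).} Equivariant formality of the $G$-action is equivalent to that of the $T$-action; this is standard, but if one wants it directly one can apply Leray--Hirsch to the bundle $M\times_T EG\to M\times_G EG$ with fibre $G/T$: since $H^*(G/T)$ is generated by its degree-two part, which is the image of $H^2(BT)$, one obtains $H^*_T(M)\cong H^*_G(M)\otimes H^*(G/T)$ as $H^*(BG)$-modules, and likewise $H^*(BT)\cong H^*(BG)\otimes H^*(G/T)$ because $S(\mft^*)$ is free over $S(\mft^*)^W=H^*(BG)$. Hence $H^*_G(M)$ is free over $H^*(BG)$ iff $H^*_T(M)$ is free over $H^*(BT)$: one implication is immediate, and for the other one uses that $H^*_G(M)$ is a graded direct summand of the free $H^*(BG)$-module $H^*_T(M)$ and that graded projective modules over the graded polynomial ring $H^*(BG)$ are free.

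\emph{Condition (ii).} If $M^T$ is finite, then $\Mmax=GM^T$ is visibly a finite union of $G$-orbits. Conversely, if $\Mmax=Gp_1\cup\dots\cup Gp_k$ with $p_i\in M^T$ (possible since $\Mmax=GM^T$), then $M^T=\Mmax^T=\bigcup_i(G/G_{p_i})^T$, so it suffices to show that $(G/H)^T$ is finite whenever $T\subseteq H\subseteq G$ with $\rk H=r$. If $gH\in(G/H)^T$, then $g^{-1}Tg$ is a torus of dimension $r=\rk H$ in $H$, hence a maximal torus of $H$, and so is conjugate in $H$ to the maximal torus $T$ of $H$; this forces $g\in N_G(T)H$, whence $(G/H)^T$ is a quotient of the finite group $N_G(T)/(N_G(T)\cap H)$.

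\emph{Condition (iii).} Pairwise linear independence of the weights of the isotropy representation of $G_p$ on $T_pM$ is invariant under replacing $p$ by another point of its $G$-orbit, since conjugation carries a maximal torus of $G_p$ to one of $G_{gp}$ and the weights to their images under the induced linear isomorphism; so by $\Mmax=GM^T$ it holds for all $p\in\Mmax$ iff it holds for all $p\in M^T$. For $p\in M^T$ the torus $T$ is a maximal torus of $G_p$, and the isotropy $T$-representation on $T_pM$ is the restriction of the isotropy $G_p$-representation, so the weights of the $G_p$-representation (taken, as always, with respect to a maximal torus) coincide with the weights of the $T$-representation. Thus condition (iii) for $G$ is equivalent to pairwise linear independence of the weights of $T$ on $T_pM$ for all $p\in M^T$, i.e.\ to abelian condition (iii) for $T$ (see Section \ref{abelian}). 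Combining the three equivalences proves the lemma. I expect condition (i)---the transfer of equivariant formality between $G$ and $T$---to be the only step that is not a pure unwinding of definitions; everything else rests on nothing deeper than the conjugacy of maximal tori in a compact group.
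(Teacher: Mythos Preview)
Your proof is correct and follows the same route as the paper: establish $\Mmax=GM^T$ via Lemma \ref{lem:TandGstrata}, then match the three conditions one by one, using conjugacy of maximal tori in $G_p$ for (ii) and the fact that the weights of a $G_p$-representation are by definition those of its maximal torus for (iii). The only substantive difference is that for condition (i) the paper simply cites \cite[Proposition C.26]{GGK}, whereas you supply a direct Leray--Hirsch/splitting argument; your argument is fine, though the cleanest way to phrase the forward implication is via the change-of-groups isomorphism $H^*_T(M)\cong H^*_G(M)\otimes_{H^*(BG)}H^*(BT)$ of $H^*(BT)$-modules, which makes freeness over $H^*(BT)$ immediate.
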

\begin{proof} Equivariant formality of the $G$- and the $T$-action are equivalent by \cite[Proposition C.26]{GGK}. By Lemma \ref{lem:TandGstrata}, we have $\Mmax = GM^T$, so if $M^T$ is finite, then $\Mmax$ consists of finitely many $G$-orbits. Conversely, if $\Mmax$ consists of finitely many $G$-orbits, then $M^T$ is finite because there are only finitely many $T$-fixed points in a homogeneous space of the form $G/H$ with $\rk G = \rk H$ (more precisely, the fixed point set is the quotient of Weyl groups $W(G)/W(H)$). The conditions on the isotropy representations are obviously equivalent because the weights of the $G_p$-representation are by definition the weights of the representation of the maximal torus $T\subset G_p$.
\end{proof}

\begin{ex}\label{examp} 
 If $G$ is a semisimple compact Lie group, and $H\subset G$ a  connected closed subgroup with $\rk H = \rk G$, then the $H$-action on $G/H$ by left multiplication satisfies the non-abelian GKM conditions. In fact, it is shown in \cite[Theorem 1.1]{GHZ} that the action of a maximal torus in $H$ satisfies the GKM conditions. 
 The special cases  $\Sp(n)/\Sp(1)^n$ and $F_4/\Spin(8)$
 are investigated in \cite{M} and \cite{MW} respectively.  
 \end{ex}

\begin{lem}\label{lem:gkmcond} If the $G$-action satisfies the non-abelian GKM conditions, then
the induced action of $T$ on $\Mmax$ satisfies  the usual GKM conditions,
see (i)-(iii) in Section \ref{abelian}.
\end{lem}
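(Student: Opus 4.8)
The plan is to show that the $T$-action on the subspace $\Mmax \subset M$, viewed as a $T$-manifold in its own right, satisfies conditions (i)--(iii) of Section \ref{abelian}. First I would observe that by Lemma \ref{lem:TandGstrata} we have $\Mmax = GM^T$, and since the non-abelian GKM conditions hold, $\Mmax$ consists of finitely many $G$-orbits, each of the form $G/H$ with $\rk H = r$; hence $M^T = (\Mmax)^T$ is finite (the fixed set of $T$ on each orbit $G/H$ is the finite set $W(G)/W(H)$). This immediately gives condition (ii) for the $T$-action on $\Mmax$. Condition (iii) is also essentially immediate: for $p \in (\Mmax)^T = M^T$, the tangent space $T_p(\Mmax)$ is a $T$-invariant subspace of $T_pM$, so its weights form a subset of the weights of the $G_p$-isotropy representation on $T_pM$ (more precisely, of the $T$-weights thereof, since $T \subset G_p$); pairwise linear independence of the latter, which is exactly condition (iii) of the non-abelian GKM conditions as restated at the start of Section \ref{sec:non-abeliansk}, descends to any subset.

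The substantive point, and the one I expect to be the main obstacle, is equivariant formality of the $T$-action on $\Mmax$ (condition (i)). Here I would use the result cited in the proof of Theorem \ref{thm:CS}, namely \cite[Corollary 4.5]{GR}, which states that $H^*_G(\Mmax)$ is a free module over $H^*(BG)$. Equivalently, the $G$-action on $\Mmax$ is equivariantly formal. One must then pass from $G$-equivariant formality of $\Mmax$ to $T$-equivariant formality of $\Mmax$; this is exactly the content of \cite[Proposition C.26]{GGK} already invoked in the proof of Lemma \ref{ift}, applied now to the $G$-space $\Mmax$ rather than to $M$. It is worth noting that $\Mmax$ is a disjoint union of the homogeneous spaces $G/H$ (with $\rk H = \rk G$), so $T$-equivariant formality can also be checked orbit by orbit: for each such $G/H$, the $T$-action is equivariantly formal because $H^*(G/H)$ is concentrated in even degrees (as $\rk H = \rk G$) and equivariant formality for torus actions is equivalent to the vanishing of odd cohomology together with the collapse of the Serre spectral sequence, which holds here.

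Putting these together: having verified (i), (ii), (iii) for the $T$-action on the compact (though possibly disconnected) manifold $\Mmax$, we conclude that $\Mmax$ with its $T$-action is a GKM space in the sense of Section \ref{abelian}, which is what the lemma asserts. The only care needed is the harmless fact that $\Mmax$ need not be connected; all the relevant statements in Section \ref{abelian} apply componentwise, and equivariant formality, finiteness of the fixed set, and the weight condition are each preserved under taking finite disjoint unions. I would therefore structure the proof as: (1) finiteness of $M^T$ via $\Mmax = GM^T$ and the Weyl-group description; (2) the weight condition as a restriction of the $G_p$-weight condition; (3) equivariant formality of the $T$-action on $\Mmax$ via \cite[Corollary 4.5]{GR} together with \cite[Proposition C.26]{GGK} (or, alternatively, via the even-cohomology argument on each orbit $G/H$).
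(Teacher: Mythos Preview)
Your proposal is correct, and in fact it strictly contains the paper's proof. The paper argues (i) directly by your alternative route: each component of $\Mmax$ is a homogeneous space $G/H$ with $\rk H=\rk G$, hence $H^{\mathrm{odd}}(G/H)=0$, and this alone forces the $T$-action to be equivariantly formal; conditions (ii) and (iii) are dismissed as ``clearly satisfied'', which is exactly what you spell out via the Weyl-group count and the weight-restriction argument. Your primary route to (i), via freeness of $H^*_G(\Mmax)$ from \cite[Corollary 4.5]{GR} and then \cite[Proposition C.26]{GGK}, is valid but a longer detour --- the even-degree argument is both simpler and self-contained, so the paper's choice is the cleaner one.
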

\begin{proof}
Any component of $\Mmax$ is   a homogeneous space of the form $G/H$, where $H$ has maximal rank in $G$.  Since $H^{\rm odd}(G/H)=0$, the $T$-action on this component is equivariantly formal. Conditions (ii) and (iii) in the definition of 
GKM space, see Section \ref{abelian}, are also clearly satisfied.   \end{proof}

\begin{ex}
For equivariantly formal torus actions, Lemma \ref{fixp} shows that $\Mmax=M^T$ is the infinitesimal bottom stratum for the action. For general $G$-actions this is no longer true, even if they satisfy the non-abelian GKM conditions. As an example, consider the cohomogeneity-one action of $G=S^3\times S^3$ with group diagram $S^3\times S^3\supset S^1\times S^1, \Delta(S^3)\supset \Delta(S^1)$, where $\Delta(H)$ denotes the diagonally embedded subgroup $\{(h,h)\mid h\in H\}$ in $S^3\times S^3$.
One of the singular isotropies is of maximal rank, thus $\Mmax=S^3\times S^3/S^1\times S^1$. Furthermore, this implies using \cite[Corollary 1.3]{GM} that the action is equivariantly formal. The second singular orbit is part of the infinitesimal bottom stratum, but not a subset of $\Mmax$. Condition (iii) in the definition of a GKM space is also fulfilled: Let $p\in \Mmax$ with  $G_p=S^1\times S^1$. The three weights of the isotropy representation at $p$ are pairwise linearly independent, as their kernels are given by $\{(x,0)\}\subset \mfg_p$, $\{(0,x)\}\subset \mfg_p$, respectively $\{(x,x)\}\subset \mfg_p$.
\end{ex}

\subsection{Abelian and non-abelian 1-skeletons}
Recall from Section \ref{abelian} that the GKM conditions for the $T$-action on $M$ imply that $M_{r-1,T}$ can be written as a finite union 
$\bigcup_{i\in I}S_i\cup\bigcup_{j\in J} P_j $
where $S_i$
are 2-spheres and $P_j$ real projective planes.  Each $S_i$ and each  $P_j$ is a connected component of the fixed point set of a 
codimension-one torus $T_i\subset T$, resp.~$T_j\subset T$; $S_i$ contains exactly two $T$-fixed points, say $p_i$ and $q_i$; $P_j$ contains one $T$-fixed point, say $r_j$. Observe that 
$M_{r-1}=\bigcup_{i\in I} GS_i \cup \bigcup_{j\in J}GP_j $ by Lemma \ref{lem:TandGstrata}.
 Each space $GS_i$ contains the components $Gp_i$ and $Gq_i$ of $\Mmax$; similarly, $GP_j$ contains $Gr_j$. The next lemma shows that they contain no other components of $\Mmax$; note, however, that the two components of $GS_i\cap \Mmax$ might coincide, see  Example \ref{ex:hp1} below.

\begin{lem}\label{lem:take} \begin{enumerate}
\item[(a)] Take $i\in I$ such that $GS_i\neq Gp_i$, i.e., $GS_i$ is not just a $G$-orbit.
Then $S_i\cap \Mmax =\{p_i, q_i\}$ and consequently 
 $GS_i\cap \Mmax=Gp_i\cup Gq_i$.
\item[(b)] Take $j\in J$ such that $GP_j\neq Gr_j$, i.e., $GP_j$ is not just a $G$-orbit.
Then $P_j\cap \Mmax =\{r_j\}$ and consequently 
 $GP_j\cap \Mmax=Gr_j$.
\end{enumerate}
\end{lem}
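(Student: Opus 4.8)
The plan is to analyze a single two-dimensional piece $S_i$ (the case of $P_j$ is entirely analogous) and determine which of its points have isotropy of rank $r$. Write $T_i\subset T$ for the codimension-one subtorus fixing $S_i$ pointwise, so $\dim T_i = r-1$ and $T_i\subset G_p$ for every $p\in S_i$. Since $S_i$ is a connected component of $M^{T_i}$ and the GKM conditions hold for the $T$-action, $S_i$ is a $2$-sphere on which the circle $T/T_i$ acts in the standard way, with exactly the two fixed points $p_i,q_i$. For any $p\in S_i\setminus\{p_i,q_i\}$, the $T$-isotropy is exactly $T_i$, which has dimension $r-1$; hence, by Lemma \ref{lem:TandGstrata} applied with $i=r$ (i.e.\ the description $\Mmax = M_r = GM_{r,T}$), such a $p$ lies in $\Mmax$ only if it is $G$-conjugate to a point of $M^T=M_{r,T}$. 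So the heart of the matter is to rule out the possibility that a non-pole point of $S_i$ has full-rank $G$-isotropy despite having only rank $r-1$ torus isotropy.

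The key observation is that $S_i\cap\Mmax$ is a closed $T/T_i$-invariant subset of $S^2$, hence (being $T_i$ already contained in every isotropy) a closed subset of $S^2$ invariant under the standard circle action; such a set is either all of $S^2$, or a union of $T/T_i$-orbits, i.e.\ finitely many circles together with possibly the poles. If some non-pole $p$ lay in $\Mmax$, then its whole $T/T_i$-orbit, a circle $C$, would lie in $\Mmax$, and then $GC\subset \Mmax$ would be a positive-dimensional family of orbits inside $\Mmax$; but by non-abelian GKM condition (ii), $\Mmax$ consists of only finitely many $G$-orbits, and a single $G$-orbit meets $S_i$ in a $T$-invariant (indeed $T/T_i$-invariant) subset — and crucially $GC\cap S_i$ cannot be all of $C$ for infinitely many distinct orbits. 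More precisely: if $C\subset\Mmax$, then every point of $C$ has $\rk G_p = r$, so $G_p$ contains a maximal torus; conjugating into $T$ shows $p$ is $G$-conjugate into $M^T$. Thus $C\subset G M^T$. But $M^T$ is finite, so $GM^T$ is a finite union of orbits $Gx_1,\dots,Gx_m$, and $C\subset \bigcup Gx_k$ forces $C\subset Gx_k$ for a single $k$ (as $C$ is connected and each $Gx_k$ is closed). Hence $C$ lies in one $G$-orbit. Now I would derive a contradiction with GKM condition (iii): a $G$-orbit is a homogeneous space $G/H$ with $\rk H=r$, and inside it the $T$-fixed points are isolated (the quotient $W(G)/W(H)$); but $C$ is a circle of points each with $T$-isotropy of dimension exactly $r-1$, so the $T$-action on this orbit $Gx_k$ has a point with $(r-1)$-dimensional isotropy, i.e.\ a non-fixed point of a codimension-one subtorus. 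This is fine on its own — the real contradiction comes from the poles. At $p_i\in M^T\subset \Mmax$, by condition (iii) the weights of the $G_{p_i}$-isotropy representation on $T_{p_i}M$, equivalently the $T$-weights (the weights of $T\subset G_{p_i}$), are pairwise linearly independent. The $2$-sphere $S_i$ is the submanifold tangent to the weight-$\beta$ subspace for the single weight $\beta$ whose kernel is $\mathfrak{t}_i$; and $GS_i$ near $p_i$ is swept out by the orbit directions. If $C\subset S_i$ lies in a single orbit $Gx_k$, then near $p_i$ the orbit $Gx_k$ contains the disk of $S_i$ bounded by $C$, forcing the tangent space $T_{p_i}(Gx_k)$ to contain the weight-$\beta$ line; but then, since $C$ also lies in $Gx_k$ and limits onto $q_i$, a dimension/weight count on $T_{p_i}M$ using pairwise-independence of weights pins down $Gx_k$ to be all of $GS_i$, i.e.\ $GS_i=Gx_k=Gp_i$, contradicting the standing hypothesis $GS_i\neq Gp_i$.

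The cleanest way to organize the last step is probably this: suppose $S_i\cap\Mmax\supsetneq\{p_i,q_i\}$. Then, as above, it contains a full circle $C$, and then by connectedness plus closedness it actually contains every circle orbit, hence all of $S^2=S_i$; so $S_i\subset\Mmax$. But $\Mmax$ is a union of orbits, and $S_i$ connected and contained in finitely many orbits $Gx_k$ forces $S_i\subset Gx_k$ for one $k$; as $Gx_k$ is a closed submanifold and $S_i$ is a closed submanifold of the same manifold containing it, and $Gx_k$ is homogeneous, in fact $GS_i\subseteq Gx_k$; combined with $x_k\in S_i$ (after translating) we get $GS_i=Gx_k$ is a single orbit, i.e.\ $GS_i=Gp_i$, contradiction. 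Hence $S_i\cap\Mmax=\{p_i,q_i\}$, and applying $G$ and using Lemma \ref{lem:TandGstrata} gives $GS_i\cap\Mmax = GS_i\cap GM^T = G(S_i\cap\Mmax) = Gp_i\cup Gq_i$ — here one must check the middle equality $GS_i\cap GM^T = G(S_i\cap M^T)$, which holds because if $gx\in S_i$ with $x\in M^T$ then $gx\in M^T$ as well (conjugating isotropy), hence $gx\in S_i\cap M^T$. Part (b) is the same argument with $P_j\cong\RR P^2$, whose standard circle action has one fixed point $r_j$ and the circle-invariant closed subsets are $\{r_j\}$, the exceptional circle, and all of $\RR P^2$; again only $\{r_j\}$ survives without forcing $GP_j$ to collapse to one orbit.

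The main obstacle I anticipate is the step "$S_i\cap\Mmax$ contains a non-pole point $\Rightarrow$ it contains an entire circle orbit, hence all of $S_i$" in a way that cleanly yields $GS_i$ being a single orbit without circular reasoning — one has to be careful that "contained in finitely many $G$-orbits" genuinely forces "contained in one $G$-orbit," which uses connectedness of $S_i$ together with the fact that each $Gx_k\cap S_i$ is closed, so $\{Gx_k\cap S_i\}$ is a finite closed cover of the connected space $S_i$ and the only way this gives a single piece is if the pieces are also open in $S_i$; they are not a priori open, so instead I would argue that $S_i=\bigcup_k (Gx_k\cap S_i)$ with each $Gx_k$ an embedded submanifold, so $Gx_k\cap S_i$ is a submanifold of $S_i$, a finite union of which is $S_i$, forcing one of them to be $2$-dimensional and hence (being closed in $S^2$) all of $S_i$. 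That is the one spot that needs genuine care rather than routine bookkeeping.
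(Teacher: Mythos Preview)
Your proposal has a genuine gap. The crucial step is the claim that if $S_i\cap\Mmax$ contains a non-pole point, then $S_i\cap\Mmax=S_i$. You assert this follows ``by connectedness plus closedness,'' but it does not: $S_i\cap\Mmax$ is a closed $T$-invariant subset of $S^2$, and such a subset can perfectly well be a single latitude circle together with the two poles without being all of $S^2$. Your final paragraph flags a difficulty, but the wrong one---you worry about passing from ``$S_i$ lies in finitely many orbits'' to ``$S_i$ lies in one orbit'' (which is in fact easy, since distinct $G$-orbits are disjoint and closed and $S_i$ is connected), while the real gap is the prior step of obtaining $S_i\subset\Mmax$ at all. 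The argument in your last paragraph even assumes $S_i=\bigcup_k(Gx_k\cap S_i)$, which is exactly the statement $S_i\subset\Mmax$ you have not yet established.

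The paper's proof sidesteps this by working inside a single orbit rather than inside $\Mmax$. Given a point $p\in S_i\cap\Mmax$, consider the component $N$ of $(Gp)^{T_i}$ through $p$; it is a closed submanifold of $Gp$ and is contained in $S_i$ (since $S_i$ is the component of $M^{T_i}$ through $p$). The key input is Lemma~\ref{lem:gkmcond}: the $T$-action on the homogeneous space $Gp$ is itself GKM, so positive-dimensional components of $(Gp)^{T_i}$ are two-dimensional. Thus either $N=\{p\}$, forcing $p$ to be $T$-fixed (hence a pole), or $\dim N=2$, forcing $N=S_i$ and hence $GS_i=Gp$, contradicting the hypothesis. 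The idea you are missing is precisely this comparison of $T_i$-fixed components: instead of trying to show $S_i\subset\Mmax$, one shows directly that $S_i\subset Gp$.
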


\begin{proof}  (a) Take $p\in S_i\cap \Mmax$. There exists $g\in G$ such that
$gTg^{-1}$ fixes $p$, hence $gp\in M^T$. 
The orbit $Ggp=Gp$ has a common point with $S_i$ at $p$. 
The $T$-action on this orbit satisfies the usual GKM conditions, see Lemma  
\ref{lem:gkmcond}. There are two cases:
first, if $p$ is an isolated fixed point of $T_i$ in $Gp$, then $p$ is fixed by $T$, hence 
$p\in \{p_i, q_i\}$; 
second, if $p$ is not isolated in $(Gp)^{T_i}$, then the positive dimensional connected components 
of the latter set are 2-spheres or real projective planes, 
and one of them contains $p$. On the other hand, $S_i$ is the connected component of $M^{T_i}$ which contains $p$. Hence $S_i\subset Gp$, which implies that $GS_i=Gp$,
contradiction.

(b) We use the same argument as in the proof of (a).
\end{proof}

\begin{ex}\label{ex:hp1} The  orbits $Gp_i$ and $Gq_i$ mentioned in point (a) of the lemma are not necessarily
different. Let us consider for example the quaternionic projective line $\HH P^1$, which consists
of all $1$-dimensional $\HH$-linear subspaces of $\HH^2$. It has a canonical transitive action of
the symplectic group $\Sp(2)$ of all $2\times 2$ matrices $A$ with entries in
 $\HH$ satisfying $AA^*=I_2$.  We are  interested in the action of the group 
which consists of all elements in $\Sp(2)$ whose entries are in $\CC$, where the latter is canonically 
embedded in $\HH$: this group is obviously isomorphic to the unitary group $\U(2)$.
That is, we look at the cohomogeneity-one action of $G:=\U(2)$ on $M:=\HH P^1$. We choose the standard maximal torus 
$T$ in $\U(2)$, i.e., the space of all diagonal unitary matrices. The $T$-fixed points are $[1:0]$
and $[0:1]$ (see, e.g., \cite{M}). As $[0:1]=g[1:0]$ for $g=\left(\begin{matrix}0 & 1 \\ 1 & 0\end{matrix}\right)\in \U(2)$, 
there is just one orbit of the type $Gp$, $p\in M^T$. See Example \ref{u2} for a continuation of this example. \end{ex}

The intersection of two submanifolds of the form $S_i$ or $P_j$
 consists of at most two points in $M^T$. We are now interested in the intersection of the  $G$-orbits of two such spaces:

\begin{lem}\label{lem:intersection}
\begin{enumerate}
\item[(a)] If $i_1, i_2 \in I$ are such that  $GS_{i_1}\neq GS_{i_2}$ then the intersection $GS_{i_1} \cap GS_{i_2}$ is empty or consists of
one or two $G$-orbits in $\Mmax$. 
\item[(b)] If $j_1, j_2 \in J$ are such that  $GP_{j_1}\neq GP_{j_2}$ then the intersection $GP_{j_1} \cap GP_{j_2}$ is empty or consists of
one  $G$-orbit in $\Mmax$. 
\item[(c)] If $i\in I$ and $j \in J$ are such that  $GS_i\neq GP_j$ then the intersection $GS_{i} \cap GP_{j}$ is empty or consists of
one  $G$-orbit in $\Mmax$.
\end{enumerate}
\end{lem}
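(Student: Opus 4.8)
The plan is to reduce everything to statements about intersections of $T$-invariant pieces inside a single $G$-orbit, and then invoke the GKM structure of that orbit (Lemma \ref{lem:gkmcond}) together with the connectedness observation already used in the proof of Lemma \ref{lem:take}. For part (a), suppose $p \in GS_{i_1} \cap GS_{i_2}$. The first observation is that $GS_{i_1} \cap GS_{i_2}$ is a closed $G$-invariant subset of $M_{r-1}$, so it suffices to understand it orbit by orbit. If $Gp$ is not contained in $\Mmax$, then $Gp$ meets $S_{i_1}$ in a positive-dimensional submanifold (since $p$ cannot be an isolated $T_{i_1}$-fixed point in $Gp$ without being a $T$-fixed point, and the argument of Lemma \ref{lem:take} forces $S_{i_1} \subset Gp$, whence $GS_{i_1} = Gp$), so either $GS_{i_1} = Gp = GS_{i_2}$, contradicting the hypothesis, or we are done. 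Hence every orbit in $GS_{i_1} \cap GS_{i_2}$ lies in $\Mmax$. Since $\Mmax$ consists of finitely many orbits, it remains to bound the number of orbits of $\Mmax$ that can meet both $GS_{i_1}$ and $GS_{i_2}$.

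The key step is the bound. By Lemma \ref{lem:take}(a) (applied in the non-degenerate case; the degenerate case $GS_i = Gp_i$ is handled separately and trivially, since then $GS_i$ is a single orbit) we have $GS_{i_1} \cap \Mmax = Gp_{i_1} \cup Gq_{i_1}$ and $GS_{i_2} \cap \Mmax = Gp_{i_2} \cup Gq_{i_2}$, so $GS_{i_1} \cap GS_{i_2} \cap \Mmax \subset (Gp_{i_1} \cup Gq_{i_1}) \cap (Gp_{i_2} \cup Gq_{i_2})$, which a priori could contain up to two orbits — giving the claimed bound of at most two $G$-orbits. (If $GS_i$ is a single orbit $Gp_i$, then $GS_{i_1} \cap GS_{i_2}$ is either empty or all of $Gp_i$, consistent with the statement.) For parts (b) and (c) the same scheme applies, but now one of the pieces is a projective plane $P_j$, which by Lemma \ref{lem:take}(b) satisfies $GP_j \cap \Mmax = Gr_j$, a \emph{single} orbit; intersecting a set contained in $Gr_j$ with anything yields at most one orbit, giving the sharper conclusion. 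The reduction ``every orbit of the intersection lies in $\Mmax$'' is identical in all three cases: an orbit $Gp$ in $GS_i \cap GP_j$ (say) that is not in $\Mmax$ would have to contain $S_i$ or $P_j$ entirely, forcing $GS_i = Gp$ or $GP_j = Gp$ and contradicting $GS_i \ne GP_j$ (unless both equal $Gp$, again contradicting the hypothesis).

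I expect the main obstacle to be the careful handling of the case distinctions — in particular making sure that when an orbit $Gp$ in the intersection fails to be in $\Mmax$, the argument of Lemma \ref{lem:take} genuinely applies symmetrically to \emph{both} pieces, and that one correctly concludes $GS_{i_1} = Gp$ \emph{and} $GS_{i_2} = Gp$ rather than just one of them, so that the hypothesis $GS_{i_1} \ne GS_{i_2}$ is actually contradicted. One must also be slightly careful that $p$ being in $S_{i_1}$ after translating by some $g$ (i.e.\ $g^{-1}p \in S_{i_1}$) is what the Lemma \ref{lem:take} argument needs, and that $G$-invariance of the whole picture lets us move $p$ into $S_{i_1}$ without loss of generality while possibly needing a \emph{different} group element to move it into $S_{i_2}$; this is harmless because we only care about which orbits appear. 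Once these bookkeeping points are settled, the finiteness of $\Mmax$ and Lemma \ref{lem:take} do all the real work, and the proof is short.

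\begin{proof}
We prove (a); the proofs of (b) and (c) are entirely analogous, using part (b) of Lemma \ref{lem:take} for the pieces that are projective planes.

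If $GS_{i_1}$ or $GS_{i_2}$ is a single $G$-orbit, the claim is immediate (a single orbit is in $\Mmax$, and the intersection is then either empty or that orbit). So assume neither is a single orbit, and let $p \in GS_{i_1} \cap GS_{i_2}$. We claim $Gp \subset \Mmax$. Suppose not. After replacing $p$ by a suitable $G$-translate we may assume $p \in S_{i_1}$. As in the proof of Lemma \ref{lem:take}(a): since $Gp \not\subset \Mmax$, the point $p$ is not a $T$-fixed point, so (by the GKM property of the orbit $Gp$, Lemma \ref{lem:gkmcond}) it is not an isolated $T_{i_1}$-fixed point in $Gp$, and the connected component of $(Gp)^{T_{i_1}}$ through $p$ is $2$-dimensional. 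Since $S_{i_1}$ is the connected component of $M^{T_{i_1}}$ through $p$, we get $S_{i_1} \subset Gp$, hence $GS_{i_1} = Gp$. Running the same argument with a $G$-translate of $p$ lying in $S_{i_2}$ gives $GS_{i_2} = Gp$, so $GS_{i_1} = GS_{i_2}$, contradicting our hypothesis. Therefore every $G$-orbit contained in $GS_{i_1} \cap GS_{i_2}$ lies in $\Mmax$.

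By Lemma \ref{lem:take}(a), $GS_{i_1} \cap \Mmax = Gp_{i_1} \cup Gq_{i_1}$ and $GS_{i_2} \cap \Mmax = Gp_{i_2} \cup Gq_{i_2}$. Hence
\[
GS_{i_1} \cap GS_{i_2} \subset (Gp_{i_1} \cup Gq_{i_1}) \cap (Gp_{i_2} \cup Gq_{i_2}),
\]
which is a union of at most two $G$-orbits in $\Mmax$. This proves (a). For (b) and (c), the same argument shows that every orbit in the intersection lies in $\Mmax$; then Lemma \ref{lem:take}(b) gives $GP_{j} \cap \Mmax = Gr_{j}$, a single orbit, so the intersection is contained in a single $G$-orbit and is therefore empty or equal to that orbit.
\end{proof}
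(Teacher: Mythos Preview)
Your reduction is right: once $GS_{i_1}\cap GS_{i_2}\subset\Mmax$ is established, Lemma~\ref{lem:take} gives the orbit count exactly as you describe. The gap is in your argument for that inclusion. You invoke ``the GKM property of the orbit $Gp$, Lemma~\ref{lem:gkmcond}'' to conclude that the component of $(Gp)^{T_{i_1}}$ through $p$ is two-dimensional, hence equals $S_{i_1}$. But Lemma~\ref{lem:gkmcond} is a statement about the $T$-action on $\Mmax$; it applies to an orbit $Gp$ only when $Gp\subset\Mmax$, and you are assuming precisely the contrary. The proof of Lemma~\ref{lem:take} you are recycling uses $p\in\Mmax$ in an essential way. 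Without that hypothesis the two-dimensionality is false in general: since $p\notin\Mmax$, the torus $T_{i_1}=T_p$ is a \emph{maximal} torus of $G_p$, and a local calculation gives that the component of $(Gp)^{T_{i_1}}$ through $p$ has dimension $\dim C_G(T_{i_1})-(r-1)$, which equals $1$ whenever no root of $G$ vanishes on $\mft_{i_1}$ (in particular always if $G=T$). So the step $S_{i_1}\subset Gp$ is not justified.

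The paper bypasses the orbit $Gp$ and compares the subtori directly. From $p\in S_{i_1}\setminus\Mmax$ and $gp\in S_{i_2}\setminus\Mmax$ one has $T_p=T_{i_1}$ and $T_{gp}=T_{i_2}$; both $T_{i_2}$ and $gT_{i_1}g^{-1}$ are maximal tori in the rank-$(r-1)$ group $G_{gp}$, so (after possibly adjusting $g$ by an element of $G_{gp}$) one gets $gT_{i_1}g^{-1}=T_{i_2}$. Then $g$ carries $M^{T_{i_1}}$ onto $M^{T_{i_2}}$, and comparing connected components through $p$ and $gp$ yields $gS_{i_1}=S_{i_2}$, whence $GS_{i_1}=GS_{i_2}$, the desired contradiction.
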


\begin{proof} 
We only give the details of the proof for (a); the other two points can be proven by similar methods.  Because of Lemma \ref{lem:take}, we only need to show that $GS_{i_1}\cap GS_{i_2}$ is contained in $\Mmax$. 
Indeed, assume that there exists $p\in S_{i_1}$, $p\notin \Mmax$
such that $gp\in S_{i_2}$ for some $g\in G$. We have $T_p=T_{i_1}$ and $T_{gp}=T_{i_2}$, hence 
$T_{i_2}=T_{gp}= gT_pg^{-1}\cap T =gT_{i_1}g^{-1}\cap T\subset gT_{i_1}g^{-1}$,
which actually means $T_{i_2} =gT_{i_1}g^{-1}$. This implies that $M^{T_{i_1}}=g^{-1}M^{T_{i_2}}$.
The connected components of $p$ in the previous two spaces are
$S_{i_1}$, respectively $g^{-1}S_{i_2}$, hence $S_{i_1} = g^{-1}S_{i_2}$ and consequently
$GS_{i_1} = GS_{i_2}$, contradiction.
 \end{proof}

\begin{ex} It can happen already for torus actions that the intersection $GS_{i_1} \cap GS_{i_2}$ consists of two orbits (i.e., fixed points in this case). A simple example is given by the restriction of the action in Example \ref{ex:hp1} to a maximal torus of $\U(2)$.
\end{ex}

\subsection{$G$-orbits along $T$-transversal geodesics}
The results of this section are  important tools 
in studying the structure of $M_{r-1}$. They require the choice of a  
$G$-invariant Riemannian metric on $M$. We fix  $i\in I$ and observe that the sphere $S_i$ is $T$-invariant.
We consider an injective geodesic segment $\mu _i :[0,1]\to S_i$ which is $T$-transversal, i.e., perpendicular to each $T$-orbit it intersects,
and such that  $\mu _i(0)=p_i$ and $\mu _i(1)=q_i$. Since $S_i=T\mu _i([0,1])$, we have $GS_i =G\mu _i([0,1])$. We denote by $G_i$ the isotropy group of $\mu _i$, i.e., the group of elements that fix $\mu _i$ pointwise. Clearly, $G_i\subset G_{\mu _i(t)}$ for all $t$.

We assume that $GS_i\neq Gp_i$. Recall that $S_i$ is a component of the
fixed point set of a certain codimension-one subtorus $T_i\subset T$.
By Lemma \ref{lem:take}, the torus $T_i$ is maximal in both $G_i$ and $G_{\mu _i(t)}$ for all $0<t<1$; we may therefore form the Weyl groups $W(G_i)$ and  $W(G_{\mu _i(t)})$ (for $0<t<1$) with respect to $T_i$. As $W(G_i)\subset W(G_{\mu_i(t)})$ for such $t$, we can consider
\[
H^*_G(G\mu _i(t)) = H^*(BG_{\mu _i(t)})=H^*(BT_i)^{W(G_{\mu _i(t)})}
\]
as a subring of
\[ H^*(BG_i)=H^*(BT_i)^{W(G_i)}.
\]

\begin{lem}\label{gsi} Assume that $GS_i\neq Gp_i$. Then one of the following two cases holds:
\begin{enumerate}
\item[(a)] For all $0<t<1$, we have $G\mu _i(t)\cap S_i = T\mu _i(t)$. In this case, $W(G_{\mu _i(t)})=W(G_i)$ and $H^*(BG_{\mu _i(t)})=H^*(BG_i)$.
\item[(b)] For all $0<t<1$, we have $G\mu _i(t)\cap S_i = T\mu _i(t)\cup T\mu _i(1-t)$. Then there exists $g_i\in W(G_{\mu _i(\frac12)})\setminus W(G_{i})$ such that $g_i\mu _i(t)=\mu _i(1-t)$. In particular, $Gp_i=Gq_i$. For $0<t<1$, $t\neq \frac12$ we have $ W(G_{\mu _i(t)})=W(G_i)$ and $ H^*(BG_{\mu _i(t)})=H^*(BG_i)$.
\end{enumerate}
\end{lem}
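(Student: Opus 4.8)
The strategy is to analyze the orbit $G\mu_i(t)$ for a fixed generic $t\in(0,1)$ using that it is a compact homogeneous space $G/G_{\mu_i(t)}$ of maximal rank, on which $T$ acts in GKM fashion (Lemma~\ref{lem:gkmcond}), so that $\mu_i(t)$ lies in some connected component of $(G\mu_i(t))^{T_i}$. The key point established in the proof of Lemma~\ref{lem:take}(a) is that this component cannot be positive-dimensional — otherwise it would be a $2$-sphere or $\RR P^2$ contained in $S_i = $ (the component of $M^{T_i}$ through $\mu_i(t)$), forcing $S_i\subset G\mu_i(t)$ and hence $GS_i=G\mu_i(t)=Gp_i$, contrary to the standing assumption. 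Therefore $\mu_i(t)$ is an \emph{isolated} point of $(G\mu_i(t))^{T_i}$, equivalently a $T$-fixed point of $G\mu_i(t)$; so $G\mu_i(t)\cap M^T$ is exactly the $T$-fixed-point set of the homogeneous space $G\mu_i(t)=G/G_{\mu_i(t)}$, which is the finite set identified with $W(G_{\mu_i(t)})/W(G_i)$ under the identification of $G\mu_i(t)$ with the $G$-adjoint orbit through a suitable element (using $T_i\subset G_i\subset G_{\mu_i(t)}$ all of maximal rank).

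Next I would locate $G\mu_i(t)\cap S_i$ inside $G\mu_i(t)\cap M^T$. Since $S_i$ is the full component of $M^{T_i}$ through $\mu_i(t)$ and since $G\mu_i(t)$ meets $M^{T_i}$ only in isolated (hence $T$-fixed) points, every point of $G\mu_i(t)\cap S_i$ is $T$-fixed and lies on the geodesic segment... more precisely, one uses that $T$-transversality of $\mu_i$ together with $S_i=T\mu_i([0,1])$ forces any $T$-fixed point of $G\mu_i(t)$ lying in $S_i$ to be one of the points $\mu_i(s)$ for $s$ in the $T$-orbit structure; a $T$-fixed point on $S_i$ is either $p_i=\mu_i(0)$ or $q_i=\mu_i(1)$, but those are not in $G\mu_i(t)$ for generic $t$ unless $Gp_i=G\mu_i(t)$, contradiction — so in fact I should instead argue directly on the slice: the points of $G\mu_i(t)\cap S_i$ are exactly the points $g\mu_i(t)$ with $g\mu_i(t)\in S_i$, and $T$-transversality plus the fact that $\mu_i$ is a length-minimizing segment in the $2$-sphere $S_i$ shows each such point is $T\mu_i(t)$ or $T\mu_i(1-t)$ (these are the only two $T$-orbits in $S_i$ at the same distance from the orbit stratification as $T\mu_i(t)$). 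This dichotomy — exactly one, or exactly two, such orbits, uniformly in $t$ — is what gives cases (a) and (b). In case (b), picking $g_i\in G$ with $g_i\mu_i(t)=\mu_i(1-t)$ and normalizing at $t=\tfrac12$ (where $\mu_i(\tfrac12)$ is fixed, so $g_i$ normalizes $T_i$ and descends to $W(G_{\mu_i(\frac12)})$), one gets $g_i\in W(G_{\mu_i(1/2)})\setminus W(G_i)$ (it is not in $W(G_i)$ since it moves $\mu_i(t)$); applying $g_i$ to $\mu_i(0)=p_i$ gives $g_i p_i = q_i$, whence $Gp_i=Gq_i$.

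Finally, for the cohomology statements: the count of $T$-fixed points in $G\mu_i(t)$ intersected with $S_i$ is $[W(G_{\mu_i(t)}):W(G_i)]$ restricted to the $T_i$-component through $\mu_i(t)$, but more cleanly, $G\mu_i(t)\cap S_i$ being a single $T$-orbit in case (a) means $G_{\mu_i(t)}\cap N_G(T_i)$ maps onto $W(G_i)$, i.e.\ $W(G_{\mu_i(t)})=W(G_i)$ for the Weyl groups taken with respect to the maximal torus $T_i$; equivalently, $G\mu_i(t)$ contains no more $T$-fixed points than $Gp_i$ does near $S_i$. I would make this precise by noting $\mu_i(t)\mapsto$ (the number of $T_i$-transversal $G$-translates landing back in $S_i$) equals $|W(G_{\mu_i(t)})/W(G_i)|$, which is $1$ in case (a) and $2$ in case (b) (for $t\neq\tfrac12$). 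Then $H^*(BG_{\mu_i(t)})=H^*(BT_i)^{W(G_{\mu_i(t)})}=H^*(BT_i)^{W(G_i)}=H^*(BG_i)$ follows. \textbf{The main obstacle} I anticipate is the uniformity in $t$: showing that the intersection pattern $G\mu_i(t)\cap S_i$ is the \emph{same} (one orbit, or two symmetric orbits) for all $t\in(0,1)$ simultaneously, rather than possibly jumping. This should follow from a continuity/connectedness argument — the set of $t$ for which $W(G_{\mu_i(t)})$ is a fixed subgroup is both open and closed in $(0,1)$ since isotropy groups are upper semicontinuous and $T_i$-maximality is open — combined with the isometry $g_i$ in case (b), but it needs to be spelled out carefully, and it is also where the hypothesis that each $G\mu_i(t)$ is GKM (so that fixed-point sets are discrete) is doing the real work.
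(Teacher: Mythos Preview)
Your proposal has a genuine gap at the very first step. You claim that $G\mu_i(t)\cong G/G_{\mu_i(t)}$ is a homogeneous space ``of maximal rank, on which $T$ acts in GKM fashion (Lemma~\ref{lem:gkmcond})''. But for $0<t<1$ we have $\rk G_{\mu_i(t)}=r-1$ (this is precisely the content of Lemma~\ref{lem:take} under the hypothesis $GS_i\neq Gp_i$), so the orbit is \emph{not} of maximal rank, and Lemma~\ref{lem:gkmcond} --- which concerns only $\Mmax$ --- does not apply. Consequently there is no reason for $\mu_i(t)$ to be an isolated $T_i$-fixed point in $G\mu_i(t)$, and your Weyl-group-index counting argument $|W(G_{\mu_i(t)})/W(G_i)|=\#\{\text{$T$-orbits in }G\mu_i(t)\cap S_i\}$ is not justified. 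Your distance argument for why the extra intersection point must lie on $T\mu_i(1-t)$ is also incomplete: $g$ is a global isometry of $M$, but you have not shown it preserves $S_i$, so there is no a priori metric relation inside $S_i$ between $\mu_i(t)$ and $g\mu_i(t)$.

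The paper's proof avoids all of this by a single geometric observation that you are missing. Given $h\in G$ with $h\mu_i(t_0)=\mu_i(t_1)\in S_i$, one first replaces $h$ by $g_i:=kh$ where $k\in G_{\mu_i(t_1)}$ conjugates $hT_ih^{-1}$ back to $T_i$ (both are maximal tori of $G_{\mu_i(t_1)}$). Now $g_i$ normalizes $T_i$, hence preserves the component $S_i$ of $M^{T_i}$; being an isometry, it sends the $T$-transversal geodesic $\mu_i$ to another $T$-transversal geodesic in $S_i$. The only such geodesics are reparametrizations $s\mapsto \mu_i(s+c)$ or $s\mapsto \mu_i(c-s)$, and the former is excluded since $g_i\mu_i(0)\in\Mmax$ would force $c=0$. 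Matching endpoints gives $c=1$, i.e.\ $g_i\mu_i(s)=\mu_i(1-s)$ for \emph{all} $s$. This one element simultaneously establishes the dichotomy, the uniformity in $t$ (your acknowledged ``main obstacle''), the relation $Gp_i=Gq_i$, and the Weyl-group statement: any $w\in W(G_{\mu_i(t)})\setminus W(G_i)$ must, by the same reasoning, reflect $\mu_i$ about $\mu_i(t)$, which is only possible at $t=\tfrac12$. The normalization of $T_i$ is thus the \emph{starting} maneuver, not a consequence of fixing $\mu_i(\tfrac12)$ as you suggest.
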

\begin{proof}
Assume that there exists $0<t_0<1$ such that $G\mu _i(t_0)\cap S_i$ is strictly larger than $T\mu _i(t_0)$, i.e., we are not in case (a). Then there exists $h\in G$ such that $h\mu _i(t_0)=\mu _i(t_1)$ for some $t_1\neq t_0$, $t_1\in (0,1)$. We may assume $t_0<t_1$. Both $T_i$ and $hT_ih^{-1}$ are maximal tori in $G_{\mu _i(t_1)} = hG_{\mu _i(t_0)}h^{-1}$, so there exists $k\in G_{\mu _i(t_1)}$ such that $khT_i(kh)^{-1}=T_i$. Define $g_i:=kh$. Because $g_i$ normalizes $T_i$ and sends $\mu _i(t_0)$ to $\mu _i(t_1)$, it maps $S_i$ to itself. Therefore, the curve $g_i\mu _i$ is contained in $S_i$ and is (because $g_i$ acts as an isometry) perpendicular to the $T$-orbits in $S_i$. Thus, there are only two possibilities: either $g_i\mu _i(t) = \mu _i(t+t_1-t_0)$ for all $t$ or $g_i\mu _i(t)=\mu _i(t_0+t_1-t)$ for all $t$. The first case is impossible because $\mu _i(t_1-t_0)$ is not contained in $\Mmax$. Therefore, $g_i\mu _i(t)=\mu _i(t_0+t_1-t)$. This implies that $\mu _i(t_0+t_1)=q_i$, so $t_0+t_1=1$, i.e., $g_i\mu _i(t)=\mu _i(1-t)$ for all $t$; thus, the first statement in case (b) is true.

It remains to show that in case (a), $W(G_i)=W(G_{\mu _i(t)})$ for $0<t<1$ and the same equality holds in case (b) for $0<t<1$, $t \neq \frac12$. But any element in 
$W(G_{\mu _i(t)})$ that does not fix $\mu _i$ pointwise has to reflect the geodesic $\mu_i$ in the point $\mu_i(t)$ by the same arguments as above, which is only possible for $t=\frac12$ in case (b).
\end{proof}

Let us now take  $j\in J$ and consider the real projective plane $P_j$, which is
a component of $M^{T_j}$, $T_j$ being a certain codimension-one subtorus 
of $T$. We also consider an injective geodesic segment $\eta  _j :[0,1]\to P_j$ which is $T$-transversal and  such that 
$\eta  _j(0)=r_j$ and $T\eta  _j(1)$ is an exceptional $T$-orbit. Then $\eta  _j([0,1])$ intersects each orbit of the $T$-action on $P_j$ exactly once. We denote by $G_j$ the subgroup of $G$ which fixes $\eta  _j$ pointwise.

We assume now that $GP_j\neq Gr_j$. By Lemma \ref{lem:take}, the torus $T_j$ is maximal in both $G_j$ and $G_{\eta  _j(t)}$ for $0<t\leq 1$. For such $t$, we have $W(G_j)\subset W(G_{\eta  _j(t)})$, and therefore
\[
H^*_G(G\eta  _j(t))=H^*(BG_{\eta  _j(t)}) = H^*(BT_j)^{W(G_{\eta  _j(t)})}
\]
is a subring of 
\[
H^*(BG_j)=H^*(BT_j)^{W(G_j)}.
\]
With the methods of the previous lemma, we can show as follows:

\begin{lem}\label{gsii} Assume that $GP_j\neq Gr_j$. Then for all  $0<t<1$ we have $G\eta  _j(t)\cap P_j =T\eta  _j(t)$, $W(G_{\eta  _j(t)})=W(G_j)$, and $ H^*(BG_{\eta  _j(t)})=H^*(BG_j)$.
\end{lem}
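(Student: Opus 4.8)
The plan is to mimic closely the proof of Lemma \ref{gsi}, exploiting that $P_j$, like $S_i$, is a cohomogeneity-one $T$-manifold with a distinguished $T$-transversal geodesic $\eta_j$, the difference being that $\eta_j$ runs from a $T$-fixed point $r_j=\eta_j(0)$ to an exceptional orbit $T\eta_j(1)$, and that $\eta_j$ meets each $T$-orbit in $P_j$ exactly once. First I would suppose, for contradiction, that for some $0<t_0<1$ the intersection $G\eta_j(t_0)\cap P_j$ is strictly larger than $T\eta_j(t_0)$. Since $\eta_j([0,1])$ is a fundamental domain for the $T$-action on $P_j$, this produces $t_1\neq t_0$ in $(0,1)$ and $h\in G$ with $h\eta_j(t_0)=\eta_j(t_1)$. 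As in Lemma \ref{gsi}, $T_j$ and $hT_jh^{-1}$ are both maximal tori in $G_{\eta_j(t_1)}=hG_{\eta_j(t_0)}h^{-1}$ (using Lemma \ref{lem:take}(b), which guarantees $T_j$ is maximal in $G_{\eta_j(t)}$ for $0<t\le 1$), so after correcting $h$ by an element $k\in G_{\eta_j(t_1)}$ we obtain $g_j:=kh$ normalizing $T_j$ with $g_j\eta_j(t_0)=\eta_j(t_1)$; hence $g_j$ preserves $P_j=$ (the component of $M^{T_j}$ through $\eta_j(t_0)$) and acts on it as an isometry permuting the $T$-orbits.

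The key difference — and the step that needs genuine care rather than routine copying — is ruling out the reflection alternative. On the $2$-sphere case both a rotation and a reflection of the geodesic were a priori possible, and the reflection alternative was the one that survived. On $\RR P^2$, however, the standard $S^1$-action has orbit space $[0,1]$ with $\eta_j(0)=r_j$ the fixed point and $T\eta_j(1)$ the exceptional orbit; an isometry of $P_j$ carrying $T$-orbits to $T$-orbits induces a homeomorphism of this orbit space $[0,1]$ fixing the two endpoints individually (the fixed point must go to the fixed point, the exceptional orbit to the exceptional orbit), so it must be the identity on the orbit space. Therefore $g_j\eta_j(t)$ and $\eta_j(t)$ lie on the same $T$-orbit for every $t$; since both $g_j\eta_j$ and $\eta_j$ are $T$-transversal geodesics (images of geodesics under the isometry $g_j$) and $\eta_j$ meets each $T$-orbit exactly once, we get $g_j\eta_j(t)=\eta_j(t)$ for all $t$, so in particular $t_1=t_0$, contradicting $t_1\neq t_0$. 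This establishes $G\eta_j(t)\cap P_j=T\eta_j(t)$ for all $0<t<1$.

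Finally, for the statement about Weyl groups: any element of $W(G_{\eta_j(t)})$ (with respect to $T_j$), being induced by an element of $N_{G_{\eta_j(t)}}(T_j)$, acts on $P_j$ as an isometry preserving $T$-orbits, hence as just shown it fixes the orbit space $[0,1]$ pointwise and therefore fixes $\eta_j$ pointwise; thus it lies in $W(G_j)$. Combined with the inclusion $W(G_j)\subset W(G_{\eta_j(t)})$ noted before the lemma, we get $W(G_{\eta_j(t)})=W(G_j)$, and consequently $H^*(BG_{\eta_j(t)})=H^*(BT_j)^{W(G_{\eta_j(t)})}=H^*(BT_j)^{W(G_j)}=H^*(BG_j)$. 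The main obstacle, as indicated, is the orbit-space argument for $\RR P^2$: one must verify that the two boundary points of the cohomogeneity-one quotient (coming from a genuine fixed point versus from the exceptional orbit with isotropy $\ZZ_2$) are topologically distinguishable, so that no "reflection" of $\eta_j$ is possible — this is exactly what forces case (b) of Lemma \ref{gsi} to have no analogue here and makes the conclusion cleaner than in the sphere case.
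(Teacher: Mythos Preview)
Your proposal is correct and is exactly the argument the paper has in mind (the paper gives no separate proof, only the remark that the methods of Lemma~\ref{gsi} apply); you have correctly isolated the one genuinely new point, namely that the reflection alternative of Lemma~\ref{gsi}(b) cannot occur here because the two ends of $P_j/T\cong[0,1]$ --- a true fixed point versus an exceptional $\ZZ_2$-orbit --- cannot be interchanged.

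The only place worth tightening is the inference ``fixes the orbit space pointwise $\Rightarrow$ fixes $\eta_j$ pointwise'' in the Weyl-group step: knowing $g\eta_j(s)\in T\eta_j(s)$ for all $s$ is not literally $g\eta_j(s)=\eta_j(s)$. This is closed exactly as in Lemma~\ref{gsi}: since $g$ fixes $\eta_j(t)$ and the $T$-transversal direction in $T_{\eta_j(t)}P_j$ is one-dimensional, one has $g\eta_j'(t)=\pm\eta_j'(t)$, hence either $g\eta_j=\eta_j$ or $g\eta_j(s)=\eta_j(2t-s)$; the second alternative would give $gr_j=\eta_j(2t)\neq r_j$ for $t\in(0,1)$, so $g\eta_j=\eta_j$ and $g\in G_j$ as required.
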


 \begin{rem}\label{rem:compnotsmooth} Lemmata \ref{gsi} and \ref{gsii} show that if the spaces $GS_i$ or $GP_j$ are smooth (and not a single $G$-orbit),
 then they become cohomogeneity-one manifolds relative to the $G$-action. There are situations 
 when this is not the case. Let us consider for instance the canonical action of $H$ on $K/H$, where
 the latter is an irreducible symmetric space of rank at least two and satisfies $\rk K = \rk H$ (take for instance the Grassmannian $\SU(4)/{\rm S}(\U(2)\times \U(2))$).     
 This action satisfies the non-abelian GKM hypotheses, by \cite{GHZ} and Lemma \ref{ift} above. Let $T\subset H$ be a maximal torus.
 The coset $eH$ is a fixed point of the $T$-action and we consider
 a two-dimensional submanifold $R$ in the 1-skeleton of the $T$-action which contains it. We claim that the space $HR$ is not  smooth.
 Indeed, if it were, then the $H$-action on it would have cohomogeneity-one with orbit space $[0,1]$ and
 $eH$ would be a singular orbit. From the slice theorem, $H$ then acts transitively on the unit sphere in $T_{eH}HR$.
 Thus $T_{eH}HR$ is an irreducible $H$-submodule of $T_{eH}K/H$. Since the symmetric space is irreducible,
 we deduce that $T_{eH}HR=T_{eH}K/H$ and conclude that $H$ acts transitively on the unit sphere in
 $T_{eH}K/H$: this is in contradiction with the cohomogeneity of the isotropy representation of
 $H$ on $K/H$ at $eH$ being $\rk K/H$, which was assumed to be at least two.
 
 Another concrete situation when $GS_i$ is not smooth will be presented
in Examples \ref{typec} and \ref{typecc}.
 \end{rem}

\section{Non-abelian GKM theory}\label{sec:non-abelian}

In this section we accomplish the goal mentioned at the beginning of the previous section, that is, we identify the image of the (injective) map $H^*_G(M_{r-1})
\to H^*_G(\Mmax)$ in the presence of the non-abelian GKM conditions.
The main results are Propositions \ref{lem:a} and \ref{prop:b};
for a more systematic presentation, see Section 
\ref{sec:non-abelianGKMgraphs},  especially Theorem  \ref{via}.

We would like to emphasize that, in contrast with the previous section, 
all considerations made here will be independent of the choice of a maximal
torus $T$. 
The first step will be an intrinsic description   of the non-abelian 1-skeleton $M_{r-1}$,
which is going to be presented in the following subsection.

\subsection{The components of the 1-skeleton} 
Consider the connected components of
$M_{r-1}\setminus \Mmax$ and their  closures: these will be called
the {\it components of the 1-skeleton}. 
Note that for any maximal torus $T\subset G$, these are nothing but the 
associated spaces $GS_i$ and $GP_j$ from the previous section which are not 
single $G$-orbits. We summarize some of the results concerning them which were obtained  in the previous section:
 
 \begin{itemize}
 \item Their union is $M_{r-1}$.
 \item Each of them is $G$-invariant and contains one or two $G$-orbits in $\Mmax$.
 \item The intersection of any two of them is contained in $\Mmax$ (possibly empty).
\item The $G$-orbit space of  any of them  is homeomorphic to a closed interval or a circle.
 \end{itemize}

The last statement follows from Lemmata \ref{gsi} and \ref{gsii}.
In fact, if $N$ is a component, then it fits into one of the following three
cases, see also
items (I)-(III) in the introduction:

\begin{itemize}
\item[(I)] $N/G\simeq [0,1]$ and $N$ contains exactly two orbits in $\Mmax$,
which correspond to $0$ and $1$.
\item[(II)] $N/G\simeq [0,1]$ and $N$ contains exactly one orbit in $\Mmax$,
which corresponds to $0$.
\item[(III)] $N/G\simeq \RR/\ZZ=S^1$ and $N$ contains exactly one orbit in $\Mmax$.
\end{itemize}

By fixing a maximal torus $T\subset G$ like in the previous section, the
components are of the form $GS_i$, $i\in I$ or $GP_j$, $j\in J$.
More precisely, if $S_i$ is like in Lemma \ref{gsi} (a) and $Gp_i \neq Gq_i$,
then $GS_i$ is of type (I), whereas if $Gp_i = Gq_i$, then 
$GS_i$ is of type (III); if $S_i$ is like   in Lemma \ref{gsi} (b),
then $GS_i$ is of type (II). Also of type (II) is any component of the form $GP_j$
like in Lemma \ref{gsii}. 

We can illustrate these situations by concrete examples.
Of type (I) are: any 2-sphere in the 1-skeleton of a GKM torus action
as well as the spaces $GS_1$ and $GS_2$ to be mentioned in  
Example \ref{sp22} below. 
In Example \ref{ex:gras}, the two $\RR P^2$'s in the 1-skeleton are
of type (II); of the same type is the component $GS$ in Example \ref{ex:hp1} (and  \ref{u2} below).
We will now give an example of a component of type (III).
(Note that, by Remark \ref{rem:compnotsmooth} and the general theory of cohomogeneity-one actions, a type (III) component is not smooth.)

\begin{ex}\label{typec}
 Consider the compact, connected, and simply connected  Lie group of type $G_2$ along with $M$, the adjoint orbit
of the point $p$ in Figure \ref{fig3}. That diagram  represents the roots of our  group,
which lie in the Lie algebra $\mft$ of a maximal torus  $T$.
 \begin{figure}[htb]
 \includegraphics[width=7.5cm]{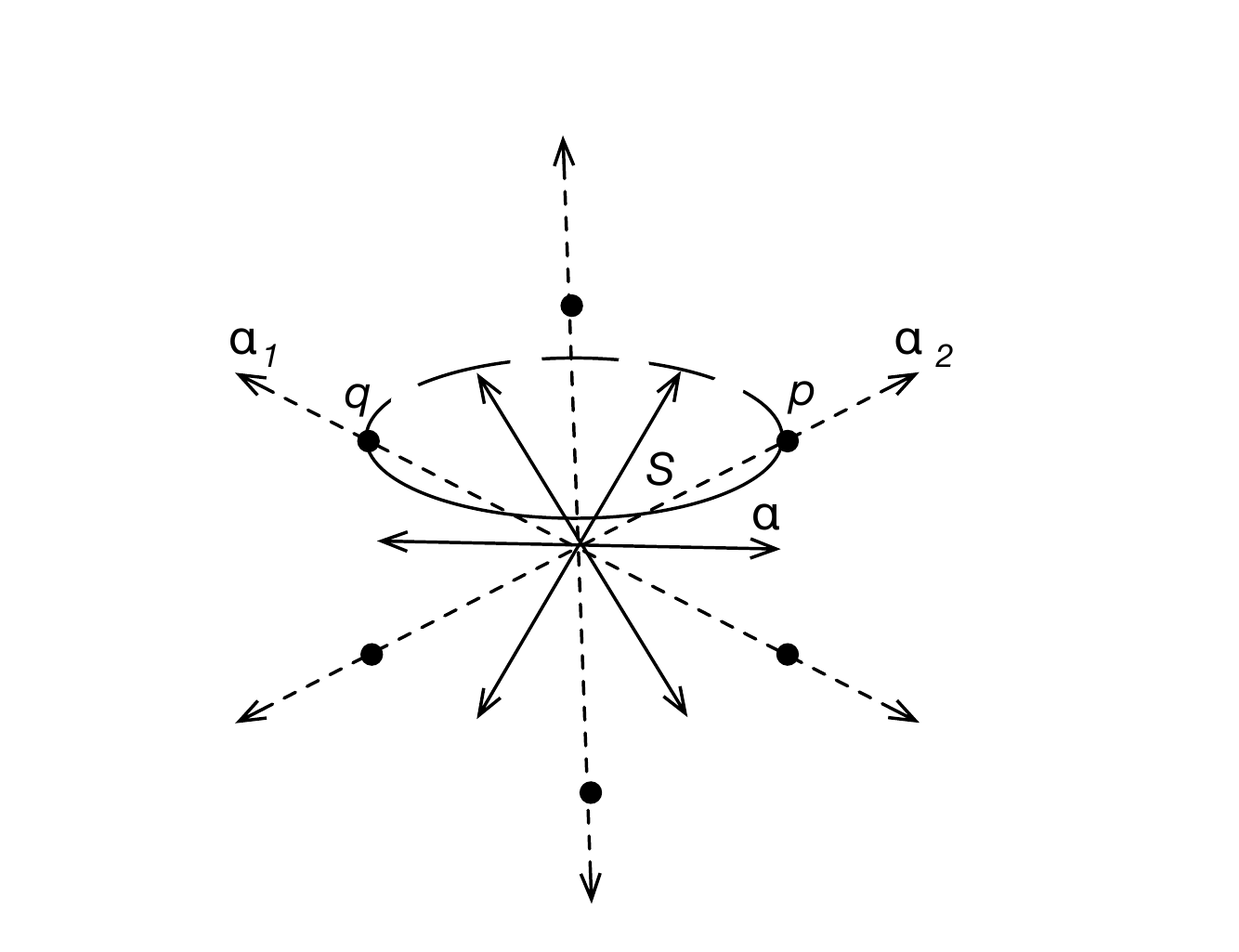}
 \caption{}
 \label{fig3}
\end{figure}
Denote by $K$
the connected subgroup which contains $T$ and whose roots are the vectors
represented by dotted lines in the diagram. Note that $K$ is isomorphic to $\SU(3)$.
The relevant action for us is the one of $K$ on $M$.
It satisfies the GKM conditions, see Example \ref{examp}.
The $T$-fixed points of $M$ are the six dots in the diagram: they turn out to be
 the orbit of $p$ under the Weyl group $W(K)$.
Especially important for us are $p$ and $q:=s_\alpha(p)$
(in general, if $\gamma$ is a root, then $s_\gamma$ is the
reflection in the line $\gamma^\perp$).
There is a  $T$-invariant 2-sphere in $M$ between $p$ and $q$, which we denote by 
$S$. It is not contained in $Kp$, because $Kp$ is a $K$-adjoint orbit
and $\alpha$ is not a root of $K$. But $p$ and $q$ are in the same $W(K)$-orbit,
hence we have $Kp=Kq$. We will now show that $S$ satisfies the
conditions in Lemma \ref{gsi} (a): this implies immediately that $KS/K\simeq S^1$.  
Concretely, if $\mu: [0,1]\to M$ is a $T$-transversal injective geodesic segment
 in $S$ with $\mu(0)=p$ and $\mu(1)=q$, then
 $K\mu(t) \cap S =T\mu(t)$, for all $0<t<1$.
Indeed, let us assume that the contrary is true. Like in the proof of Lemma \ref{gsi}, this implies that there exists
$k \in K$ such that $k \mu(t)=\mu(1-t)$ for all $t\in (0,1)$.
In particular, $kp=q$ and $kq=p$.
Since $p\in \mft $ is a regular point for the adjoint action of 
$K$,
 the $K$-stabilizer of $p$ is equal to $T$.
Hence $kp=q$ implies readily that $k\in N_K(T)$.
Finally observe, by inspection, that for
any of the six elements $w$ of  $W(K)$, we have either $wp\neq q$ or $wq\neq p$. See Example \ref{typecc} for a continuation of this example.
\end{ex}

The following notion will play an important role:

\begin{dfn} Let $N$ be a component of the non-abelian
1-skeleton. An injective geodesic segment $\gamma : [0,1] \to M$ is {\it adapted} to
$N$ if:
\begin{itemize}
\item the trace of $\gamma$ is contained in $N$ 
\item $\gamma(0) \in \Mmax$
\item $\gamma$ is $G$-transversal, i.e., it is perpendicular to any $G$-orbit it meets at the intersection point
\item if $N$ is of type (I) or (II), then   the map $[0,1]\to N/G$ given by $t\mapsto G\gamma(t)$ is a bijection;
if $N$ is of type (III), then the map $[0,1]/_{0\sim 1}\cong S^1 \to N/G$ given by $t\mapsto G\gamma(t)$
is well defined and bijective.
\end{itemize}
\end{dfn}

Assume that $T\subset G$ is fixed and $N$ is of the form $GS_i$ or $GP_j$  like in the previous section. In the situations described by Lemma \ref{gsi} (a), 
respectively Lemma \ref{gsii}, 
 $\mu_i$ is adapted to $GS_i$ and $\eta_j$ is adapted to $GP_j$; for $S_i$  like in Lemma \ref{gsi} (b), a geodesic segment adapted to $GS_i$ is
$s\mapsto \mu_i(s/2)$, $0\le s\le 1$. Any such curve remains an adapted geodesic segment after translations by elements of $G$ or, if $N$ is of type (I), the reparametrization induced by $s\mapsto 1-s$, $s\in [0,1]$. The following   converse
result  holds:

\begin{lem}\label{lem:adapted} Let $N$ be a component of the non-abelian 1-skeleton. Then there exists a geodesic segment adapted to $N$; it is unique up to $G$-translations and, if $N$ is of type (I),
 the reparametrization induced by $s\mapsto 1-s$, $s\in [0,1]$.
\end{lem}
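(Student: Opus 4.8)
The plan is to fix a maximal torus $T \subset G$ and reduce the problem to the concrete descriptions of the components $GS_i$ and $GP_j$ obtained in Section~\ref{sec:non-abeliansk}. Given a component $N$ of the non-abelian $1$-skeleton, by the discussion following Lemma~\ref{lem:adapted}'s setup we know $N = GS_i$ for some $i \in I$ or $N = GP_j$ for some $j \in J$ (with $N$ not a single orbit). I would first treat existence. If $N = GP_j$, then by Lemma~\ref{gsii} the $T$-transversal geodesic segment $\eta_j$ from $r_j$ to the exceptional orbit is already $G$-transversal in the required sense (since $G\eta_j(t) \cap P_j = T\eta_j(t)$ forces the $G$-orbit through $\eta_j(t)$ to meet $P_j$ in exactly one $T$-orbit, and $P_j/T \cong [0,1]$), so $\eta_j$ is adapted. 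If $N = GS_i$ with $S_i$ as in Lemma~\ref{gsi}(a) and $Gp_i \neq Gq_i$ (type I), the same reasoning makes $\mu_i$ adapted; if $Gp_i = Gq_i$ (type III), then $\mu_i$ with $\mu_i(0) = p_i$, $\mu_i(1) = q_i$ descends to a bijection $S^1 \to GS_i/G$ because $\mu_i(0)$ and $\mu_i(1)$ lie on the same $G$-orbit and each interior orbit meets $S_i$ in a single $T$-orbit. Finally, if $S_i$ is as in Lemma~\ref{gsi}(b) (type II), the reparametrized curve $s \mapsto \mu_i(s/2)$, $s \in [0,1]$, traces out exactly one $T$-orbit per $G$-orbit of $GS_i$ (since $G\mu_i(t) \cap S_i = T\mu_i(t) \cup T\mu_i(1-t)$ identifies the parameter values $t$ and $1-t$), and it starts in $\Mmax$; so it is adapted.

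For the $G$-transversality condition in each case I would note that a $T$-transversal geodesic in a component of $M^{T_i}$ (or $M^{T_j}$) is automatically $G$-transversal: at a point $x = \gamma(t)$ the tangent space to the $G$-orbit $Gx$ decomposes, using the slice/isotropy structure and the fact that $\gamma'(t) \in T_xS_i$, into the tangent to the $T$-orbit plus directions normal to $S_i$; the first is perpendicular to $\gamma'(t)$ by $T$-transversality inside $S_i$, and the second is perpendicular to $\gamma'(t)$ because $\gamma'(t)$ is tangent to $S_i$ and the metric is $G$-invariant. This is the one genuinely geometric point and I expect it to require a careful but short argument with the $G$-invariant metric; it is the main obstacle, though a mild one.

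For uniqueness, suppose $\gamma, \gamma'$ are both adapted to $N$. Their starting points $\gamma(0), \gamma'(0)$ lie in $\Mmax$; in cases (I) and (II) one checks they lie on the orbit corresponding to $0 \in N/G$, so after translating $\gamma'$ by an element of $G$ we may assume $\gamma(0) = \gamma'(0) =: p_0$. In case (III) the endpoint orbit is the unique one in $\Mmax$, and after a $G$-translation we may again assume $\gamma(0) = \gamma'(0)$, possibly after the (orientation-flipping) reparametrization, but in type (III) the curve closes up so no separate flip is needed—I would be careful here and note that a circle has an orientation-reversing self-homeomorphism, but since we only claim uniqueness up to $G$-translation in type (III), and the two orientations of the loop are related by such a translation when $Gp_i = Gq_i$ (the element $g$ with $gp_i = q_i$ swaps the two ends), this is consistent; alternatively one simply states the uniqueness claim for type (III) exactly as in the lemma. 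With $\gamma(0) = \gamma'(0) = p_0$, both $\gamma'(0)$ and $\gamma''^{\,}(0)$—meaning the initial velocity vectors—are unit vectors in the normal space $\nu_{p_0}(Gp_0)$ pointing ``into'' $N$. Since $N/G$ near $p_0$ is a half-line (or, in type (III), still locally a half-line as seen from $p_0$) and both geodesics are unit-speed $G$-transversal parametrizations of it, their initial velocities must lie in the same $G_{p_0}$-orbit inside $\nu_{p_0}(Gp_0)$ (the slice representation acts transitively on the relevant set of inward unit normals because the slice through $p_0$ meets $N$ in a single geodesic ray up to this isotropy action). Hence after a further translation by an element of $G_{p_0}$ we get $\gamma'(0) = \gamma''^{\,}(0)$, and then $\gamma = \gamma'$ by uniqueness of geodesics with given initial data. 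The flip $s \mapsto 1-s$ accounts precisely for the ambiguity in which of the two boundary orbits of a type (I) component is labelled $0$.

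Putting these together: existence is handled case-by-case via Lemmata~\ref{gsi} and~\ref{gsii}, $G$-transversality follows from $T$-transversality plus $G$-invariance of the metric, and uniqueness follows from uniqueness of geodesics once one knows that inward unit normals to $Gp_0$ inside the slice that point into $N$ form a single $G_{p_0}$-orbit. I would present the argument in this order, spending most of the care on the transversality step and on the identification of the starting orbit.
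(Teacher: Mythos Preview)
Your existence argument is essentially what the paper does (the paper in fact takes existence as already established by the sentences preceding the lemma), and your justification of $G$-transversality from $T$-transversality is correct: the key point is that $S_i$ is the $T_i$-fixed component, so $T_x(Gx)$, being $T_i$-invariant, splits as $T_x(Tx)\oplus\bigl(T_x(Gx)\cap (T_xS_i)^\perp\bigr)$, and $\gamma'(t)\in T_xS_i$ is orthogonal to both summands.

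The genuine divergence from the paper is in the uniqueness argument, and there your proof has a gap. You work at the endpoint $p_0\in\Mmax$ and assert that the initial velocities of two adapted geodesics lie in the same $G_{p_0}$-orbit, justifying this only by ``the slice through $p_0$ meets $N$ in a single geodesic ray up to this isotropy action.'' That sentence is precisely the claim to be proved, not a proof of it. At $p_0$ there are in general several two-dimensional pieces $S_i$ passing through $p_0$ (one for each weight of the isotropy representation), and showing that those with $GS_i=N$ are all $G_{p_0}$-translates of one another is real work; it amounts to showing that if $GS_i=GS_{i'}$ and $p_0\in S_i\cap S_{i'}$ then $S_{i'}=gS_i$ for some $g\in G_{p_0}$, which requires the case analysis of Lemma~\ref{gsi}. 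The paper sidesteps this by arguing at a point $p$ with $\rk G_p=r-1$: there the maximal torus of $G_p$ is unique up to $G_p$-conjugacy, so up to a $G_p$-translate the adapted geodesic is forced into the single two-dimensional piece $R$ through $p$, and inside $R$ the direction normal to $Gp$ is at most one-dimensional. This is both shorter and avoids the circularity.

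There is also an error in your treatment of type~(III). You claim that the orientation-reversing reparametrisation $s\mapsto 1-s$ is realised by a $G$-translation because $Gp_i=Gq_i$. This is false: for any $h\in G$ one has $G\,h\gamma(s)=G\gamma(s)$, so $h\gamma$ traces out $N/G\cong S^1$ with the \emph{same} orientation as $\gamma$, whereas $s\mapsto\gamma(1-s)$ reverses it. Indeed, an $h$ with $h\gamma(s)=\gamma(1-s)$ would put $S_i$ into case~(b) of Lemma~\ref{gsi}, contradicting type~(III). So the flip is \emph{not} a $G$-translate in type~(III), and your discussion of this case needs revision.
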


\begin{proof} 
Let $p\in N$ be any point with $\rk G_p=r-1$. Let $K\subset G_p$ be a maximal torus and consider the component $R$ of $M^K$ containing $p$.
Let also  $T$ be a maximal torus in $G$ with $K\subset T$.
The point $p$ is not in $M^T$, hence, by Lemma \ref{ift},  $R$  is a two-dimensional
$T$-invariant submanifold of $M$. By Lemma \ref{fixp} the $T$-action on
$R$ fits into one of the cases described in Lemma  \ref{lett}.
Consequently we have $T_pTp\subset T_pR$, hence $(T_pGp)^\perp \cap T_pR$
has dimension at most 1. The tangent vector at $p$ to any admissible geodesic segment which goes through $p$  is contained, up to a $G_p$-translation, in this intersection. Indeed, if $v\in T_pM$ is such a vector, then the group $(G_p)_v$  (being equal to the $G$-stabilizer of points on the geodesic that are sufficiently close
to $p$) has rank $r-1$ and hence contains a $G_p$-conjugate of $K$:
thus, the geodesic segment is, up to a $G_p$-translation, contained in $R$.
 The claim follows because adapted geodesic segments are supposed to meet every $G$-orbit in $N$.    \end{proof}

For each component $N$ we choose a geodesic $\gamma_N$ adapted to it, along with
the group $G_{\gamma_N}$, the pointwise $G$-stabilizer of $\gamma_N$.
Both stabilizers
 $G_{\gamma_N(0)}$ and $G_{\gamma_N(1)}$   contain $G_{\gamma_N}$. 
Let $\mfg_{\gamma_N}$, $\mfg_{\gamma_N(0)}$ and $\mfg_{\gamma_N(1)}$ 
 be the Lie algebras of these groups.
We will make the following identifications:
\begin{align*} {}& H^*_G(G\gamma_N(0))=H^*(BG_{\gamma_N(0)})=
S(\mfg_{\gamma_N(0)}^*)^{G_{\gamma_N(0)}},\\
{}& H^*_G(G\gamma_N(1))=H^*(BG_{\gamma_N(1)})=
S(\mfg_{\gamma_N(1)}^*)^{G_{\gamma_N(1)}},\\ 
 {}& H^*_G(G\gamma_N(t))=H^*(BG_{\gamma_N})=S(\mfg_{\gamma_N}^*)^{G_{\gamma_N}},\end{align*}
 where $0<t<1$. 
In this way, the canonical  maps $H^*(BG_{\gamma_N(0)})\to H^*(BG_{\gamma_N})$ and $H^*(BG_{\gamma_N(1)})\to H^*(BG_{\gamma_N})$ are those induced 
by restriction from $\mfg_{\gamma_N(0)}$ respectively $\mfg_{\gamma_N(1)}$ to $\mfg_{\gamma_N}$.

\subsection{The 1-skeleton and equivariant cohomology}
In Propositions \ref{lem:a} and \ref{prop:b} we will give a complete description of the $S(\mfg^*)^G$-algebra $H^*_G(M)$. First we need a Lemma, whose proof is essentially the same as that of \cite[Proposition 4.2]{GR}\footnote{Note that there is a slight error in \cite[Proposition 4.2]{GR}: on the right hand side of the equation one has to consider the subgroup of $N_G(\mft_p)$ consisting of those elements which leave invariant $M^{\mft_p,p}$ instead of the whole normalizer $N_G(\mft_p)$.}.
\begin{lem}\label{lem:eqcohomgeodpiece} Let $N$ be a component of the nonabelian 1-skeleton, and $(a,b)\subset [0,1]$ be any interval. Let $T_N$ be a maximal torus in $G_{\gamma_N}$ (of rank $r-1$). Then
\[
H^*_G(G \gamma_N((a,b))) \simeq S(\mft_N^*)^{W(G_{\gamma_N})}
\]
as graded rings. 
\end{lem}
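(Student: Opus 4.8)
\textbf{Proof proposal for Lemma \ref{lem:eqcohomgeodpiece}.}

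The plan is to analyze the set $G\gamma_N((a,b))$ directly as a $G$-space and identify it, up to equivariant homotopy equivalence, with a homogeneous bundle over the interval $(a,b)$. First I would observe that, by the results recalled from Section \ref{sec:non-abeliansk} (in particular Lemma \ref{gsi}(a) and (b), Lemma \ref{gsii}, and Lemma \ref{lem:adapted}), the adapted geodesic $\gamma_N$ meets each $G$-orbit of $N$ exactly once on the relevant parameter range, so $\gamma_N$ restricted to $(a,b)$ is a global slice for the $G$-action on $G\gamma_N((a,b))$. Consequently the $G$-stabilizer of $\gamma_N(t)$ is constant in $t$ on $(a,b)$ — call it $G_{\gamma_N}$ — so that $G\gamma_N((a,b))$ is $G$-equivariantly homeomorphic (indeed diffeomorphic, off the singular orbits) to $(a,b)\times G/G_{\gamma_N}$ with $G$ acting only on the second factor. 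Here I should be slightly careful that the slice representation is trivial along $\gamma_N$, which is exactly the content of $G_{\gamma_N}$ being the pointwise stabilizer of an interval's worth of points; this is where the footnote's correction to \cite[Proposition 4.2]{GR} enters, and I would cite that proposition's (corrected) argument rather than reprove it.

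Given this identification, the Borel construction yields
\[
H^*_G(G\gamma_N((a,b))) \simeq H^*_G((a,b)\times G/G_{\gamma_N}) \simeq H^*_G(G/G_{\gamma_N}) \simeq H^*(BG_{\gamma_N}),
\]
using that $(a,b)$ is contractible and $G$-fixed. It then remains to identify $H^*(BG_{\gamma_N})$ with $S(\mft_N^*)^{W(G_{\gamma_N})}$, where $T_N$ is a maximal torus of $G_{\gamma_N}$. This is the standard fact that, with real coefficients, $H^*(BK) \cong H^*(BT_K)^{W(K)} = S(\mft_K^*)^{W(K)}$ for any compact Lie group $K$ with maximal torus $T_K$ — valid even when $K$ is disconnected, since $W(K) = N_K(T_K)/T_K$ and the Weyl-integration/transfer argument goes through. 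I would invoke this together with the observation that $\rk G_{\gamma_N} = r-1$, which follows because $\gamma_N(t)\in M_{r-1}\setminus\Mmax$ for $t\in(0,1)$, so $\rk G_{\gamma_N(t)} = r-1$, and $G_{\gamma_N} = G_{\gamma_N(t)}$ for $t\in(a,b)\subset(0,1)$ (shrinking $(a,b)$ if it were to touch the endpoints — but the lemma only asserts the isomorphism, and for any subinterval the stabilizer on the open part is the same constant group, so this is harmless).

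The main obstacle I expect is the careful justification that $\gamma_N$ really is a global slice, i.e., that $G\gamma_N((a,b)) \cong (a,b)\times G/G_{\gamma_N}$ $G$-equivariantly with a \emph{locally trivial} structure and constant isotropy — in other words, that there are no hidden jumps in the isotropy type along the open geodesic and that the tube around $G\gamma_N(t)$ trivializes compatibly for nearby $t$. All the ingredients for this are already assembled in Section \ref{sec:non-abeliansk} (Lemmata \ref{gsi}, \ref{gsii}, \ref{lem:adapted}), so the argument amounts to assembling them correctly and matching the setup of \cite[Proposition 4.2]{GR} (with the footnoted correction); everything after that — the Borel construction computation and the Chevalley-type identification $H^*(BK)\cong S(\mft_K^*)^{W(K)}$ — is routine.
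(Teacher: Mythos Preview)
Your proposed shortcut rests on the claim that the isotropy group $G_{\gamma_N(t)}$ is constant (equal to $G_{\gamma_N}$) for all $t\in(a,b)$, so that $G\gamma_N((a,b))\cong (a,b)\times G/G_{\gamma_N}$. You say the ingredients for this are in Lemmata \ref{gsi}, \ref{gsii}, and \ref{lem:adapted}, but those results only establish that the \emph{Weyl groups} $W(G_{\gamma_N(t)})$ (equivalently the normalizers $N_{G_{\gamma_N(t)}}(T_N)$, and hence the rings $H^*(BG_{\gamma_N(t)})$) are constant on the open interval---not that the groups themselves are. Two compact Lie groups $H\subset K$ sharing a maximal torus $T$ and satisfying $N_H(T)=N_K(T)$ need not be equal (take $H=N_{\SU(2)}(S^1)\subset K=\SU(2)$), so this gap is real. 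The inference ``$\gamma_N|_{(a,b)}$ is a section of the orbit map, hence the isotropy is constant'' is simply not valid, and \cite[Proposition~4.2]{GR} does not assert it either. Note that the paper itself, in the proof of Proposition~\ref{lem:a}, carefully chooses $\varepsilon$ small so that $G_{\gamma_N(t)}=G_{\gamma_N}$ holds on $(0,\varepsilon)$, rather than asserting it on all of $(0,1)$.

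The paper's proof takes a genuinely different route that sidesteps this issue. It considers the map
\[
G\times_K \bigl(T\gamma_N((a,b))\bigr) \longrightarrow G\gamma_N((a,b)),\qquad K=\{g\in N_G(T_N)\mid g(T\gamma_N((a,b)))\subset T\gamma_N((a,b))\},
\]
and computes its fiber over $\gamma_N(s)$ to be $G_{\gamma_N(s)}/N_{G_{\gamma_N(s)}}(T_N)$. These fibers are acyclic over $\RR$ for \emph{every} $s$ (by \cite[Lemma~3.2]{GR} or \cite[III.1, Lemma~(1.1)]{Hs}), regardless of whether $G_{\gamma_N(s)}$ varies. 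The Leray spectral sequence then collapses, reducing the computation to $H^*_K(T\gamma_N((a,b)))$; finally one identifies $K$ as generated by $T$ and $N_{G_{\gamma_N}}(T_N)$, with identity component $T$, and finishes. This acyclic-fiber step is precisely the missing ingredient in your argument---and it is, in fact, essentially the content of \cite[Proposition~4.2]{GR} that you propose to cite, so following that citation carefully would lead you to the paper's proof rather than to the product decomposition you want.
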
  
\begin{proof} Let $T$ be a maximal torus in $G$ such that $T_N\subset T$.
We first calculate the fibers of the natural map
\begin{equation}\label{eq:lem1skeleton}
G\times_K (T \gamma_N((a,b))) \to G \gamma_N((a,b)),
\end{equation}
where $K=\{g \in N_G(T_N)\mid g (T \gamma_N((a,b))) \subset T \gamma_N((a,b))\}$. 
Assume that $[g,t\gamma_N(s_0)]$ is mapped to $\gamma_N(s_1)\in \gamma_N((a,b))$, i.e., $gt\gamma_N(s_0)=\gamma_N(s_1)$. By the last condition in the definition of an adapted geodesic segment, this implies that $s_0=s_1$. Thus, the fiber of $\gamma_N(s_1)$ is the set of elements of the form $[h,\gamma_N(s_1)]$ with $h\in G_{\gamma_N(s_1)}$, which is isomorphic to $G_{\gamma_N(s_1)}/G_{\gamma_N(s_1)}\cap K$. We clearly have $G_{\gamma_N(s_1)}\cap K\subset N_{G_{\gamma_N(s_1)}}(T_N)$. 
The converse inclusion holds, because the
connected component of $\gamma_N(s_1)$ in $M^{T_N}$
is in fact  a two-dimensional component of the abelian 1-skeleton of the $T$-action
on $M$ and is left invariant by any $g\in G$
which fixes $\gamma_N(s_1)$ and normalizes $T_N$;
the action of $T$ on this component is of one of the two types described in Lemma
\ref{lett} and both $\gamma_N$ and $g\gamma_N$ are contained in the component and
$T$-transversal relative to this action, which implies readily that $g\in G_{\gamma_N}$ and
$g (T \gamma_N(s))\subset T \gamma_N(s)$, for all $s\in (a,b)$.
 Thus, the fiber of $\gamma_N(s_1)$ is the quotient $G_{\gamma_N(s_1)}/N_{G_{\gamma_N(s_1)}}(T_N)$, which is acyclic by \cite[Lemma 3.2]{GR} (if $G_{\gamma_N(s_1)}$ is connected, see \cite[Sect.~III.1, Lemma (1.1)]{Hs}). The Leray spectral sequence of the Borel construction associated to \eqref{eq:lem1skeleton} thus collapses at the $E_2$-term, which implies that 
\[
H^*_G(G \gamma_N((a,b))) = H^*_K(T \gamma_N((a,b))).
\]
Next we note that $K$ is the subgroup of $G$ generated by $T$ and $N_{G_{\gamma_N}}(T_N)$ (given $k\in K$, we have $k \gamma_N(s_1)
\in T \gamma_N(s_1)$, hence there exists $t\in T$ such that 
$tk$ is in $G_{\gamma_N(s_1)}$ and normalizes $T_N$, which implies that
$tk\in G_{\gamma_N}$, by an argument already used  before). Since 
the identity component of  $N_{G_{\gamma_N}}(T_N)$ is $T_N$, 
the identity component of $K$ is $T$.
It follows that
\[
H^*_K(T \gamma_N((a,b))) = (S(\mft_N^*)\otimes H^*_{T/T_N}(T \gamma_N((a,b))))^{K/T} = S(\mft_N^*)^{W(G_{\gamma_N})}
\]
because the orbit space of the $T/T_N$-action on $T \gamma_N((a,b))$ is just $(a,b)$, which is contractible.
\end{proof}

\begin{prop}\label{lem:a} 
Let $N$ be a component of the non-abelian 1-skeleton.
\begin{enumerate}
\item[(a)] If $N$ is of type (I), 
 then the image of the restriction map 
 \[H^*_G(N)\to
H^*_G(N\cap \Mmax) =S(\mfg_{\gamma_N(0)}^*)^{G_{\gamma_N(0)}}\oplus S(\mfg_{\gamma_N(1)}^*)^{G_{\gamma_N(1)}}\]
consists of all pairs $(f,g)$
such that $f|_{\mfg_{\gamma_N}}=g|_{\mfg_{\gamma_N}}$.
\item[(b)] If $N$ is of type (II), then the image of the restriction map
\[H^*_G(N)\to
H^*_G(N\cap \Mmax) = S(\mfg_{\gamma_N(0)}^*)^{G_{\gamma_N(0)}}\]
consists of all $f$ for which there exists
$g\in S(\mfg_{\gamma_N(1)}^*)^{G_{\gamma_N(1)}}$ such that $f|_{\mfg_{\gamma_N}}=
g|_{\mfg_{\gamma_N}}$. 
\item[(c)] If $N$ is of type (III), then the image of the restriction map \[
H^*_G(N)\to 
H^*_G(N \cap \Mmax) = S(\mfg_{\gamma_N(0)}^*)^{G_{\gamma_N(0)}}
\]
 consists of all $f$ such that 
$f|_{\mfg_{\gamma_N}} = (\Ad_h^*f)|_{\mfg_{\gamma_N}}$, where $h\in G$ is such that $h\gamma_N(0)=\gamma_N(1)$.
\end{enumerate}
 \end{prop}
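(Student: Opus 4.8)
The plan is to compute $H^*_G(N)$ by a Mayer--Vietoris argument for a $G$-invariant open cover of $N$ assembled from the adapted geodesic $\gamma_N$ of Lemma~\ref{lem:adapted}, and then to read off the image of $H^*_G(N)\to H^*_G(N\cap\Mmax)$ from the exact sequence. Write $p_0=\gamma_N(0)$, and, in case (I), also $p_1=\gamma_N(1)$. Since orbit projections are open maps, for every open subinterval $J$ of $[0,1]$ (resp.\ of $\RR/\ZZ$ in case (III)) the saturation $G\gamma_N(J)$ is open in $N$; the cover will consist of a small neighbourhood of each orbit in $N\cap\Mmax$ together with the ``middle'' piece $U_m=G\gamma_N((0,1))=N\setminus\Mmax$. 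By Lemma~\ref{lem:eqcohomgeodpiece}, $H^*_G(U_m)\cong S(\mft_N^*)^{W(G_{\gamma_N})}=H^*(BG_{\gamma_N})$, and the same holds for every overlap $G\gamma_N((a,b))$; moreover the restriction between two such pieces attached to nested subintervals of $(0,1)$ is, under these identifications, the identity (in the proof of Lemma~\ref{lem:eqcohomgeodpiece} the interval enters only through its cohomology, which is that of a point).

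The crucial point I would establish first is that a small $G$-invariant neighbourhood in $N$ of an orbit $Gq\subset N\cap\Mmax$ is $G$-equivariantly homotopy equivalent to $Gq$. Take a $G$-invariant tube $V$ around $Gq$ in $M$, so that by the slice theorem $V\cong G\times_{G_q}D$ with $D$ a $G_q$-ball in the normal space at $q$, and each $G$-orbit meets the slice $\exp_q(D)$ in exactly one $G_q$-orbit. Then $N\cap V\cong G\times_{G_q}C$, where $C=\exp_q^{-1}(N)\cap D$ is the union of the $G_q$-orbits of the initial directions of the branches of $\gamma_N$ issuing from $q$: one branch when $q=p_0$ in case (I), and two branches, coming from the two ends of $\gamma_N$ landing on the same orbit, in cases (II) and (III). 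Each such set of directions is a $G_q$-invariant union of rays through $0$, hence $C$ is star-shaped about $0$ and $G_q$-equivariantly contracts onto $0$; therefore $N\cap V$ $G$-deformation retracts onto $Gq$, the retraction being ``sliding back along $\gamma_N$''. Consequently $H^*_G(N\cap V)=H^*(BG_q)=S(\mfg_q^*)^{G_q}$, and the restriction of this group to an overlap $G\gamma_N((a,b))$ lying inside a given branch is the natural map $S(\mfg_q^*)^{G_q}\to S(\mfg_{\gamma_N}^*)^{G_{\gamma_N}}$ dual to $\mfg_{\gamma_N}\hookrightarrow\mfg_q$ --- with one exception: for the branch reaching $\gamma_N(1)=hp_0$ in case (III), where $h\gamma_N(0)=\gamma_N(1)$, one must first apply the automorphism $\Ad_h$ identifying $\mfg_{p_0}$ with $\mfg_{\gamma_N(1)}$, so that the restriction becomes $f\mapsto(\Ad_h^*f)|_{\mfg_{\gamma_N}}$.

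Granting this, the conclusion is short. In case (I), apply Mayer--Vietoris to $N=U_m\cup(U_0\sqcup U_1)$; the relevant map reads
\[
H^*(BG_{p_0})\oplus H^*(BG_{p_1})\oplus H^*(BG_{\gamma_N})\ \longrightarrow\ H^*(BG_{\gamma_N})\oplus H^*(BG_{\gamma_N}),\quad ((f,g),\psi)\mapsto(f|_{\mfg_{\gamma_N}}-\psi,\,g|_{\mfg_{\gamma_N}}-\psi),
\]
whose kernel is $\{((f,g),\psi):\psi=f|_{\mfg_{\gamma_N}}=g|_{\mfg_{\gamma_N}}\}$. Since composing the Mayer--Vietoris map with the projection onto $H^*_G(U_0)\oplus H^*_G(U_1)\cong H^*_G(N\cap\Mmax)$ gives the restriction map, exactness identifies the image of $H^*_G(N)\to H^*_G(N\cap\Mmax)$ with the projection of this kernel, namely $\{(f,g):f|_{\mfg_{\gamma_N}}=g|_{\mfg_{\gamma_N}}\}$, which is (a). Case (II) is the same computation, except that $Gp_1$ is now the exceptional orbit and hence not in $\Mmax$; projecting the kernel only onto the $H^*(BG_{p_0})$-factor yields the condition ``$\exists\,g\in S(\mfg_{\gamma_N(1)}^*)^{G_{\gamma_N(1)}}$ with $f|_{\mfg_{\gamma_N}}=g|_{\mfg_{\gamma_N}}$'' of (b). In case (III) one uses the two-set cover $N=U_0\cup U_m$, where $U_0\cap U_m$ has two components, one in each branch of $\gamma_N$; the two resulting equations $\psi=f|_{\mfg_{\gamma_N}}$ and $\psi=(\Ad_h^*f)|_{\mfg_{\gamma_N}}$ combine, after eliminating $\psi$, to $f|_{\mfg_{\gamma_N}}=(\Ad_h^*f)|_{\mfg_{\gamma_N}}$, which is (c).

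The hard part is the homotopy-equivalence claim of the second paragraph: because $N$ need not be smooth (Remark~\ref{rem:compnotsmooth}), one cannot simply invoke the cohomogeneity-one structure of a tubular neighbourhood of a singular orbit. What rescues the argument is that the slice theorem is applied to the ambient manifold $M$, not to $N$, and that $N$ meets each slice in the elementary star-shaped set $C$ described above --- a description that in turn rests on Lemmata~\ref{gsi} and~\ref{gsii}, which control the branches of $\gamma_N$ and the stabilisers along them. Everything after that is bookkeeping with the identifications fixed just before the statement.
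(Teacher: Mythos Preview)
Your approach is essentially the same as the paper's: both compute $H^*_G(N)$ via Mayer--Vietoris for a $G$-invariant cover built from pieces $G\gamma_N(J)$, invoke Lemma~\ref{lem:eqcohomgeodpiece} for the open-interval pieces, and use a retraction argument for neighbourhoods of the extreme orbits. The paper organizes (a) and (b) as a nested Mayer--Vietoris---first proving $H^*_G(N\setminus G\gamma_N(i))\cong H^*(BG_{\gamma_N(1-i)})$ via a sub-cover $G\gamma_N([0,\varepsilon))\cup G\gamma_N((\varepsilon/2,1))$, then covering $N$ by the two halves---whereas you do a single Mayer--Vietoris for the cover $U_m\cup(U_0\sqcup U_1)$; these are equivalent reorganizations of the same computation, and your version is arguably a bit more streamlined. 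For the retraction onto the extreme orbit the paper simply writes down $r_s(g\gamma_N(t))=g\gamma_N(st)$ (well-defined once $\varepsilon$ is small enough that $G_{\gamma_N(t)}=G_{\gamma_N}$ for $0<t<\varepsilon$), which is lighter than your slice-theorem argument but is the same contraction. For (c) the covers and the identification of the second restriction map via $\Ad_h^*$ match the paper's exactly.

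Two small slips in your write-up, neither of which affects the argument. First, the equality $U_m=G\gamma_N((0,1))=N\setminus\Mmax$ fails in case (II), since there $G\gamma_N(1)\notin\Mmax$; what you actually use, and what suffices, is $U_m=G\gamma_N((0,1))$. Second, the ``two branches'' remark is misplaced for case (II): by the definition of an adapted geodesic the map $[0,1]\to N/G$ is a bijection, so $\gamma_N(0)$ and $\gamma_N(1)$ lie on \emph{different} $G$-orbits and each sees only one branch of $\gamma_N$. Your actual treatment of (II)---run the case-(I) computation and then project onto the $H^*(BG_{\gamma_N(0)})$-factor---is correct regardless.
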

 
 \begin{proof} 
We prove (a) and (b) at the same time. Consider the covering 
\begin{equation}\label{eq:mvs0} 
N=(N\setminus G \gamma_N(1))\cup (N\setminus G \gamma_N(0)).
\end{equation} We claim first that $H^*_G(N\setminus G \gamma_N(1))\cong H^*_G(G \gamma_N(0))=S(\mfg_{\gamma_N(0)}^*)^{G_{\gamma_N(0)}}$. For that, let $\varepsilon>0$ be so small that for all $0<t<\varepsilon$, the isotropy group $G_{\gamma_N(t)}$ is equal to $G_{\gamma_N}$. We consider the covering of $N\setminus G \gamma_N(1)$ given by 
\begin{equation}\label{eq:mvs1}
N\setminus G \gamma_N(1) = U\cup V,
\end{equation}
where $U=G \gamma_N([0,\varepsilon))$ and $V=G \gamma_N((\varepsilon/2,1))$.  Then, $G \gamma_N(0)$ is a deformation retract of $U $ via the $G$-equivariant retraction $r_s(g\gamma_N(t))=g\gamma_N(st)$, $s\in [0,1]$. By Lemma \ref{lem:eqcohomgeodpiece}, both $H^*_G(V)$ and $H^*_G(U\cap V)$ are isomorphic to $S(\mft_N^*)^{W(G_{\gamma_N})}$, and hence isomorphic to each other. Thus, the equivariant Mayer-Vietoris sequence of the covering \eqref{eq:mvs1} becomes short exact
\[
0\longrightarrow H^*_G(N\setminus G \gamma_N(1))\longrightarrow H^*_G(U)\oplus H^*_G(V)\longrightarrow H^*_G(U\cap V)\longrightarrow  0
\]
and shows that the restriction map $H^*_G(N\setminus G \gamma_N(1))\to H^*_G(U)\cong S(\mfg_{\gamma_N(0)}^*)^{G_{\gamma_N(0)}}$ is an isomorphism. Using this, together with the analogous statement for $N\setminus G \gamma_N(0)$ and another application of Lemma \ref{lem:eqcohomgeodpiece}, this time to $G \gamma_N((0,1))$, we see that the equivariant Mayer-Vietoris sequence of the covering \eqref{eq:mvs0} takes the form
\[
\ldots \longrightarrow H^*_G(N) \longrightarrow S(\mfg_{\gamma_N(0)}^*)^{G_{\gamma_N(0)}}\oplus S(\mfg_{\gamma_N(1)}^*)^{G_{\gamma_N(1)}} \longrightarrow S(\mft_N^*)^{W(G_{\gamma_N})}\longrightarrow \ldots 
\]
This implies (a) and (b). 

For (c), we use the same method of proof, but this time with the covering 
\begin{equation}
N = G \gamma_N([0,\varepsilon)\cup (1-\varepsilon,1])\cup G \gamma_N((\varepsilon/2,1-\varepsilon/2)).
\end{equation}
By Lemma \ref{lem:eqcohomgeodpiece} and since $G\gamma_N(0)=G\gamma_N(1)$, the corresponding equivariant Mayer-Vietoris sequence takes the form
\begin{align}
\ldots \longrightarrow H^*_G(N) \longrightarrow
&S(\mfg^*_{\gamma_N(0)})^{G_{\gamma_N(0)}} \oplus S(\mft_N^*)^{W(G_{\gamma_N})}\nonumber\\
&\qquad\qquad\qquad \longrightarrow S(\mft_N^*)^{W(G_{\gamma_N})} \oplus S(\mft_N^*)^{W(G_{\gamma_N})} \longrightarrow \ldots \label{eq:longeqmvs}
\end{align}
Let $h\in G$ be such that $h\gamma_N(0)=\gamma_N(1)$. Then, $G_{\gamma_N(1)} = hG_{\gamma_N(0)}h^{-1}$ and hence, $\Ad_h^*:S(\mfg_{\gamma_N(0)})^{G_{\gamma_N(0)}}\to S(\mfg_{\gamma_N(1)})^{G_{\gamma_N(1)}}$ is an isomorphism.  The claim follows because an element $(f,g)$ contained in the middle term of \eqref{eq:longeqmvs} is mapped to $(f|_{\mft_N}-g,(\Ad_h^*f)|_{\mft_N}-g)$.
\end{proof}

Let us denote the components of $\Mmax$ by $O_A$, where $A$ is in a certain (finite) set  ${\mathcal A}$.

\begin{prop}\label{prop:b} The image of the canonical map 
$H^*_G(M_{r-1}) \to H^*_G(\Mmax) =\bigoplus_{A\in {\mathcal A}}H^*_G(O_A)$
consists of all $(f_A)_{A \in {\mathcal A}}$ with the property that:
\begin{itemize}
\item if there exist a component $N$   such that $N\cap \Mmax= O_A \cup O_B$,
for $A, B\in {\mathcal A}$, $A\neq B$, then
$(f_A, f_B)$ is in the image of the restriction map $H^*_G(N)\to H^*_G(O_A)\oplus H^*_G(O_B)$.
\item if there exist a component $N$ such that $N\cap \Mmax= O_A$,  
for $A\in {\mathcal A}$,
then
$f_A$ is in the image of the restriction map $H^*_G(N) \to H^*_G(O_A)$.
\end{itemize}
\end{prop}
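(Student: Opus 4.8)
The plan is to reduce the statement to a Mayer–Vietoris argument built from the covering of $M_{r-1}$ by its components $N$, using Proposition \ref{lem:a} to identify the local contributions. First I would observe that $M_{r-1} = \bigcup_N N$ is a finite union of closed $G$-invariant subspaces whose pairwise intersections are contained in $\Mmax$ (a fact already recorded in the summary in Section \ref{sec:non-abeliansk}). Moreover, $\Mmax$ itself is the disjoint union $\bigsqcup_{A \in \mathcal A} O_A$ of closed $G$-orbits, and each component $N$ satisfies either $N \cap \Mmax = O_A \cup O_B$ with $A \neq B$, or $N \cap \Mmax = O_A$ (with the circle case (III) also yielding $N \cap \Mmax = O_A$, where $h$ identifies $\gamma_N(0)$ and $\gamma_N(1)$ within the same orbit). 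Thus the ``nerve'' of the covering of $M_{r-1}$ by the $N$'s together with the $O_A$'s is essentially a graph, and the intersections are controlled.

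The technical point is that $\Mmax$ is not open in $M_{r-1}$, so one cannot directly feed the covering $\{N\}$ into a Mayer–Vietoris sequence. The standard fix is to thicken: for each component $N$ choose a $G$-invariant open neighborhood $\widetilde N$ of $N$ in $M_{r-1}$ that deformation retracts $G$-equivariantly onto $N$, chosen small enough that $\widetilde N \cap \widetilde{N'}$ deformation retracts onto $N \cap N' \subset \Mmax$ for $N \neq N'$ and that $\bigcup_N \widetilde N = M_{r-1}$; such neighborhoods exist because along $M_{r-1} \setminus \Mmax$ the isotropy type is locally constant and the orbit-space picture near each $O_A$ is a cone on finitely many arcs (this is exactly the local structure extracted from Lemmata \ref{gsi} and \ref{gsii} via the slice theorem). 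Then the restriction $H^*_G(\widetilde N) \to H^*_G(N)$ is an isomorphism, and similarly on overlaps $H^*_G(\widetilde N \cap \widetilde{N'}) \cong H^*_G(N \cap N') = H^*_G(\Mmax \cap N \cap N')$.

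With this in hand I would run the Mayer–Vietoris (Čech-to-derived-functor) spectral sequence for the covering $\{\widetilde N\}$ of $M_{r-1}$. Because all multi-intersections reduce to subunions of $\Mmax$ and $\Mmax$ is a disjoint union of the $O_A$, the relevant exact sequence is the equalizer
\[
0 \longrightarrow H^*_G(M_{r-1}) \longrightarrow \bigoplus_N H^*_G(N) \longrightarrow \bigoplus_{\{N,N'\}} H^*_G(N \cap N'),
\]
so that an element of $H^*_G(M_{r-1})$ is precisely a compatible family $(x_N)_N$ with $x_N \in H^*_G(N)$ agreeing on overlaps. Now I compose with the restriction to $\Mmax$: by Theorem \ref{thm:CS}, or rather by the injectivity of $H^*_G(\Mmax \cap N) \hookrightarrow$ nothing is lost here since each $N \cap N'$ lies in $\Mmax$ and the restriction $\bigoplus_N H^*_G(N) \to \bigoplus_A H^*_G(O_A)$ has the property that its composite detects the overlap conditions — concretely, $(x_N)$ is compatible on overlaps iff the induced tuple $(f_A)_{A} \in \bigoplus_A H^*_G(O_A)$ has, for every $N$, the property that its restriction to $N \cap \Mmax$ lies in the image of $H^*_G(N) \to H^*_G(N \cap \Mmax)$. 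This last image is exactly what Proposition \ref{lem:a} computes in all three cases (I), (II), (III). Matching these two descriptions gives the claimed characterization of the image of $H^*_G(M_{r-1}) \to H^*_G(\Mmax)$.

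The main obstacle I anticipate is not the combinatorics but the construction and justification of the $G$-equivariant thickenings $\widetilde N$ and the verification that their overlaps retract correctly onto $N \cap N' \subset \Mmax$; this requires a careful local analysis near the orbits $O_A$ using the slice theorem, keeping track of how several components $N$ can emanate from a single $O_A$ and showing that no ``extra'' intersections appear outside $\Mmax$ (which is where Lemma \ref{lem:intersection} is used). Once the covering is set up correctly, the cohomological bookkeeping is routine. An alternative route, avoiding explicit thickenings, is to note that $M_{r-1}$ is homotopy equivalent (as a $G$-space) to the homotopy colimit of the diagram of the $N$'s and the $O_A$'s indexed by the face poset of the associated graph, and then invoke the Bousfield–Kan spectral sequence; but the hands-on Mayer–Vietoris approach is more transparent and matches the style of the proof of Proposition \ref{lem:a}.
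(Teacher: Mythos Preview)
Your approach is correct in outline but takes a genuinely different route from the paper's. The paper does not cover $M_{r-1}$ by thickenings of the components $N$; instead it works with the pair $(M_{r-1},\Mmax)$. The key observation is that $M_{r-1}\setminus\Mmax$ is the \emph{disjoint} union of the sets $N\setminus\Mmax$, so after replacing $\Mmax$ by an open thickening $U$ (built from short initial and final pieces of the adapted geodesics), excision gives
\[
H^{*+1}_G(M_{r-1},\Mmax)\cong H^{*+1}_G(M_{r-1}\setminus\Mmax,\,U\setminus\Mmax)\cong\bigoplus_N H^{*+1}_G(N,N\cap\Mmax),
\]
and then the long exact sequences of the pairs $(M_{r-1},\Mmax)$ and $(N,N\cap\Mmax)$ finish the argument immediately. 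This bypasses the \v{C}ech machinery entirely: only one thickening (of $\Mmax$) is needed, and no spectral sequence.

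Your route works too, but two points deserve more care than you give them. First, the exactness you assert at $\bigoplus_N H^*_G(N)$ is not automatic from Mayer--Vietoris: you need that the edge map in the \v{C}ech spectral sequence surjects onto $E_2^{0,*}$, i.e., that the higher differentials out of column~$0$ vanish. This does hold here, because for $p\geq 1$ every $p$-fold intersection is a subunion of $\Mmax$, so $E_1^{p,q}$ for $p\geq 1$ is, for each orbit $O_A$, $H^q_G(O_A)$ tensored with the simplicial cochains of the full simplex on $\{N:O_A\subset N\}$; hence $E_2^{p,q}=0$ for $p\geq 2$ and the sequence collapses at $E_2$. You should say this explicitly. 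Second, your claimed injectivity (``$0\to H^*_G(M_{r-1})\to\cdots$'') would require $E_2^{1,*}=0$ as well, which need not hold; fortunately you only use the description of the \emph{image}, so the ``$0\to$'' is irrelevant and should be dropped. With these clarifications your argument goes through, but the paper's excision argument is shorter and avoids both issues.
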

 
\begin{proof}  From the long exact sequence of the pair $(M_{r-1}, \Mmax)$ we deduce that
the image of the map $H^*_G(M_{r-1})\to H^*_G(\Mmax)$ is equal to the kernel of
$H^*_G(\Mmax) \to H^{*+1}_G(M_{r-1},\Mmax)$. We set
  $$U:=\bigcup_{N} G\gamma_N([0, \varepsilon)) \cup G\gamma_N((1- \varepsilon, 1])$$
where $N$ runs through the components of the non-abelian $1$-skeleton of the $G$-action, and $\varepsilon>0$ is chosen so small that $\Mmax$ is a deformation retract of $U$ (see the proof of Lemma \ref{lem:a}). 
  Moreover, $U$ is open in $M_{r-1}$: the complement of $U$ in 
  $M_{r-1}=\bigcup_{N} G\gamma_N([0,1])$ is equal to
$\bigcup_{N}G\gamma_N([ \varepsilon, 1- \varepsilon])$, which is a compact space. 

By using the homotopy, respectively the excision property we obtain: 
$$H^{*+1}_G(M_{r-1}, \Mmax)= H^{*+1}_G(M_{r-1}, U) = H^{*+1}_G(M_{r-1}\setminus \Mmax, U\setminus \Mmax).$$
Since $M_{r-1}\setminus \Mmax$ is equal to the {\it disjoint} union of all $N \setminus \Mmax$, the last term in the
previous equation is equal to $\bigoplus_{N} H^{*+1}_G(N\setminus \Mmax, G\gamma_N((0, \varepsilon))\cup G\gamma_N((1-\varepsilon,1)))$.
By using again the excision, respectively homotopy property, the last sum is equal to 
$$\bigoplus_{N} H^{*+1}_G(N, N\cap \Mmax) .$$The result stated by the proposition follows now
from the long exact cohomology
sequence of the pairs $(N, N\cap \Mmax)$. 
\end{proof}

\section{Non-abelian GKM graphs}\label{sec:non-abelianGKMgraphs}
\subsection{Abstract non-abelian GKM graphs}
 Recall that Propositions \ref{lem:a} and \ref{prop:b}  give a complete description of the 
$G$-equivariant cohomology of $M$ associated to the group action
$G\times M \to M$ which satisfies the non-abelian GKM conditions.
The main observation of this section is that the output, i.e., 
$H^*_G(M)$ as an $S(\mfg^*)^G$-algebra, can be encoded into 
 the so-called non-abelian GKM graph
associated to the action. This is a special kind of graph, which can be abstractly defined as follows:
\begin{dfn} A {\it non-abelian GKM graph}  (associated to $G$) is a diagram which consists of:
\begin{itemize}
\item finitely many circles (sometimes just simple closed contours), indexed 
by  $A \in {\mathcal A}$, whose interiors are pairwise disjoint 
\item finitely many dots, each dot $a$ lying inside a circle and  labeled
with a closed subgroup  $G_a \subset G$ 
\item finitely many stars, lying outside the circles and which are labeled with closed subgroups of $G$ as well
\item edges that join pairs of dots or a dot with a star; each
edge is labeled with a group  which is contained in the groups corresponding
to the two end-points
\item arrows that join two dots; an arrow going from $a$ to $b$, where
$a$ and  $b$ are dots, is labeled with an isomorphism $\mfg_{a} \to \mfg_{b}$.
\end{itemize} 
 Each circle contains a distinguished dot, the {\it representative}, which is
joined with any other dot inside that circle by exactly one arrow emerging at the representative. There are no other
arrows.
\end{dfn}
Here comes the precise construction.

\subsection{Construction of the non-abelian GKM graph}

The input consists of the components $N$ of the non-abelian 1-skeleton $M_{r-1}$;
each of them comes equipped with
 an adapted geodesic segment $\gamma_N$.

\noindent {\it Step 1.} We draw one big circle for each connected component $O_A$ of $\Mmax$. Inside the circle we draw one dot, which corresponds to  an endpoint 
contained in $O_A$ of one of the chosen adapted geodesic segments.
We call this point a 
\emph{representative}  of  $O_A$. We attach to it the label consisting of the $G$-isotropy group of that point. In the following steps there will potentially be drawn more dots inside the circle, corresponding to other endpoints contained in $O_A$
of adapted geodesic segments.

\noindent {\it Step 2.} For each component $N$ of type (I) we proceed in the following way: 
Say that the two orbits in $\Mmax$ contained in $N$ are $O_A$ and $O_B$.
At this step, we attach to $N$ a line segment and possibly one or two arrows as follows. If one of the two endpoints of $\gamma_N$   
 does not correspond yet to a dot inside the corresponding circle, then we add an additional dot inside this circle, label it with the  isotropy group of that endpoint, and draw an arrow with tail at the representative of $O_A$ and tip at the new dot.
 Say that the representative and the new dot correspond to the points 
 $p_a$ and $hp_a$, where $h\in G$, and thus carry the labels $G_{p_a}$, respectively $G_{hp_a}=
hG_{p_a}h^{-1}$: then
the arrow will carry the label $\Ad_{h}$. (More precisely, the conjugation map $c_{h}: g\mapsto hgh^{-1}$ is an isomorphism
between $G_{p_a}$ and $G_{hp_a}$, hence $\Ad_h$, i.e., its differential at $e$, is a linear isomorphism  
between the Lie algebras $\mfg_{p_a}$ and $\mfg_{hp_a}$.)
The same procedure is used for $O_B$, i.e., we may need to draw an arrow
inside the corresponding circle as well.
We next join the two endpoints of $\gamma_N$  by a line
segment  and label it with  $G_{\gamma_N}$. Note that, by Lemma \ref{lem:adapted}, we may have translated
$\gamma_N$ by an element of $G$ in such a way that 
 at least one of its two endpoints 
  is the representative of its orbit in $\Mmax$ and avoid in this way 
  drawing the corresponding arrow.

\noindent {\it Step 3.} Here, we consider the  components of type (III).
For any such component, the same procedure as in Step 2 applies, the only difference being that the two dots that are connected by the line segment are contained in the same circle, because the endpoints of the corresponding adapted geodesic segment are contained in the same orbit in $\Mmax$. As before, we may have translated
this geodesic segment to make one of its endpoints coincide with  the representative 
of its orbit.

\noindent {\it Step 4.} Finally, we treat the components of type (II). For any such component $N$, we first draw a star outside all circles and label it with the isotropy group 
 $G_{\gamma_N(1)}$. We then connect it by a line segment with the dot corresponding to $\gamma_N(0)$ (if this dot does not exist yet, then introduce it, together with an arrow, as in Step 2), and label that line segment with  $G_{\gamma_N}$. 
Again, we may first replace $\gamma_N$ by a $G$-translate of it in such a way that
$\gamma_N(0)$ becomes the representative of its orbit.

\subsection{The non-abelian GKM theorem}
To any non-abelian GKM graph  we associate an $S({\mfg }^*)^G$-algebra as follows:
First for each circle $A$ we set $G_A:=G_a$, where $a$ is the representative of $A$.
Then we consider the subalgebra of $\bigoplus_{A} S(\mfg_{A}^*)^{G_{A}}$ (where the $S(\mfg^*)^G$-algebra structure is given by the natural restriction maps) consisting of all tuples 
$(f_A)$ which satisfy the following two types of relations: 
\begin{itemize}
\item  Consider an edge whose endpoints are  the dots $a$ and $b$.
We take the (possibly equal)  circles $A$  and $B$ 
in which  $a$, respectively  $b$ are contained, as well as the  arrows
from their representatives to $a$, respectively $b$,  labeled by the isomorphisms 
$h_{Aa}: \mfg_{A} \to \mfg_{a}$,
respectively $h_{Bb}:\mfg_{B}\to \mfg_{b}$. (If $a$ or $b$ is the representative of its circle, then we set $h_{Aa}$ or $h_{Bb}$ to be the identity.)
If $H$ is the label of the edge and $\mfh$ its Lie algebra, then we must have
$$ f_A\circ h_{Aa}^{-1}|_{\mfh}  =  f_B\circ h_{Bb}^{-1}|_{\mfh}.$$ 
\item  Consider an edge, whose endpoints are  the dot $a$ and a certain star.
We take the circle $A$  
in which  $a$ is contained and also the  arrow
from its representative to $a$,  labeled by the isomorphism 
$h_{Aa}: \mfg_{A}\to \mfg_{a}$. (If $a$ is the representative, then we set $h_{Aa}$ to be the identity.)
If $K$ is the label of the star, $H$  the label of the edge, and $\mfk$,
respectively
$\mfh$ their Lie algebras, then we must have
$$ f_A\circ h_{Aa}^{-1}|_{\mfh}  =  g|_{\mfh},$$
for some $g\in S(\mfk^*)^K$. 
\end{itemize}

In the previous subsection we have attached  a non-abelian GKM graph to a group
action of $G$ on $M$ which satisfies the GKM conditions.
From Propositions \ref{lem:a} and \ref{prop:b}  we immediately deduce:

\begin{thm}\label{via} If the group action of $G$ on $M$ satisfies the GKM conditions, then
  $H^*_G(M)$ is isomorphic to the $S(\mfg^*)^G$-algebra induced by
 the non-abelian GKM graph attached  to the action.
 \end{thm}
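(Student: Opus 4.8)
The plan is to translate the abstract combinatorial description of the $S(\mfg^*)^G$-algebra attached to the non-abelian GKM graph into the geometric description of $H^*_G(M)$ obtained in Propositions \ref{lem:a} and \ref{prop:b}, and to check that the two match term by term. First I would recall that, by Theorem \ref{thm:CS}, the restriction map $H^*_G(M)\to H^*_G(\Mmax)=\bigoplus_{A\in\mathcal A}H^*_G(O_A)$ is injective with image equal to the image of $H^*_G(M_{r-1})\to H^*_G(\Mmax)$. Proposition \ref{prop:b} then identifies this image as the set of tuples $(f_A)_{A\in\mathcal A}$ satisfying, for each component $N$ of the non-abelian $1$-skeleton, the condition that the relevant restriction of $(f_A)$ to $H^*_G(N\cap\Mmax)$ lie in the image of $H^*_G(N)$; and Proposition \ref{lem:a} makes these conditions fully explicit according to whether $N$ is of type (I), (II), or (III). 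So the task reduces to verifying that, under the identifications made in the construction of the graph, these three families of conditions are exactly the two families of edge-relations and star-relations defining the algebra of the graph.

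The key steps, in order: (1) Fix the chosen adapted geodesics $\gamma_N$ and the identifications $H^*_G(G\gamma_N(0))=S(\mfg_{\gamma_N(0)}^*)^{G_{\gamma_N(0)}}$, $H^*_G(G\gamma_N(t))=S(\mfg_{\gamma_N}^*)^{G_{\gamma_N}}$ for $0<t<1$, as set up before Proposition \ref{lem:a}. (2) Observe that the endpoint $\gamma_N(0)$ (and, in type (I), $\gamma_N(1)$) lies in some orbit $O_A$; if it is not the representative $a$ of that circle, then by Lemma \ref{lem:adapted} the geodesic differs from one ending at $a$ by a $G$-translation $h$, and the induced isomorphism $\Ad_h^*: S(\mfg_a^*)^{G_a}\to S(\mfg_{\gamma_N(0)}^*)^{G_{\gamma_N(0)}}$ is precisely the arrow label $h_{Aa}$ in the graph; this is why the graph-relations are written in terms of $f_A\circ h_{Aa}^{-1}$ rather than $f_A$ directly. (3) For $N$ of type (I), Proposition \ref{lem:a}(a) says the admissible pairs are those with $f|_{\mfg_{\gamma_N}}=g|_{\mfg_{\gamma_N}}$, where $f,g$ are the components at the two endpoints; transporting $f,g$ back to the representatives via the arrow isomorphisms turns this into $f_A\circ h_{Aa}^{-1}|_{\mfg_{\gamma_N}}=f_B\circ h_{Bb}^{-1}|_{\mfg_{\gamma_N}}$, which is exactly the edge-relation with label $H=G_{\gamma_N}$. (4) For $N$ of type (II), Proposition \ref{lem:a}(b) says $f$ is admissible iff there is $g\in S(\mfg_{\gamma_N(1)}^*)^{G_{\gamma_N(1)}}$ with $f|_{\mfg_{\gamma_N}}=g|_{\mfg_{\gamma_N}}$; identifying $G_{\gamma_N(1)}$ with the star's label $K$ and $G_{\gamma_N}$ with the edge label $H$, and transporting $f$ to the representative, this is exactly the star-relation. (5) For $N$ of type (III), Proposition \ref{lem:a}(c) gives the condition $f|_{\mfg_{\gamma_N}}=(\Ad_h^*f)|_{\mfg_{\gamma_N}}$ with $h\gamma_N(0)=\gamma_N(1)$; since both endpoints lie in the same orbit $O_A$, the graph has an arrow from the representative to a second dot inside circle $A$ labeled by $\Ad_h$, and an edge between these two dots labeled $G_{\gamma_N}$, and the edge-relation $f_A\circ h_{Aa}^{-1}|_{\mfg_{\gamma_N}}=f_A\circ (h\, h_{Aa})^{-1}\cdot\ldots$ unwinds to precisely this self-compatibility condition. (6) Finally, assemble: the graph's algebra is by definition the subalgebra of $\bigoplus_A S(\mfg_A^*)^{G_A}$ cut out by all these relations, hence coincides with the image in Proposition \ref{prop:b}, hence with $H^*_G(M)$; and the $S(\mfg^*)^G$-algebra structures agree because both are induced by the natural restriction maps $S(\mfg^*)^G\to S(\mfg_A^*)^{G_A}$.

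The main obstacle I expect is bookkeeping rather than conceptual: one must be careful that the arrow isomorphisms, the choice of representatives, and the possible reparametrization $s\mapsto 1-s$ for type (I) components (allowed by Lemma \ref{lem:adapted}) are all handled consistently, so that changing any of these choices yields an isomorphic graph-algebra and the isomorphism with $H^*_G(M)$ is canonical once the restriction-to-$\Mmax$ description is fixed. A subtle point worth spelling out is the well-definedness of the edge label for a type (III) component and, more generally, checking that when several components share an orbit $O_A$ the various dots and arrows introduced inside circle $A$ fit together without conflict — but this is exactly guaranteed by the construction in Steps 1–4 together with the third bullet of the summary in Section \ref{sec:non-abeliansk} (the intersection of any two components lies in $\Mmax$). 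Once these compatibilities are recorded, the theorem follows by direct comparison, with no further input beyond Propositions \ref{lem:a} and \ref{prop:b} and Theorem \ref{thm:CS}.
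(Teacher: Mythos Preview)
Your proposal is correct and follows exactly the approach intended by the paper, which simply states that the theorem is immediate from Propositions \ref{lem:a} and \ref{prop:b}; you have faithfully unpacked what ``immediate'' means by matching each type of component to the corresponding graph relation and tracking the arrow isomorphisms. The bookkeeping concerns you raise (choice of representatives, reparametrizations, compatibility of dots within a circle) are genuine but, as you note, are absorbed by the construction in Steps 1--4 and Lemma \ref{lem:adapted}, so no further argument is needed.
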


In what follows we will give some applications of this theorem to certain concrete
situations.

\begin{ex}\label{sp22} 
Let $G:=\Sp(1) \times \Sp(1)$ be the subgroup of all diagonal matrices in $\Sp(2)$, and $T:=S^1\times S^1$, where $S^1=\{ a+bi \in \HH \mid a, b \in \RR, \ a^2+b^2=1\}$. 
The torus $T$ is clearly maximal in both $\Sp(2)$ and $\Sp(1)\times \Sp(1)$. We are interested
in the action of $G:=\Sp(1)\times \Sp(1)$ on the quotient $M:=\Sp(2)/T$. The latter space is a principal
orbit of the adjoint representation of $\Sp(2)$. It is the orbit of a regular
element in $\mft$, the Lie algebra of $T$. The fixed point set $M^T$ is the same as the
intersection of the orbit with $\mft$, and actually equal
to a Weyl group orbit.  The situation is described in Figure \ref{fig4}: the Weyl group
$W(\Sp(2))$ is generated by the reflections through the  lines $l_1$, $l_2$, $l_3$, and $l_4$, whereas
$W(\Sp(1)\times \Sp(1))\simeq \ZZ_2 \times \ZZ_2$ 
is its subgroup generated by the reflections through $l_1$ and $l_3$.
 \begin{figure}[htb]
 \includegraphics[width=7cm]{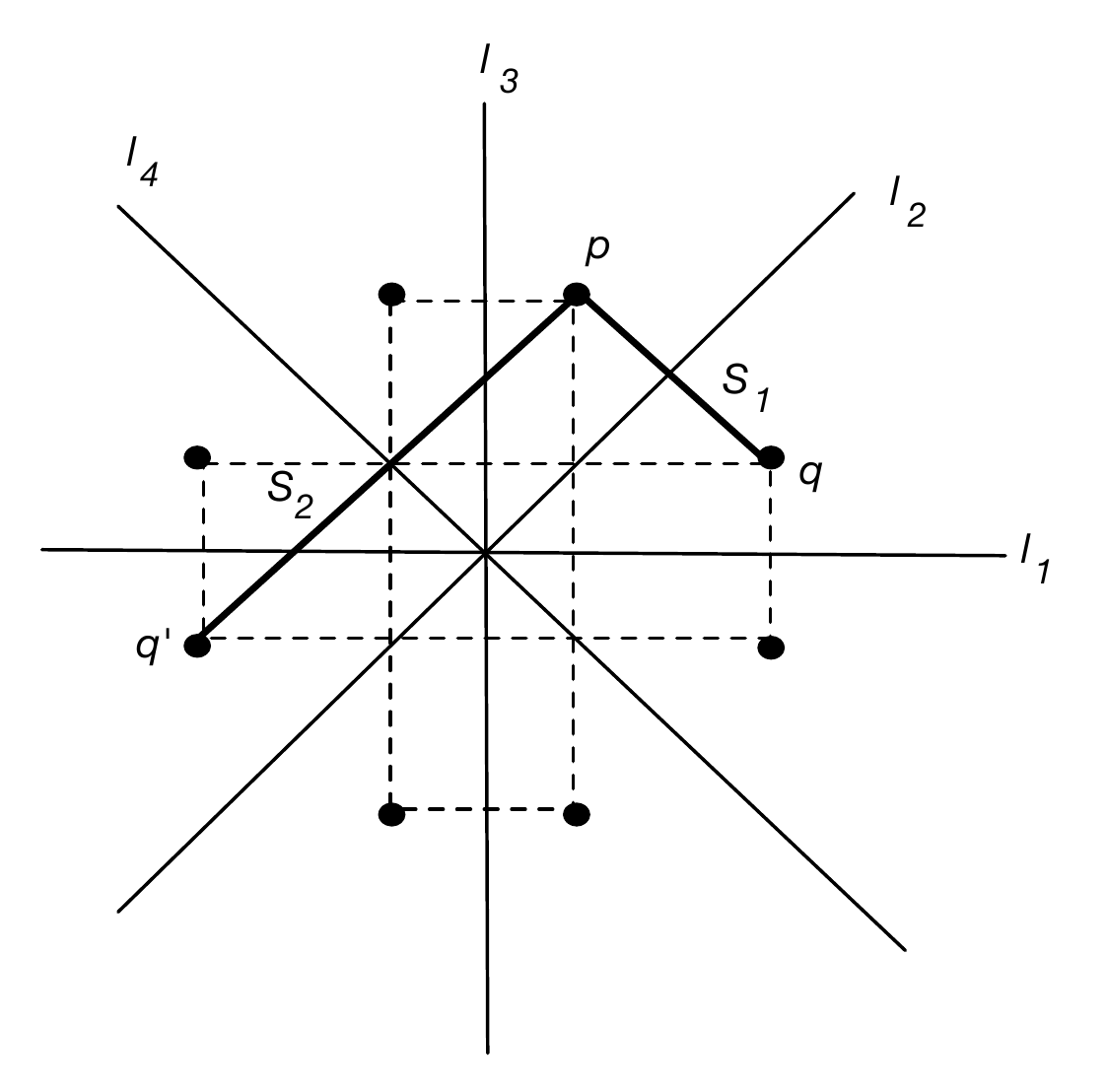}
 \caption{}
 \label{fig4}
\end{figure}

The corners of the two rectangles in the diagram lie respectively in the same $\Sp(1)\times \Sp(1)$-orbit.
That is, the set $\{Gx \mid  x\in M^T\}$ has two elements, which are $Gp$ and $Gq$.
There are sixteen $T$-invariant spheres $S_i$ in $\Sp(2)/T$, which join pairs of the type $x, sx$, where
$x$ is one the eight $T$-fixed points and $s$ one of the reflections in $W(\Sp(2))$.
 However, the set 
$\{G S_i \mid 1\le i \le 16\}$ has only two elements, which are induced by the
spheres $S_1$ and $S_2$ represented in the diagram.  The intersection of $GS_1$ with $GS_2$
is the union $Gp\cup Gq$.  

The GKM graph therefore has two line segments, induced by $GS_1$ and $GS_2$. 
We have $S_1\cap \Mmax=\{p, q\}$ and $S_2\cap \Mmax =\{p, q'\}$. 
As representatives of the two orbits $Gp$ and $Gq$ we choose $p$ and $q$.
The $G$-isotropy groups of $p, q$, and $q'$ are $T$. Recall that $T$ is
by definition a direct product $S^1 \times S^1$. One can see that the principal $T$-isotropy groups along
the spheres $S_1$ and $S_2$ are $\Delta:=\{ (z, z) \mid z\in S^1\}$, respectively
$\Delta':=\{(z, z^{-1}) \mid z\in S^1\}$. The Lie algebra
$\mft$ is the direct sum of  two copies of $\RR$, which is the Lie algebra of $S^1$.
In this way we obtain a system of coordinates $(t_1, t_2)$ on $\mft$, such that
the lines $l_1, l_2, l_3,$ and $l_4$ are described by the equations
$t_2=0$, $t_1-t_2=0$, $t_1=0$, respectively $t_1+t_2=0$.  
We have $Gq'=Gq$, so the only thing we still need in order to complete the graph is
to find $g_0\in G$ such that $q'=g_0q$ and then determine the differential of the map
$c_{g_0}: T\to T$. In fact, $g_0$ is in the $G$-normalizer of $T$ and the differential
of $c_{g_0}$ at $e$ is just the composition $s_1s_3$ (where $s_i$ is the reflection through $l_i$), that is, the
map $(t_1, t_2)\mapsto (-t_1, -t_2)$. The GKM graph is given in Figure \ref{fig5}. 

\begin{figure}[htb]
 \includegraphics[width=5cm]{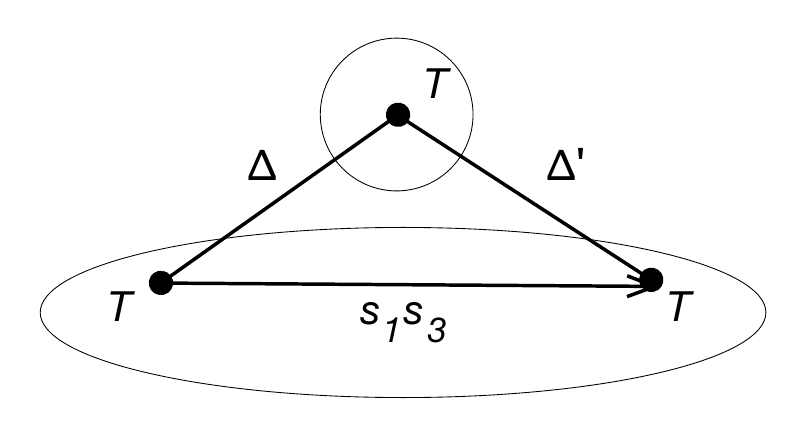}
 \caption{}
 \label{fig5}
\end{figure}

Thus  $H^*_{\Sp(1)\times \Sp(1)}(\Sp(2)/T)$ is the subalgebra of $\RR[t_1, t_2]\oplus \RR[t_1, t_2]$
consisting of all pairs $(f_1, f_2)$ with the following two properties:
\begin{itemize}
\item $f_1(t_1, t_2) -f_2(t_1, t_2)$ is divisible by $t_1-t_2$
\item $f_1(t_1, t_2)-f_2(-t_1,-t_2)$ is divisible by $t_1+t_2$
\end{itemize}
\end{ex}

\begin{ex}\label{u2} Let us consider again the $\U(2)$-action on $\HH P^1$ which has been defined in Example \ref{ex:hp1}. 
One can distinguish two $T$-invariant spheres in $\HH P^1$ between the two $T$-fixed points 
$[1:0]$ and $[0:1]$. To describe them, we consider the splitting $\HH = \CC + \CC j$.
The two spheres  are 
\begin{align*}{}&S_1:=\{[z_1:z_2] \mid z_1, z_2 \ {\rm are \ in \ } \CC, {\rm \ not \ simultaneously \ zero}\},\\
{}&S_2:= \{[z_1: z_2j] \mid z_1, z_2 \ {\rm are \ in \ } \CC, {\rm \ not \ simultaneously \ zero}\}.\end{align*}
Note that $S_1$ is in fact  the $\U(2)$-orbit of $[1:0]$.
The tangent spaces to $S_1$ and $S_2$ at $[1:0]$ are the direct summands of  the splitting of $T_{[1:0]}\HH P^1$
into irreducible $T$-spaces;
 the corresponding  representations of $T$ are given by
 \begin{align*}
{}& (z_1, z_2).v = z_1z_2v, \quad z_1, z_2\in S^1, v\in \CC\simeq T_{[1;0]}S_1\\
{}& (z_1, z_2).v= z_1z_2^{-1}v,  \quad z_1, z_2\in S^1, v\in \CC\simeq T_{[1;0]}S_2.
\end{align*}
We consider the geodesic $\gamma(t):=[\cos t; (\sin t)j]$, $0\le t \le \pi/2$, which joins
$[1:0]$ with $[0:1]$, is $\U(2)$-transversal, and is contained in $S_2$.
For small $t>0$, the $\U(2)$-isotropy group at $\gamma(t)$ is the subgroup $\Delta'$ of $T$ consisting 
of all $(z, z^{-1})$, with $z\in \CC$, $|z|=1$. 
To construct the GKM graph, we only need to observe that $\U(2)S_1=\U(2).[1:0]$,
which  is just an orbit; hence the graph has just one line segment, which corresponds to
$\U(2)S_2$.
The $\U(2)$-isotropy group at the midpoint $\gamma(\pi/4)$
is equal to $\SU(2)$, canonically embedded in $\U(2)$. 
Hence $S_2$ fits into the situation described by Lemma \ref{gsi} (b),
which means that $\U(2)S_2$
is a  component of type (II) of the non-abelian 1-skeleton. (In fact, we have $\U(2)S_2=\HH P^1$, so it is the only component of the non-abelian 1-skeleton.)
The non-abelian GKM graph is presented in  Figure \ref{fig6}.
 
 \begin{figure}[htb]
 \includegraphics[width=6cm]{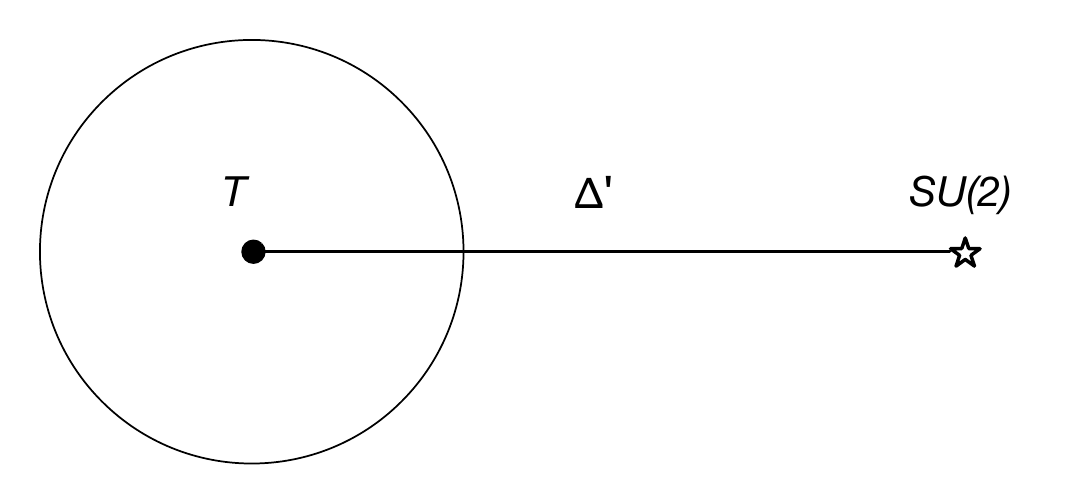}
 \caption{}
 \label{fig6}
\end{figure}

A simpler way to deduce this is by simply observing  that the action we investigate here 
is the cohomogeneity-one action corresponding to the group diagram
$\U(2)\supset T, \SU(2) \supset \Delta'$, see Remark  \ref{coh1} below, especially
equation (\ref{cohco}).

Let us now calculate explicitly $H^*_{\U(2)}(\HH P^1)$. To this end, we choose the coordinates
$(t_1, t_2)$ on the Lie algebra $\mft$ of $T$ which arise naturally from writing $T=S^1\times S^1$.
The Lie algebra of $\Delta'$ is isomorphic to $\RR$, which is embedded in
$\mft$ via $x\mapsto (x, -x)$. The same $\RR$ is embedded into the Lie algebra $\mfs\mfu(2)$ of
$\SU(2)$ via $x\mapsto {\rm Diag}(x, -x)$. Its image in $\mfs\mfu(2)$ is actually the Lie algebra of the standard
maximal torus in $\SU(2)$. Thus $H^*(B\SU(2))=S(\mfs \mfu(2)^*)^{\SU(2)}$ consists of all
$f\in \RR[t]$ which are fixed by the Weyl group of $\SU(2)$. But this group is isomorphic to
$\ZZ_2$, which acts on $\RR$ via $x\mapsto -x$ and consequently on $\RR[t]$ via $f(t)\mapsto f(-t)$.
In conclusion, the abovementioned cohomology algebra consists of all $f(t_1, t_2) \in \RR[t_1, t_2]$ with the property that
 $f(t, -t)=f(-t, t)$; equivalently, 
 $$H^*_{\U(2)}(\HH P^1)\simeq \{f\in \RR[t_1, t_2] \mid f(t_1, t_2)-f(t_2, t_1) {\rm \   is \  divisible \  by} \ t_1+t_2\}.$$ 
 \end{ex}

\begin{rem}\label{coh1} The group action in the previous example is of cohomogeneity-one. 
Consider the general situation when a compact connected Lie group $G$ acts  on a compact connected manifold $M$ with cohomogeneity equal to one, such that $M/G$ is homeomorphic to a closed interval. 
Let $G\supset K_+,K_-\supset H$ be the associated group diagram, see,
e.g., \cite[Section1]{Z}. Assume that the group action satisfies the non-abelian GKM conditions. The condition of being equivariantly formal is equivalent to at least one of the groups $K_+,K_-$ having rank equal to the rank of $G$, see \cite[Corollary 1.3]{GM}. 
As the 1-skeleton coincides with the whole $M$, the GKM graph consists of only one line segment, which either connects two dots (if both $K_+$ and $K_-$ are of maximal rank) or a dot and a star (if only one of $K_+$ and $K_-$ is of maximal rank). In the first case, we have 
\begin{equation}\label{cohcoh}
H^*_G(M) = \{(f,g)\in S(\mfk_+^*)^{K_+} \oplus S(\mfk_-^*)^{K_-}\mid f|_{\mfh} = g|_{\mfh}\},
\end{equation}
and in the second (assume that $K_+$ is of maximal rank) we have
\begin{equation}\label{cohco}
H^*_G(M) = \{f\in S(\mfk_+^*)^{K_+}\mid f|_{\mfh} = g|_{\mfh} \text{ for some } g\in S(\mfk_-^*)^{K_-}\}.
\end{equation}
Note that equation \eqref{cohcoh} holds true for any cohomogeneity-one action which satisfies $\rk G = \rk K_+ = \rk K_- = \rk H + 1$, even if  the
GKM conditions are not satisfied, see \cite[Corollary 4.2]{GM}. For example,
consider the $\SO(3)$-action on the sphere $S^4$ which is induced by the group diagram 
$\SO(3) \supset {\rm S(O}(2)\times {\rm O}(1)), {\rm S(O}(1) \times {\rm O}(2)) \supset \ZZ_2\times \ZZ_2$, 
see, e.g., \cite[Section 2]{Z}; 
the third GKM condition is obviously not satisfied by this action. Similarly,  \eqref{cohco} holds true whenever $\rk G = \rk K_+=\rk K_-+1 = \rk H + 1$,  (to perceive this, combine \cite[Corollary 4.2]{GM} with the injectivity of $H^*_G(M)\to H^*_G(G/K^+)$). 
\end{rem}

\begin{ex}\label{typecc} We now consider again the action of $K$ on $M$ which is described in Example \ref{typec}. First, all $T$-fixed points are on the same $K$-orbit,
which means that $\Mmax$ consists of exactly one component.  
The locus $M_1$ of all points in $M$ of isotropy corank at least 1 consists
only of $KS$. Indeed, the 1-skeleton of the $T$-action on $M$ is the union of
fifteen 2-spheres among which only six have the $K$-orbit different from the
$K$-orbit of a point. Moreover, the $K$-orbit of any of those six spheres is equal to $KS$:
the reason is that the two $T$-fixed points of such a sphere can be mapped to $p$, respectively
$q$ by an element of $W(K)$; we then use \cite{GHZ}, namely the last paragraph in Section 2.2.5 
and Theorem 2.5. 
To construct the graph, we only need to perform Step 3 once.
The $K$-stabilizers of $p$ and $q$ are both equal to $T$. 
Also note that  $S$ is a component of the fixed point set of the subtorus
$T_\alpha$ of $T$ whose Lie algebra is $\alpha^\perp$. Finally, observe that
$q=s_{\alpha_2} \circ s_{\alpha_1}p$.  The GKM graph is presented in the figure below.
\begin{figure}[htb]
 \includegraphics[width=4.5cm]{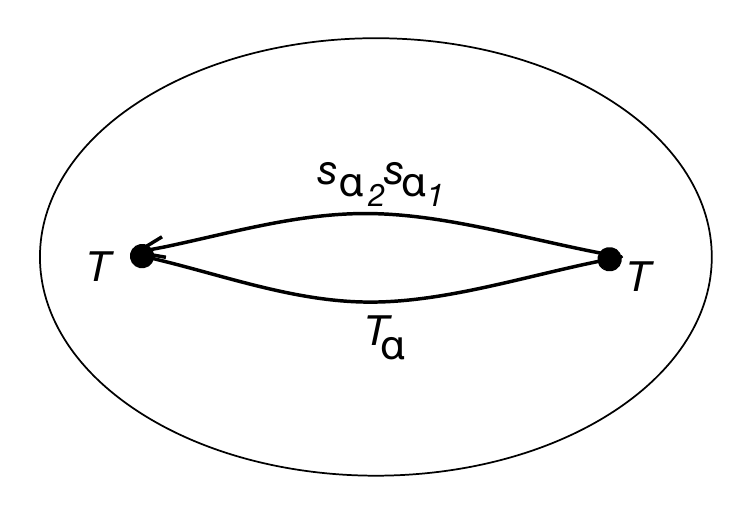}
\caption{}
 \label{fig7}
\end{figure}

We obtain the following presentation:
$$H^*_K(M)\simeq \{f\in S(\mft^*) \mid 
f-f\circ s_{\alpha_1}\circ s_{\alpha_2}{\rm \ 
is \  divisible \  by  \ } \alpha\}.$$
\end{ex}

\begin{rem}\label{simpler} 
In the general context of a non-abelian GKM action of $G$ on $M$, 
one can also compute
the 
corresponding $G$-equivariant cohomology algebra by taking a maximal
torus $T\subset G$ along with the corresponding Weyl group $W(G)$
and using the formula $H^*_G(M)=H^*_T(M)^{W(G)}$.
The result stated in Lemma \ref{ift} is crucial here: 
by the classical GKM theorem, see, e.g., Theorem \ref{thm:gkmclassical},  the algebra $H^*_T(M)$
can be expressed as a subalgebra of $\oplus_{p\in M^T}S(\mft^*)$, which we 
refer to as the GKM algebra. 
The Weyl group $W(G)$ acts canonically on $M^T$ and the
action of $W(G)$ on the GKM algebra is given by
$$w.(f_p)_{p\in M^T}=(g_p)_{p\in M^T}, \ {\rm where} \ 
g_p=f_{w^{-1}p}\circ w^{-1} \ {\rm for \ all \ } p\in M^T.$$
 We conclude that $H^*_G(M)$ consists of all $(f_p)_{p\in M^T}$
 in the GKM algebra with the property that 
 \begin{equation}\label{fwp} f_{wp}=f_p\circ w^{-1},\end{equation} for all
 $p\in M^T$ and all $w\in W(G)$. It is interesting to compare this presentation
 with the one obtained by using our method. For instance, the equivalence is easy to establish in Examples \ref{sp22} and \ref{u2} and we leave it as an exercise for the reader.  Things seem to be different in Example \ref{typecc}: it is not immediately
 visible that fifteen
 divisibility relations involving six polynomials $f_p$, combined with
   the $W(K)$-invariance conditions (\ref{fwp})  lead to just one divisibility relation.
      \end{rem}


\begin{thebibliography}{AAA}


\bibitem{allday}
C.~Allday and V.~Puppe, {\em Cohomological methods in transformation groups}, Cambridge Studies in Advanced Mathematics, 32. Cambridge University Press, Cambridge, 1993.

\bibitem{Bredon} G.~E.~Bredon, {\it The free part of a torus action and related numerical equalities}, Duke Math.~J.~{\bf 41} (1974), 843-854.



\bibitem{CS} T.~Chang and T.~Skjelbred, {\it The topological Schur lemma and related results,}
Ann.~Math. {\bf 100} (1974), 307-321.

\bibitem{FranzPuppe2003} M.~Franz and V.~Puppe, {\it Exact sequences for equivariantly formal spaces}, C. R. Math. Acad. Sci. Soc. R. Can. {\bf 33} (2011), no.~1, 1-10.



\bibitem{GM} O.~Goertsches and A.-L.~Mare, {\em Equivariant  cohomology of cohomogeneity-one actions}, preprint, \texttt{arXiv:1110.6310}.

\bibitem{GNT} O.~Goertsches, H.~Nozawa, and D.~T\"oben, {\em Equivariant cohomology of $K$-contact manifolds}, to appear in Math.\ Ann., available at {\texttt{arXiv:1102.4476}}.

\bibitem{GR} O.~Goertsches and S.~Rollenske, {\em Torsion in equivariant cohomology
  and Cohen-Macaulay actions}, Transform.~Groups.~{\bf 16} (2011), no.~4, 1063-1080.



\bibitem{GKM} M.~Goresky, R.~Kottwitz, and R.~MacPherson, {\em Equivariant cohomology, Koszul duality, and the localization theorem}, Invent.~Math.~{\bf 131} (1998), no.~1, 25--83.

\bibitem{GGK} V.~W.~Guillemin, V.~L.~Ginzburg, and Y.~Karshon, {\it Moment maps, cobordisms, and Hamiltonian
group actions}, Mathematical Surveys and Monographs, 96. AMS, Providence, RI,
2002.

\bibitem{GH} V.~W.~Guillemin and T.~Holm, {\it GKM theory for torus actions with nonisolated fixed points},
Int.~Math.~Res.~Not.~2004, no.~40, 2105-2124. 

\bibitem{GHZ} V.~W.~Guillemin, T.~Holm, and C.~Zara, {\it
A GKM description of the equivariant cohomology ring of a homogeneous space}, J.~Algebraic Combin.~{\bf 23} (2006), no.~1, 21-41. 





\bibitem{GuilleminSternberg} V.~W.~Guillemin and S.~Sternberg, {\it Supersymmetry and equivariant de Rham theory}, Springer-Verlag, Berlin 1999.

\bibitem{Gu-Za1} V.~W.~Guillemin and C.~Zara, {\it Equivariant de Rham theory and graphs}, Asian J.~Math., {\bf 3} (1999), 49-76.



\bibitem{Gu-Za2} V.~W.~Guillemin and C.~Zara, {\it One-skeleta, Betti numbers, and 
equivariant cohomology}, Duke Math.~J., {\bf 107} (2001), 283-349.

\bibitem{Ha-He-Ho}  M.~Harada, A.~Henriques, and T.~Holm,
{\it Computation of generalized equivariant cohomologies of Kac-Moody flag varieties},
Adv.~Math.~{\bf 197} (2005), 198-221.



\bibitem{Hs} W.-Y.~Hsiang, {\it Cohomology Theory of Topological Transformation Groups}, Ergebnisse der Mathematik und ihrer Grenzgebiete, Vol.~85, Springer-Verlag, New-York 1975.

\bibitem{M} A.-L.~Mare, {\it Equivariant cohomology of quaternionic flag manifolds}, 
J.~Algebra {\bf 319} (2008), no.~7, 2830-2844.

\bibitem{MW} A.-L.~Mare and M.~Willems, {\it Topology of the octonionic flag manifold}, preprint, {\texttt{arXiv:0809.4318}}.

\bibitem{Mo} P.~S.~Mostert, {\it On a compact Lie group acting on a manifold},
Ann.~Math.~{\bf 65} (1957), no.~3, 447-455.

\bibitem{S} J.-P.~Serre, {\it Alg\`ebre locale. Multiplicit\'es, 3e \'ed.}, Lecture Notes in Mathematics, Vol.~11, Springer-Verlag, 
Berlin 1975.

\bibitem{Z} W.~Ziller, {\it On the geometry of cohomogeneity one manifolds with positive curvature}, Riemannian topology and geometric structures on manifolds, 233-262, Progr.~Math., Vol.~271, Birkh\"auser Boston, Boston, MA, 2009.

\end{thebibliography}
\end{document}